\newtheoremstyle{dotless}{}{}{\itshape}{}{\bfseries}{}{}{}
\theoremstyle{dotless}
\theoremstyle{plain}
\newtheorem{thm}{Theorem}[section]
\newtheorem{prop}[thm]{Proposition}
\newtheorem{cor}[thm]{Corollary}
\theoremstyle{definition}
\newtheorem{defn}[thm]{Definition}
\newtheorem{rem}[thm]{Remark}
\newtheorem{exa}[thm]{Example}
\newtheorem{ass}[thm]{Assumption}
\newcommand{\N} {\mathbb{N}}
\newcommand{\R} {\mathbb{R}}
\newcommand{\C} {\mathbb{C}}
\DeclareMathOperator{\id}{id}
\DeclareMathOperator{\re}{Re}
\providecommand{\differential}{\mathrm{d}}
\renewcommand{\d}{\differential}
\newcommand\rlim{
\mathchoice{\vcenter{\hbox{${\scriptstyle{+}}$}}}
{\vcenter{\hbox{$\scriptstyle{+}$}}}
{\vcenter{\hbox{$\scriptscriptstyle{+}$}}}
{\vcenter{\hbox{$\scriptscriptstyle{+}$}}}}
\newcommand{\vertiii}[1]{{\left\vert\kern-0.25ex\left\vert\kern-0.25ex\left\vert #1 
    \right\vert\kern-0.25ex\right\vert\kern-0.25ex\right\vert}}
\begin{document}

\title[On equicontinuity and tightness of bi-continuous semigroups]{On equicontinuity and tightness of bi-continuous semigroups}
\author[K.~Kruse]{Karsten Kruse}
\thanks{K.~Kruse acknowledges the support by the Deutsche Forschungsgemeinschaft (DFG) within the Research Training
 Group GRK 2583 ``Modeling, Simulation and Optimization of Fluid Dynamic Applications''.}
\address{(KK) Hamburg University of Technology, Institute of Mathematics,\newline
Am Schwarzenberg-Campus~3,
21073 Hamburg,
Germany}
\email{karsten.kruse@tuhh.de}
\author[F.L.~Schwenninger]{Felix L. Schwenninger}
\address{(FLS) University of Twente, Department of Applied Mathematics, P.O. Box 217, 7500 AE Enschede, The Netherlands, and  University of Hamburg, Center for Optimization and Approximation, Bundesstr.~55, 20146 Hamburg, Germany}
\email{f.l.schwenninger@utwente.nl}

\subjclass[2010]{47D06, 46A70, 54D55} 

\keywords{bi-continuous semigroup, tight, equicontinuous, mixed topology, C-sequential} 

\date{\today}
\begin{abstract}
In contrast to classical strongly continuous semigroups, the study of bi-continuous semigroups comes with some freedom in the properties of the associated locally convex topology. This paper aims to give minimal assumptions in order to recover typical features like tightness and equicontinuity with respect to the mixed topology as well as to carefully clarify on mutual relations between previously studied variants of these notions. The abstract results ---exploiting techniques from topological vector spaces---  are thoroughly discussed by means of several example classes, such as semigroups on spaces of bounded continuous functions. 
\end{abstract}
\maketitle

\section{Introduction}
Strongly continuous semigroups of operators are a well-established framework in the study of evolution equations.
In many applications, however, the semigroups are not strongly continuous ($C_{0}$)
with respect to the norm $\|\cdot\|$ of the underlying Banach space but strongly continuous with respect to a weaker 
Hausdorff locally convex topology $\tau$. Typical examples emerge from the fact that norm-strong continuity is in general not preserved when dual semigroups are considered \cite{vanneerven1992}. Further examples are implemented semigroups 
\cite[Sect.~3.2]{alber2001,bratelli1987}, transition semigroups on the space $\mathrm{C}_{\operatorname{b}}(\Omega)$ of bounded continuous functions on 
e.g.~a Polish space $\Omega$ like the Ornstein--Uhlenbeck semigroup 
\cite{daprato2014,farkas2004,kunze2009} and even on the space of bounded H\"older continuous functions on a 
separable Hilbert space \cite{es_sarhir2006}. In addition, also certain
Koopman semigroups on $\mathrm{C}_{\operatorname{b}}(\Omega)$, i.e.~semigroups induced by a
semiflow on a completely regular Hausdorff space $\Omega$ \cite{dorroh1993,farkas2020}, require such a relaxed notion. 

All these examples belong to the general framework of $\tau$-bi-continuous semigroups, which are $\tau$-strongly continuous and locally bi-equicontinuous, and were first studied by K\"uhnemund in 
\cite{kuehnemund2001,kuehnemund2003}.
On the other hand, %the class of bi-continuous semigroups, 
the theory of $C_{0}$-semigroups was extended beyond Banach spaces to  
equicontinuous $C_{0}$-semigroups on Hausdorff locally convex spaces 
\cite[Chap.~IX]{albanese2016,komatsu1964,yosida1968}, 
quasi-equicontinuous $C_{0}$-semigroups 
\cite{albanese2016,babalola1974,choe1985,miyadera1959,moore1971a,moore1971b}, 
locally equicontinuous $C_{0}$-semigroups \cite{dembart1974,komura1968,ouchi1973} 
and sequentially (locally) equicontinuous semigroups \cite{federico2020}.
The corresponding notion of local bi-equicontinuity of a bi-continuous semigroup is weaker than local 
$\tau$-equicontinuity due to \cite[Examples 6 (a), p.~209-210]{kuehnemund2003}. 
Nevertheless, it was already observed in \cite[Theorem 7.4, p.~180]{kraaij2016} 
that $\tau$-bi-continuous semigroups are locally $\gamma$-equicontinuous 
under some mild assumptions on the mixed topology $\gamma:=\gamma(\|\cdot\|,\tau)$. 
The mixed topology $\gamma$ was introduced in \cite{wiweger1961} and is the finest 
Hausdorff locally convex topology ---even finest linear topology--- 
that coincides with $\tau$ on $\|\cdot\|$-bounded sets. 

In the context of $\tau$-bi-continuous semigroups another notion related to local $\gamma$-equicontinuity 
emerged, namely, the notion of locally equitight $\tau$-bi-continuous semigroups 
(or more generally of families of operators), 
which are sometimes simply referred to as ``tight'' or ``local'' in \cite{es_sarhir2006, farkas2003}.
Equicontinuity and local equicontinuity are crucial ingredients for perturbation results for 
$C_{0}$-semigroups on Hausdorff locally convex spaces like dissipative perturbations 
or Desch--Schappacher perturbations \cite{albanese2016,jacob2015}.
Local equitightness plays a similar role in perturbation theory for bi-continuous semigroups, 
see \cite[Theorem 1.2, p.~669]{es_sarhir2006} (cf.~\cite[Theorem 3.2.3, p.~47]{farkas2003}), 
\cite[Theorems 2.4, 3.2, p.~92, 94-95]{farkas2004}, \cite[Remark 4.1, p.~101]{farkas2004}, 
\cite[Theorem 5, p.~8]{budde2021} (cf.~\cite[Theorem 5.19, p.~81]{budde2019a})
and \cite[Theorem 3.3, p.~582]{budde2021a}. Equitightness is also relevant in ergodic theory for 
$\tau$-bi-continuous semigroups, see \cite[Remark 3.5 (ii), p.~147, Proposition 3.8, p.~150]{albanese2011}. 

In this paper we study $\gamma$-equicontinuity and equitightness for $\tau$-bi-continuous semigroups 
and their relation. The space $(X,\gamma)$ is usually neither barrelled nor bornological, 
since this would imply that $\tau$ coincides 
with the $\|\cdot\|$-topology by \cite[I.1.15 Proposition, p.~12]{cooper1978}. 
Thus automatic local equicontinuity results for strongly continuous semigroups 
like \cite[Proposition 1.1, p.~259]{komura1968} are not applicable. In 
\cite[Corollary 3.10, p.~553]{kunze2009}, \cite[Theorem 4.10, p.~555-556]{kunze2009}
and \cite[Proposition 3.4, p.~161]{kraaij2016} automatic local equicontinuity 
results for strongly continuous semigroups were extended to strong Mackey spaces. 
In particular, barrelled or sequentially complete bornological Hausdorff locally convex spaces 
belong to the class of strong Mackey spaces by \cite[Proposition 3.3 (a), (b), p.~160-161]{kraaij2016}, 
which is not helpful for $(X,\gamma)$.
We take a different route as $(X,\gamma)$ being a strong Mackey space is quite a strong assumption. 
This route is more in the spirit of 
\cite[Proposition 3.3 (c), p.~160-161]{kraaij2016} and \cite[Theorem 7.4, p.~180]{kraaij2016}, 
exploiting the assumption that $(X,\gamma)$ is a C-sequential space, which means that every convex 
sequentially open subset of $(X,\gamma)$ is already open. 

After fixing some notions and preliminaries on $\tau$-bi-continuous semigroups and the mixed topology $\gamma$ 
in \prettyref{sect:notions}, 
we derive sufficient conditions on the interplay of $\|\cdot\|$, $\tau$ and $\gamma$ that imply 
the quasi-$\gamma$-equicontinuity of a $\tau$-bi-continuous semigroup in \prettyref{sect:continuity}. 
Moreover, we deduce sufficient conditions that guarantee 
the equivalence between (local, quasi-)$\gamma$-equicontinuity and (local, quasi-)equitightness,   \prettyref{thm:mixed_equicont_main} in combination with \prettyref{prop:mixed_equicont}.
It turns out that these conditions are satisfied
for most of the classical examples of $\tau$-bi-continuous semigroups we mentioned before. Indeed, these applications are covered by the results in 
\prettyref{sect:applications}.

\section{Notions and preliminaries}
\label{sect:notions}

For a continuous map $f\colon (X_{1},\tau_{1})\to (X_{2},\tau_{2})$ from a topological space $(X_{1},\tau_{1})$ to 
a topological space $(X_{2},\tau_{2})$ we typically write that it is $\tau_{1}$-$\tau_{2}$-continuous and, 
if $(X_{2},\tau_{2})=(X_{1},\tau_{1})$, we just write that it is $\tau_{1}$-continuous. 
For two topologies $\tau_{1}$ and $\tau_{2}$ on a space $X$, we write $\tau_{1}\leq\tau_{2}$ if the topology 
$\tau_{1}$ is coarser than $\tau_{2}$. 
For a vector space $X$ over the field $\R$ or $\C$ with a Hausdorff locally convex topology $\tau$ 
we denote by $(X,\tau)'$ the topological linear dual space and just write $X':=(X,\tau)'$ 
if $(X,\tau)$ is a Banach space.
We use the symbol $\mathcal{L}(X;Y):=\mathcal{L}((X,\|\cdot\|_{X});(Y,\|\cdot\|_{Y}))$ 
for the space of continuous linear operators from a Banach space $(X,\|\cdot\|_{X})$ 
to a Banach space $(Y,\|\cdot\|_{Y})$ and denote by $\|\cdot\|_{\mathcal{L}(X;Y)}$ the operator norm on 
$\mathcal{L}(X;Y)$. If $X=Y$, we set $\mathcal{L}(X):=\mathcal{L}(X;X)$.

\begin{ass}[{\cite[Assumptions 1, p.~206]{kuehnemund2003}}]\label{ass:standard}
Let $(X,\|\cdot\|,\tau)$ be a triple where $(X,\|\cdot\|)$ is a Banach space, and 
\begin{enumerate}
\item[(i)] $\tau$ is a coarser Hausdorff locally convex topology than the $\|\cdot\|$-topology,
\item[(ii)] $\tau$ is sequentially complete on $\|\cdot\|$-bounded sets, 
i.e.~every $\|\cdot\|$-bounded $\tau$-Cauchy sequence is $\tau$-convergent, 
\item[(iii)] the dual space $(X,\tau)'$ is \emph{norming}, i.e. 
\[
 \|x\|=\sup_{\substack{y\in(X,\tau)'\\ \|y\|_{X'}\leq 1}}|y(x)|,\quad x\in X.
\]
\end{enumerate}
\end{ass} 

For what follows, the mixed topology, \cite[Section 2.1]{wiweger1961}, and the notion of a Saks space 
\cite[I.3.2 Definition, p.~27-28]{cooper1978} will be crucial.

\begin{defn}\label{defn:mixed_top_Saks}
Let $(X,\|\cdot\|)$ be a Banach space and $\tau$ a Hausdorff locally convex topology on $X$ that is coarser 
than the $\|\cdot\|$-topology $\tau_{\|\cdot\|}$.  Then
\begin{enumerate}
	\item[(a)]  the \emph{mixed topology} $\gamma := \gamma(\|\cdot\|,\tau)$ is
	the finest linear topology on $X$ that coincides with $\tau$ on $\|\cdot\|$-bounded sets and such that 
	$\tau\leq \gamma \leq \tau_{\|\cdot\|}$; 
    \item[(b)] the triple $(X,\|\cdot\|,\tau)$ is called a \emph{Saks space} if there exists a directed system 
    of seminorms $\mathcal{P}_{\tau}$ that generates the topology $\tau$ such that
\begin{equation}\label{eq:saks}
\|x\|=\sup_{p\in\mathcal{P}_{\tau}} p(x), \quad x\in X.
\end{equation}
\end{enumerate}
\end{defn}

In the next remark we collect some observations from \cite[Remark 5.3, p.~2680]{kruse_meichnser_seifert2018},
\cite[Lemma 5.5 (b), p.~2681]{kruse_meichnser_seifert2018} and \cite[Remark 5.2, p.~338]{budde2019} 
(see \cite[Lemma 4.4, p.~163]{kraaij2016} as well) concerning the previous assumptions.

\begin{rem}\label{rem:mixed_top}
\begin{enumerate}
\item[(a)] The mixed topology is Hausdorff locally convex and our definition is equivalent to the one from the 
literature \cite[Section 2.1]{wiweger1961} due to \cite[Lemmas 2.2.1, 2.2.2, p.~51]{wiweger1961}.
\item[(b)] Let $\tau$ be a Hausdorff locally convex topology on $X$ that is coarser than the $\|\cdot\|$-topology 
and $(X,\tau)'$ norming. Then the sequential completeness of $\tau$ on $\|\cdot\|$-bounded sets is 
equivalent to the sequential completeness of $(X,\gamma)$.
\item[(c)] The existence of a system of seminorms $\mathcal{P}_{\tau}$ generating the topology $\tau$ 
and satisfying \eqref{eq:saks} is equivalent to the property that $(X,\tau)'$ is norming. 
We note however  that not every system of seminorms that induces the topology $\tau$ need to
fulfil \prettyref{eq:saks} even if $(X,\tau)'$ is norming (see \cite[Example B), p.~65]{wiweger1961}).
\item[(d)] \prettyref{ass:standard} (iii) may be weakened to 
\[
 \|x\|=\sup_{y\in \Phi(\tau)}|y(x)|,\quad x\in X,
\]
where $\Phi(\tau)$ is the set all linear functionals $y\in X'$ with $\|y\|_{X'}\leq 1$ 
whose restriction to the unit ball $B_{\|\cdot\|}:=\{x\in X\;|\; \|x\|\leq 1\}$ is $\tau$-sequentially continuous 
by \cite[Remarks 2.2, 2.4, p.~4]{kunstmann2020}. However, no concrete example is known where this is strictly weaker 
than \prettyref{ass:standard} (iii).
\item[(e)] With the previously stated facts in this remark, we can rephrase \prettyref{ass:standard} 
by saying that $(X,\|\cdot\|,\tau)$ is a Saks space such that $(X,\gamma)$ is sequentially complete.
\end{enumerate}
\end{rem}

\begin{exa}\label{ex:mixed_top}
\begin{enumerate}
\item[(a)] Let $\Omega$ be a \emph{completely regular} Hausdorff space. 
We recall that a topological space $\Omega$ is called completely regular 
if for any non-empty closed subset $A\subset\Omega$ and 
$x\in\Omega\setminus A$ there is a continuous function $f\colon\Omega\to[0,1]$ 
such that $f(x)=0$ and $f(z)=1$ for all $z\in A$ (see \cite[Definition 11.1, p.~180]{james1999}). 
Let $\mathrm{C}_{\operatorname{b}}(\Omega)$ be the space of bounded real- or complex-valued continuous functions 
on $\Omega$ and  
\[
\|f\|_{\infty}:=\sup_{x\in\Omega}|f(x)|,\quad f\in \mathrm{C}_{\operatorname{b}}(\Omega).
\]
The compact-open topology, i.e.~the topology $\tau_{\operatorname{co}}$ of uniform convergence on compact subsets 
of $\Omega$, is induced by the directed system of seminorms $\mathcal{P}_{\tau_{\operatorname{co}}}$ given by 
\[
p_{K}(f):=\sup_{x\in K}|f(x)|,\quad f\in \mathrm{C}_{\operatorname{b}}(\Omega),
\]
for compact $K\subset \Omega$. 
Then $(\mathrm{C}_{\operatorname{b}}(\Omega),\|\cdot\|_{\infty},\tau_{\operatorname{co}})$ is a Saks space 
by \cite[Example D), p.~65-66]{wiweger1961}. 

Let $\mathcal{V}$ denote the set of all non-negative bounded functions $\nu$ on $\Omega$ 
that vanish at infinity, i.e.~for every $\varepsilon>0$ the set $\{x\in\Omega\;|\;\nu(x)\geq\varepsilon\}$ is compact. 
Let $\beta_{0}$ be the Hausdorff locally convex topology on $\mathrm{C}_{\operatorname{b}}(\Omega)$ that is induced 
by the seminorms 
\[
|f|_{\nu}:=\sup_{x\in\Omega}|f(x)|\nu(x),\quad f\in\mathrm{C}_{\operatorname{b}}(\Omega),
\]
for $\nu\in\mathcal{V}$. Due to \cite[Theorem 2.4, p.~316]{sentilles1972} 
we have $\gamma(\|\cdot\|_{\infty},\tau_{\operatorname{co}})=\beta_{0}$. 
If $\Omega$ is locally compact, then $\mathcal{V}$ may be replaced by the functions 
in $\mathrm{C}_{0}(\Omega)$ that are non-negative by \cite[Theorem 2.3 (b), p.~316]{sentilles1972} where 
$\mathrm{C}_{0}(\Omega)$ is the space of real-valued continuous functions on $\Omega$ that vanish at infinity.
\item[(b)] Let $(X,\|\cdot\|)$ be a Banach space and recall
\[
\|y\|_{X'}=\sup_{x\in B_{\|\cdot\|}}|y(x)|,\quad y\in X'.
\]
The weak$^{\ast}$-topology $\sigma^{\ast}:=\sigma(X',X)$ is induced by the directed system of seminorms given by 
\[
p_{K}(y):=\sup_{x\in K}|y(x)|,\quad y\in X',
\]
for finite $K\subset  B_{\|\cdot\|}$. 

Then $(X',\|\cdot\|_{X'},\sigma^{\ast})$ is a Saks space and 
$\gamma(\|\cdot\|_{X'},\sigma^{\ast})=\tau_{\operatorname{c}}(X',X)$
by \cite[Example E), p.~66]{wiweger1961} where $\tau_{\operatorname{c}}(X',X)$ is the topology of uniform convergence 
on compact subsets of $X$.
\item[(c)] Let $(X,\|\cdot\|)$ be a Banach space.
The dual Mackey-topology $\mu^{\ast}:=\mu(X',X)$ is induced by the directed system of seminorms given by 
\[
p_{K}(y):=\sup_{x\in K}|y(x)|,\quad y\in X',
\]
for $\sigma(X,X')$-compact absolutely convex $K\subset X$. Since $(X',\mu^{\ast})'=X$ is norming, 
$(X',\|\cdot\|_{X'},\mu^{\ast})$ is a Saks space by \prettyref{rem:mixed_top} (c). 
Furthermore, we have $\gamma(\|\cdot\|_{X'},\mu^{\ast})=\mu^{\ast}$ by the second example 
in \cite[p.~593]{conradie2006} as $\gamma(\|\cdot\|_{X'},\mu^{\ast})$ is, in particular, the finest 
Hausdorff locally convex topology that coincides with $\mu^{\ast}$ on $\|\cdot\|_{X'}$-bounded sets
by \prettyref{rem:mixed_top} (a).
\item[(d)] Let $(X,\|\cdot\|_{X})$ and $(Y,\|\cdot\|_{Y})$ be Banach spaces and recall
\[
\|R\|_{\mathcal{L}(X;Y)}=\sup_{x\in B_{\|\cdot\|_{X}}}\|Rx\|_{Y},\quad R\in \mathcal{L}(X;Y).
\]
The weak operator topology $\tau_{\operatorname{wot}}$ on $\mathcal{L}(X;Y)$ is 
induced by the directed system of seminorms given by 
\[
p_{N,M}(R):=\sup_{x\in N, y'\in M}|y'(Rx)|,\quad R\in \mathcal{L}(X;Y),
\]
for finite $N\subset X$ and finite $M\subset Y'$. 
The strong operator topology $\tau_{\operatorname{sot}}$ on $\mathcal{L}(X;Y)$ is induced by the directed system 
of seminorms given by 
\[
p_{N}(R):=\sup_{x\in N}\|Rx\|_{Y},\quad R\in \mathcal{L}(X;Y),
\]
for finite $N\subset X$. Due to \cite[p.~75]{kuehnemund2001} and \cite[VI.1.4 Theorem, p.~477]{dunford1958} 
$(\mathcal{L}(X;Y),\tau_{\operatorname{wot}})'=(\mathcal{L}(X;Y),\tau_{\operatorname{sot}})'$ is 
norming for $(\mathcal{L}(X;Y),\|\cdot\|_{\mathcal{L}(X;Y)})$ and thus 
$(\mathcal{L}(X;Y),\|\cdot\|_{\mathcal{L}(X;Y)},\tau_{\operatorname{wot}})$ and 
$(\mathcal{L}(X;Y),\|\cdot\|_{\mathcal{L}(X;Y)},\tau_{\operatorname{sot}})$ are Saks spaces 
by \prettyref{rem:mixed_top} (c). 
\end{enumerate}
\end{exa}

Concerning example (a), we note that examples of completely regular Hausdorff spaces 
are locally compact Hausdorff spaces by \cite[3.3.1 Theorem, p.~148]{engelking1989}, 
uniformisable, particularly metrisable, spaces 
by \cite[Proposition 11.5, p.~181]{james1999} and Hausdorff locally convex spaces 
by \cite[Proposition 3.27, p.~95]{fabian2011}.

\begin{defn}[{\cite[Definition 3, p.~207]{kuehnemund2003}}]\label{defn:bi_continuous}
Let $(X,\|\cdot\|,\tau)$ be a triple satisfying \prettyref{ass:standard}. 
A family $(T(t))_{t\geq 0}$ in $\mathcal{L}(X)$ is called a $\tau$\emph{-bi-continuous semigroup} if 
\begin{enumerate}
\item[(i)] $(T(t))_{t\geq 0}$ is a \emph{semigroup}, i.e.~$T(t+s)=T(t)T(s)$ and $T(0)=\id$ for all $t,s\geq 0$,
\item[(ii)] $(T(t))_{t\geq 0}$ is $\tau$\emph{-strongly continuous}, 
i.e.~the map $T_{x}\colon[0,\infty)\to(X,\tau)$, $T_{x}(t):=T(t)x$, is continuous for all $x\in X$, 
\item[(iii)] $(T(t))_{t\geq 0}$ is \emph{exponentially bounded} (of type $\omega$), i.e.~there exist $M\geq 1$ and 
$\omega\in\R$ such that $\|T(t)\|_{\mathcal{L}(X)}\leq Me^{\omega t}$ for all $t\geq 0$,
\item[(iv)] $(T(t))_{t\geq 0}$ is \emph{locally bi-equicontinuous}, 
i.e.~for every sequence $(x_n)_{n\in\N}$ in $X$, $x\in X$ 
with $\sup\limits_{n\in\N}\|x_n\|<\infty$ and $\tau\text{-}\lim\limits_{n\to\infty} x_n = x$ it holds that
     \[
      \tau\text{-}\lim_{n\to\infty} T(t)(x_n-x) = 0
     \]
locally uniformly for all $t\in [0,\infty)$.
\end{enumerate}
\end{defn}

For a $\tau$-bi–continuous semigroup $(T(t))_{t\geq 0}$ we call
\[
\omega_{0}:=\omega_{0}(T):=\inf\{\omega\in\R\;|\; \exists\; M\geq 1\;\forall\;t\geq 0:\;\|T(t)\|_{\mathcal{L}(X)}
                                 \leq Me^{\omega t}\}
\]
its \emph{growth bound} (see \cite[p.~7]{kuehnemund2001}).

\begin{rem}\label{rem:mixed_top_bi_cont}
Let $(X,\|\cdot\|)$ be a Banach space, $\tau$ a Hausdorff locally convex topology on $X$ that is coarser 
than the $\|\cdot\|$-topology, and $\gamma:=\gamma(\|\cdot\|,\tau)$ the mixed topology.
\begin{enumerate}
\item[(a)] Due to \cite[I.1.10 Proposition, p.~9]{cooper1978} a sequence $(x_{n})_{n\in\N}$ in $X$ is 
$\gamma$-convergent if and only if it is $\tau$-convergent and $\|\cdot\|$-bounded. This implies that 
$\tau$-bi-continuous semigroups are $\gamma$-strongly continuous and locally sequentially $\gamma$-equicontinuous.
\item[(b)] The condition of exponential boundedness in \prettyref{defn:bi_continuous} is superfluous. 
Indeed, if $(T(t))_{t\geq 0}$ is a semigroup of linear 
operators on $X$ that is $\gamma$-strongly continuous and locally sequentially $\gamma$-equicontinuous, 
then $(T(t))_{t\geq 0}$ is exponentially bounded by \cite[Proposition 3.6 (ii), p.~1137]{federico2020} 
because a set in $X$ is $\gamma$-bounded if and only if it is $\|\cdot\|$-bounded 
by \cite[2.4.1 Corollary, p.~56]{wiweger1961}. 
\item[(c)] Let $(X,\|\cdot\|,\tau)$ satisfy \prettyref{ass:standard}.
It follows from (a) and (b) that a family of operators $(T(t))_{t\geq 0}$ in $\mathcal{L}(X)$ is 
a $\tau$-bi-continuous semigroup if and only if it is a $\gamma$-strongly continuous 
and locally sequentially $\gamma$-equicontinuous semigroup. This remains valid if $\gamma$ is replaced by any other 
Hausdorff locally convex topology on $X$ that has the same convergent sequences as $\gamma$ 
(cf.~\cite[Lemma A.1.2, p.~72]{farkas2003} and \cite[Proposition 1.6, p.~313]{farkas2011}).
\end{enumerate}
\end{rem}

\section{Continuity, equicontinuity and tightness}
\label{sect:continuity}

We briefly recall the different types of equicontinuity that emerged in the context of semigroups.

\begin{defn}
Let $(X_{1},\tau_{1})$ and $(X_{2},\tau_{2})$ be Hausdorff locally convex spaces. 
A family $(T(t))_{t\in I}$ of maps from a set $M_{1}\subset X_{1}$ to $X_{2}$ 
is called ${\tau_{1}}_{\mid M_{1}}$-$\tau_{2}$\emph{-equicontinuous at} $x\in M_{1}$ if 
for every $\tau_{2}$-neighbourhood $U_{2}$ of zero in $X_{2}$ there is a $\tau_{1}$-neighbourhood $U_{1}$ of $x$ in 
$X_{1}$ such that $T(t)(U_{1}\cap M_{1})\subset T(t)(x)+U_{2}$ for all $t\in I$. 
The family $(T(t))_{t\in I}$ is called ${\tau_{1}}_{\mid M_{1}}$-$\tau_{2}$\emph{-equicontinuous on} $M_{1}$ 
if it is ${\tau_{1}}_{\mid M_{1}}$-$\tau_{2}$-equicontinuous at every $x\in M_{1}$. 
If $(X_{2},\tau_{2})=(X_{1},\tau_{1})$ and $M_{1}=X_{1}$, then we just write $\tau_{1}$-equicontinuous 
instead of ${\tau_{1}}_{\mid M_{1}}$-$\tau_{2}$-equicontinuous on $M_{1}$. 
\end{defn}

\begin{rem}
Let $\mathcal{P}_{\tau_{1}}$ and $\mathcal{P}_{\tau_{2}}$ be directed systems of seminorms inducing the 
topologies $\tau_{1}$ and $\tau_{2}$, respectively. 
If $(T(t))_{t\in I}$ is a family of linear maps $T(t)\colon X_{1}\to X_{2}$, then it is 
$\tau_{1}$-$\tau_{2}$-equicontinuous if and only if 
\[
\forall\;p\in\mathcal{P}_{\tau_{2}}\;\exists\;\widetilde{p}\in\mathcal{P}_{\tau_{1}},\;C\geq 0\;
\forall\;t\in I,\,x\in X_{1}:\;p(T(t)x)\leq C\widetilde{p}(x).
\]
\end{rem}

\begin{defn}
Let $(X,\tau)$ be a Hausdorff locally convex space and $(T(t))_{t\geq 0}$ a family of 
linear maps $X\to X$.
\begin{enumerate}
\item[(a)] $(T(t))_{t\geq 0}$ is called \emph{locally} $\tau$\emph{-equicontinuous} 
if $(T(t))_{t\in[0,t_{0}]}$ is $\tau$-equicontinu\-ous for all $t_{0}\geq 0$.
\item[(b)] $(T(t))_{t\geq 0}$ is called \emph{quasi-}$\tau$\emph{-equicontinuous} 
if there exists $\alpha\in\R$ such that $(e^{-\alpha t}T(t))_{t\geq 0}$ is $\tau$-equicontinuous.
\end{enumerate}
\end{defn}

$\tau$-equicontinuity of $(T(t))_{t\geq 0}$ implies quasi-$\tau$-equicontinuity, which implies local 
$\tau$-equicontinuity, which again implies the $\tau$-continuity of all $T(t)$, $t\geq 0$. 
Due to \cite[Example 3.2, p.~549]{kunze2009} the left translation semigroup on $\mathrm{C}_{\operatorname{b}}(\R)$ 
is locally $\tau_{\operatorname{co}}$-equicontinuous but not quasi-$\tau_{\operatorname{co}}$-equicontinuous.
The Gau{\ss}--Weierstra{\ss} semigroup on $\mathrm{C}_{\operatorname{b}}(\R^{d})$ 
is $\tau_{\operatorname{co}}$-bi-continuous but not locally $\tau_{\operatorname{co}}$-equicontinuous 
by \cite[Examples 6 (a), p.~209-210]{kuehnemund2003}.
However, we will see in \prettyref{ex:strict_topo_semigroups} that both semigroups are 
locally $\beta_{0}$-equicontinuous, even quasi-$\beta_{0}$-equicontinuous, where we recall that $\beta_{0}=\gamma(\|\cdot\|_{\infty},\tau_{\operatorname{co}})$.

\begin{defn}\label{defn:equitight}
Let $(X,\|\cdot\|,\tau)$ be a Saks space and $\mathcal{P}_{\tau}$ a directed system of 
seminorms generating the topology $\tau$. A family of linear maps $(T(t))_{t\in I}$ from $X$ to $ X$ is called 
$(\|\cdot\|,\tau)$\emph{-equitight}
if 
\[
\forall\;\varepsilon>0,\,p\in\mathcal{P}_{\tau}\;\exists\;\widetilde{p}\in\mathcal{P}_{\tau},\,C\geq 0\;
\forall\;t\in I,\,x\in X:\;p(T(t)x)\leq C \widetilde{p}(x)+\varepsilon\|x\|.
\]
%where $\mathcal{P}_{\tau}$ is a directed system of  seminorms generating the topology $\tau$ and satisfying \eqref{eq:saks}. 
If $I$ is a singleton, i.e.~$I=\{t\}$ for some $t$, we just call $T(t)$ $(\|\cdot\|,\tau)$\emph{-tight}. 
If no confusion seems to be likely, we just write equitight and tight instead of $(\|\cdot\|,\tau)$-equitight 
and $(\|\cdot\|,\tau)$-tight, respectively.
\end{defn}

We note that the definition of (equi-)tightness does not depend on the choice of the directed 
system of seminorms $\mathcal{P}_{\tau}$ that generates the topology $\tau$. 

Tight operators $T\in\mathcal{L}(X)$ as well as families of equitight operators 
$(T(t))_{t\in [0,t_{0}]}$ in $\mathcal{L}(X)$ were introduced in \cite[Definitions 1.2.20, 1.2.21, p.~12]{farkas2003} 
under the name \emph{local} (see \cite[Definition 5.13, p.~79]{budde2019a} and \cite[Definition 5, p.~5]{budde2021} 
as well).
In the context of $\tau$-bi-continuous semigroups $(T(t))_{t\geq 0}$ the notion of tightness is used in 
\cite[Definition 1.1, p.~668]{es_sarhir2006}, meaning that $(T(t))_{t\in [0,t_{0}]}$ is equitight (or 
local) for all $t_{0}\geq 0$. The name tightness stems from the relation to tight measures in the context of 
$\tau_{\operatorname{co}}$-bi-continuous semigroups on $\mathrm{C}_{\operatorname{b}}(\Omega)$, $\Omega$ \emph{Polish} 
(i.e.~completely metrisable and separable), see \cite[Lemma 2.3, Theorem 2.4, p.~92]{farkas2004}, 
\cite[Proposition 3.3, p.~317]{farkas2011} and \prettyref{rem:mackey_mazur} (a). 
We prefer the notion of tightness to localness in varying degrees in correspondence to (equi-)continuity 
and thus introduce the following notions of local and quasi-equitightness. 

\begin{defn}
Let $(X,\|\cdot\|,\tau)$ be a Saks space
and $(T(t))_{t\geq 0}$ a family of linear maps $X\to X$. 
\begin{enumerate}
\item[(a)] $(T(t))_{t\geq 0}$ is called \emph{locally equitight} if $(T(t))_{t\in [0,t_{0}]}$ is 
equitight for all $t_{0}\geq 0$.
\item[(b)] $(T(t))_{t\geq 0}$ is called \emph{quasi-equitight} if there is $\alpha\in\R$ such that 
$(e^{-\alpha t}T(t))_{t\geq 0}$ is equitight.
\end{enumerate}
\end{defn}

Equitightness of $(T(t))_{t\geq 0}$ implies quasi-equitightness, which implies local 
equitightness, which again implies the tightness of all $T(t)$, $t\geq 0$. 
We remark that the notion of tightness is not only relevant for bi-continuous semigroups but for other operators 
as well. In \cite[Lemma 1.2.23, p.~12]{farkas2003} it is shown that the resolvent family 
$\{R(\lambda,A)\;|\;\lambda\in [\alpha,\beta]\}$ for $\beta\geq\alpha>\omega$ of a 
locally equitight $\tau$-bi-continuous semigroup $(T(t))_{t\geq 0}$ of type $\omega$ is equitight 
as well.\footnote{The assumption that $(T(t))_{t\geq 0}$ should be locally equitight 
(i.e.~local in terms of \cite{farkas2003}) is missing in \cite[Lemma 1.2.23, p.~12]{farkas2003} 
but is used in its proof.} We note the following slight improvement (and correction) of this statement. 

\begin{prop}\label{prop:resolv_equitight}
Let $(X,\|\cdot\|,\tau)$ be a triple satisfying \prettyref{ass:standard} 
and $(T(t))_{t\geq 0}$ a $\tau$-bi-continuous semigroup.
If $(T(t))_{t\geq 0}$ is locally equitight, then $\{R(\lambda,A)\;|\;\re\lambda\in[\alpha,\beta]\}$ 
is equitight for all $\beta\geq\alpha>\omega_{0}(T)$. 
\end{prop}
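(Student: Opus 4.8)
The plan is to use the integral representation of the resolvent together with a splitting of the time axis: on a finite interval the local equitightness of $(T(t))_{t\geq 0}$ does the work, while on the tail the exponential bound makes the contribution negligibly small. Throughout I would work with the Saks system $\mathcal P_\tau$ satisfying $\|x\|=\sup_{p\in\mathcal P_\tau}p(x)$, so that $p\leq\|\cdot\|$ for each $p\in\mathcal P_\tau$; since equitightness is independent of the generating system of seminorms, this entails no loss of generality. For $\re\lambda=\mu>\omega_0(T)$ one has $R(\lambda,A)x=\int_0^\infty e^{-\lambda t}T(t)x\,\d t$ as a $\tau$-improper integral, convergent because $\|T(t)\|\leq Me^{\omega t}$ for any $\omega>\omega_0(T)$. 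The two facts I would use repeatedly are $p\bigl(\int f\,\d t\bigr)\leq\int p(f)\,\d t$ and $|e^{-\lambda t}|=e^{-\mu t}$; the latter ensures that only $\re\lambda$ enters the estimates, which is exactly what yields uniformity over the whole strip $\re\lambda\in[\alpha,\beta]$ and not merely over a real interval.

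Now fix $\varepsilon>0$ and $p\in\mathcal P_\tau$ and choose $\omega$ with $\omega_0(T)<\omega<\alpha$. For the tail I would estimate, for any $t_0\geq 0$ and any $\mu\geq\alpha$,
\[
p\Bigl(\int_{t_0}^\infty e^{-\lambda t}T(t)x\,\d t\Bigr)\leq M\|x\|\int_{t_0}^\infty e^{-(\mu-\omega)t}\,\d t\leq\frac{Me^{-(\alpha-\omega)t_0}}{\alpha-\omega}\,\|x\|,
\]
and then fix $t_0$ so large that the right-hand side is at most $\tfrac{\varepsilon}{2}\|x\|$ for all $\mu\geq\alpha$ at once. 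With this $t_0$ fixed, put $K:=\sup\{e^{-\mu t}:t\in[0,t_0],\ \mu\in[\alpha,\beta]\}<\infty$.

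For the head I would invoke the equitightness of $(T(t))_{t\in[0,t_0]}$ with seminorm $p$ and tolerance $\varepsilon':=\varepsilon/(2Kt_0)$, obtaining $\widetilde p\in\mathcal P_\tau$ and $C'\geq 0$ such that $p(T(t)x)\leq C'\widetilde p(x)+\varepsilon'\|x\|$ for all $t\in[0,t_0]$ and $x\in X$. Then, for $\re\lambda=\mu\in[\alpha,\beta]$,
\[
p\Bigl(\int_0^{t_0}e^{-\lambda t}T(t)x\,\d t\Bigr)\leq\int_0^{t_0}e^{-\mu t}\bigl(C'\widetilde p(x)+\varepsilon'\|x\|\bigr)\,\d t\leq Kt_0C'\,\widetilde p(x)+\tfrac{\varepsilon}{2}\|x\|,
\]
and adding the two bounds gives $p(R(\lambda,A)x)\leq C\widetilde p(x)+\varepsilon\|x\|$ with $C:=Kt_0C'$, uniformly for $\re\lambda\in[\alpha,\beta]$. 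This is precisely the asserted equitightness.

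The decisive point is conceptual rather than computational: local equitightness only controls the semigroup on compact time intervals, whereas the resolvent integrates over $[0,\infty)$. The splitting resolves this by absorbing the tail entirely into the $\varepsilon\|x\|$ term through the crude bound $p\leq\|\cdot\|$ and exponential decay, so that the genuine seminorm control is needed only on $[0,t_0]$. The one technical item I would check carefully is the validity of the integral representation and of $p\bigl(\int f\,\d t\bigr)\leq\int p(f)\,\d t$ in the $\tau$-improper (equivalently $\gamma$-improper) sense; this is routine, following from the $\tau$-continuity of $t\mapsto e^{-\lambda t}T(t)x$, the exponential bound, and continuity of the seminorms via a Riemann-sum approximation.
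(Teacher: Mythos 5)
Your proof is correct, and it is in substance the same argument the paper relies on: the paper's own proof consists of citing Lemma 1.2.23 of Farkas's thesis --- whose proof is precisely the Laplace-transform representation of $R(\lambda,A)$ combined with a head/tail splitting of the integral, local equitightness on the head and the exponential bound (with $p\leq\|\cdot\|$ from the Saks property) on the tail --- together with the observation, which you make explicitly via $|e^{-\lambda t}|=e^{-\re\lambda\,t}$, that only $\re\lambda$ enters the estimates, so the conclusion holds on the strip $\re\lambda\in[\alpha,\beta]$ with the type $\omega$ replaced by the growth bound $\omega_{0}(T)$. The only real difference is that your write-up is self-contained where the paper defers to the reference.
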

\begin{proof}
First, we note that \cite[Lemma 1.2.23, p.~12]{farkas2003}, which is based on representing the 
resolvent as a Laplace transform \cite[Definition 9, p.~213]{kuehnemund2003},
still holds if the type $\omega$ is replaced by the growth bound $\omega_{0}(T)$ 
and the condition $\lambda>\omega$ by $\re\lambda>\omega_{0}(T)$ 
since the estimates in the proof on \cite[p.~13]{farkas2003} still hold if $\lambda$ is replaced 
by $\re\lambda$ after the first ``$\leq$''-sign.
Then it follows from this updated version of \cite[Lemma 1.2.23, p.~12]{farkas2003} and 
\cite[Definition 1.2.21, p.~12]{farkas2003} 
that $\{R(\lambda,A)\;|\;\re\lambda\in[\alpha,\beta]\}$ is equitight 
for all $\beta\geq\alpha>\omega_{0}(T)$.
\end{proof}

The following example shows that one cannot drop the condition of local equi\-tightness of $(T(t))_{t\geq 0}$
in \prettyref{prop:resolv_equitight}.

\begin{exa}\label{ex:counterex}
We use the example of a $\tau_{\operatorname{co}}$-bi-continuous semigroup 
$(T(t))_{t\geq 0}$ on the space $\mathrm{C}_{\operatorname{b}}(\Omega)$ of bounded $\R$-valued continuous 
functions on $\Omega$ from \cite[Example 4.1, p.~320]{farkas2011} 
where $\Omega:=[0,\omega_{1})$ is equipped with the order topology and $\omega_{1}$ is the first uncountable ordinal. Let $\beta\Omega$ be the Stone--\v{C}ech compactification of $\Omega$, i.e.~$\beta\Omega=[0,\omega_{1}]$, and choose 
$x\in\beta\Omega$ as constructed in \cite[Example 4.1, p.~320]{farkas2011}, i.e.~$x=\omega_{1}$.
Then the operator $A:=\mathbf{1}\otimes\delta_{x}$, i.e.~$Af(z)=f(x)$, $z\in\Omega$, for 
$f\in\mathrm{C}_{\operatorname{b}}(\Omega)$ with $f$ continuously extended to $\beta\Omega$, 
belongs to $\mathcal{L}(\mathrm{C}_{\operatorname{b}}(\Omega))$ and generates the 
$\tau_{\operatorname{co}}$-bi-continuous semigroup on $\mathrm{C}_{\operatorname{b}}(\Omega)$ given by 
\[
T(t):=\operatorname{id}-A+e^{t}A,\quad t\geq 0,
\]
by \cite[Example 4.1, p.~320]{farkas2011}. Further,
the operators $T(t)$ for $t>0$ are not $\tau_{\operatorname{co}}$-continuous on $\|\cdot\|_{\infty}$-bounded sets 
by \cite[Example 4.1, p.~320]{farkas2011} and thus the semigroup $(T(t))_{t\geq 0}$ is not 
locally $(\|\cdot\|_{\infty},\tau_{\operatorname{co}})$-equitight by \prettyref{prop:mixed_equicont} below.
We have 
\[
 \|T(t)\|_{\mathcal{L}(\mathrm{C}_{\operatorname{b}}(\Omega))}
=\sup_{\substack{f\in\mathrm{C}_{\operatorname{b}}(\Omega)\\ \|f\|_{\infty}\leq 1}}\|T(t)f\|_{\infty}
=\sup_{\substack{f\in\mathrm{C}_{\operatorname{b}}(\Omega)\\ \|f\|_{\infty}\leq 1}}
 \sup_{z\in\Omega}|f(z)-f(x)+e^{t}f(x)|
\leq 3e^{t},\quad t\geq 0,
\]
thus $\omega_{0}(T)\leq 1$, and 
\begin{align*}
 R(\lambda,A)f
&=\int_{0}^{\infty}e^{-\lambda s}T(s)f\d s
 =\int_{0}^{\infty}e^{-\lambda s}(f-f(x)+e^{s}f(x))\d s\\
&=(f-f(x))\int_{0}^{\infty}e^{-\lambda s}\d s +f(x)\int_{0}^{\infty}e^{(1-\lambda)s}\d s 
 =\frac{f-f(x)}{\lambda}+\frac{f(x)}{\lambda-1}
\end{align*}
for $\re\lambda >1$ and $f\in\mathrm{C}_{\operatorname{b}}(\Omega)$ 
by \cite[Definition 9, p.~213]{kuehnemund2003} where the first two integrals 
are to be understood as improper $\tau_{\operatorname{co}}$-Riemann integrals. 

Now, suppose that $\{R(\lambda,A)\;|\;\lambda\in[2,3]\}$ is 
$(\|\cdot\|_{\infty},\tau_{\operatorname{co}})$-equitight. Let $K\subset\Omega$ be compact and 
choose $\varepsilon:=\frac{1}{n}$ for $n\in\N$. Then there are a compact set 
$\widetilde{K}_{K,n}\subset\Omega$ and $C_{K,n}\geq 0$ such that 
for all $\lambda\in[2,3]$ and $f\in\mathrm{C}_{\operatorname{b}}(\Omega)$ it holds that
\begin{equation}\label{eq:counterex}
\sup_{z\in K}\Bigl|\frac{1}{\lambda}f(z)+\frac{1}{\lambda(\lambda-1)}f(x)\Bigr|
=\sup_{z\in K}|R(\lambda,A)f(z)|
\leq C_{K,n}\sup_{z\in\widetilde{K}_{K,n}}|f(z)|+\frac{1}{n}\|f\|_{\infty}.
\end{equation}
The set $A_{K,n}:=(K\cup\widetilde{K}_{K,n})\subset\Omega=[0,\omega_{1})$ is a compact subset of 
$\beta\Omega=[0,\omega_{1}]$ for every $n\in\N$ and $x=\omega_{1}\notin[0,\omega_{1})$. 
Since $[0,\omega_{1}]$ is a completely regular Hausdorff space 
by \cite[Part II, 43.~Closed ordinal space {$[0,\Omega]$}; 4, p.~69]{steen1978}, for every $n\in\N$ there exists a continuous function $f_{n}\colon [0,\omega_{1}]\to [0,1]$ such that 
$f_{n}=0$ on $A_{K,n}$ and $f_{n}(x)=1$ by \cite[(2.1.5) Proposition, p.~17]{buchwalter1969}. 
This implies that
\[
 \frac{1}{6}
\leq\frac{1}{\lambda(\lambda-1)}
=\sup_{z\in K}\Bigl|\frac{1}{\lambda}f_{n}(z)+\frac{1}{\lambda(\lambda-1)}f_{n}(x)\Bigr|
\underset{\eqref{eq:counterex}}{\leq} C_{K,n}\sup_{z\in\widetilde{K}_{K,n}}|f_{n}(z)|+\frac{1}{n}\|f_{n}\|_{\infty}
=\frac{1}{n}
\]
for all $n\in\N$ and $\lambda\in[2,3]$, which is a contradiction.
\end{exa}

Local equitightness plays a crucial role in perturbation results for bi-continuous semigroups, 
see \cite[Theorem 1.2, p.~669]{es_sarhir2006} (cf.~\cite[Theorem 3.2.3, p.~47]{farkas2003}), 
\cite[Theorems 2.4, 3.2, p.~92, 94-95]{farkas2004}, \cite[Remark 4.1, p.~101]{farkas2004}, 
\cite[Theorem 5, p.~8]{budde2021} (cf.~\cite[Theorem 5.19, p.~81]{budde2019a})
and \cite[Theorem 3.3, p.~582]{budde2021a}.
Unfortunately, the missing assumption of local equitightness in \cite[Lemma 1.2.23, p.~12]{farkas2003} 
has some consequences for one of the cited references above. 

\begin{rem}\label{rem:consequences_of_missing_assump}
We refer to \cite{budde2021} for the relevant notions. In \cite[Theorem 5, p.~8]{budde2021} 
(cf.~\cite[Theorem 5.19, p.~81]{budde2019a}) we have the following assumptions.
Let $(X,\|\cdot\|,\tau)$ be a triple satisfying \prettyref{ass:standard}, 
$(A,D(A))$ the generator of a positive, locally equitight (local in terms of \cite{budde2021}) 
$\tau$-bi-continuous semigroup $(T(t))_{t\geq 0}$ on a bi-AL space $X$ with $\eta$-bi-dense domain $D(A)$ for some
$1<\eta<2$. Let $B\colon D(A)\to X$ be a positive operator, i.e.~$Bx \geq 0$ for each $x\in D(A)\cap X_{+}$, 
and assume that $BR(\lambda, A)$ is tight and $(A+B,D(A))$ is resolvent positive.

In the proof of \cite[Theorem 5, p.~8]{budde2021} it is used in \cite[p.~14]{budde2021} 
that for $s\in[0,1]$ the tight operator $sBR(\lambda, A)\in\mathcal{L}(X)$ generates
a $\tau$-bi-continuous and $\|\cdot\|$-strongly continuous semigroup $(E(t))_{t\geq 0}$ on $X$ given by 
\[
E(t)=e^{tsBR(\lambda,A)}=\sum_{n=0}^{\infty}\frac{t^{n}(sBR(\lambda,A))^{n}}{n!},\quad t\geq 0.
\]
Then \cite[Lemma 1.2.23, p.~12]{farkas2003} is applied to the semigroup $(E(t))_{t\geq 0}$ to conclude 
that $R(1,sBR(\lambda,A))$ is tight. But it seems not to be guaranteed that this conclusion is true 
due to the missing assumption in \cite[Lemma 1.2.23, p.~12]{farkas2003}. However, the conclusion is 
true if $(E(t))_{t\geq 0}$ is supposed to be locally equitight by \prettyref{prop:resolv_equitight}.
For instance, this additional assumption can be ensured by \prettyref{thm:mixed_equicont_main} (b) below which implies that every 
$\tau$-bi-continuous semigroup on $X$ is locally equitight
if the space $(X,\gamma)$ with mixed topology $\gamma:=\gamma(\|\cdot\|,\tau)$ 
is \emph{C-sequential}, meaning that every convex sequentially 
open subset of $(X,\gamma)$ is already open, and if $\gamma$ coincides with the \emph{submixed topology} $\gamma_{s}$ 
(see \prettyref{defn:submixed_top} below). 

Concerning the application of  \cite[Theorem 5, p.~8]{budde2021},
the \cite[Example 3.1, p.~14-15]{budde2021} of rank-one perturbations can be repaired, at least, 
under the additional assumption that $(X,\gamma)$ is C-sequential and $\gamma=\gamma_{s}$. 
The \cite[Example 3.2, p.~15-21]{budde2021} of the 
Gau{\ss}--Weierstra{\ss} semigroup on the space $X=\textrm{M}(\R)$ of bounded Borel measures on $\R$ 
can be saved as well since $\textrm{M}(\R)$ coincides with the space $\textrm{M}_{\operatorname{t}}(\R)$ 
of bounded Radon measures on the Polish space $\R$ and 
$\gamma:=\gamma(\|\cdot\|_{\textrm{M}_{\operatorname{t}}(\R)},\sigma(\textrm{M}_{\operatorname{t}}(\R),\textrm{C}_{\operatorname{b}}(\R)))$ fulfils that 
$(X,\gamma)$ is C-sequential and $\gamma=\gamma_{s}$ by \prettyref{cor:dual_mixed_top}.
Here $\|\cdot\|_{\textrm{M}_{\operatorname{t}}(\R)}$ denotes the total variation norm on 
$\textrm{M}_{\operatorname{t}}(\R)$ (see e.g.~\cite[p.~543]{kunze2009}).
\end{rem}

We will study the relation between $\gamma$-(equi-)continuity and (equi-)tightness in the forthcoming. 
For that purpose we introduce another kind of mixed topology (see \cite[p.~41]{cooper1978}).

\begin{defn}\label{defn:submixed_top}
Let $(X,\|\cdot\|,\tau)$ be a Saks space and $\mathcal{P}_{\tau}$ a directed system of seminorms 
that generates the topology $\tau$ and fulfils \eqref{eq:saks}. 
For a sequence $(p_{n})_{n\in\N}$ in $\mathcal{P}_{\tau}$ and a 
sequence $(a_{n})_{n\in\N}$ in $(0,\infty)$ with $\lim_{n\to\infty}a_{n}=\infty$ we define the seminorm
\[
 \vertiii{x}_{(p_{n}),(a_{n})}:=\sup_{n\in\N}p_{n}(x)a_{n}^{-1},\quad x\in X.
\]
We denote by $\gamma_s:=\gamma_s(\|\cdot\|,\tau)$ the locally convex Hausdorff topology that is generated by 
the system of seminorms $(\vertiii{\cdot}_{(p_n),(a_n)})$ and call it the \emph{submixed topology}.
\end{defn}

\begin{rem}\label{rem:mixed=submixed}
Let $(X,\|\cdot\|,\tau)$ be a Saks space, $\mathcal{P}_{\tau}$ a directed system of seminorms 
that generates the topology $\tau$ and fulfils \eqref{eq:saks}, $\gamma:=\gamma(\|\cdot\|,\tau)$ the mixed 
and $\gamma_{s}:=\gamma_{s}(\|\cdot\|,\tau)$ the submixed topology.
\begin{enumerate}
\item[(a)] We have $\tau\leq\gamma_s\leq \gamma$ since $\gamma_{s}$ is stronger than $\tau$ by definition 
and coarser than $\gamma$ on $X$ by the first part of the proof of \cite[I.4.5 Proposition, p.~41-42]{cooper1978} 
where $\gamma_s$ is called $\widetilde{\widetilde{\gamma}}$. Moreover, $\gamma_{s}$ has the same convergent 
sequences as $\gamma$ by \cite[I.1.10 Proposition, p.~9]{cooper1978} and \cite[Lemma A.1.2, p.~72]{farkas2003} 
where $\gamma_{s}$ is called $\tau_{m}$.
\item[(b)] If 
 \begin{enumerate}
 \item[(i)] for every $x\in X$, $\varepsilon>0$ and $p\in\mathcal{P}_{\tau}$ there are $y,z\in X$ such that $x=y+z$, 
 $p(z)=0$ and $\|y\|\leq p(x)+\varepsilon$, or 
 \item[(ii)] the $\|\cdot\|$-unit ball $B_{\|\cdot\|}=\{x\in X\;|\; \|x\|\leq 1\}$ is $\tau$-compact,
 \end{enumerate}
then $\gamma=\gamma_s$ due to \cite[I.4.5 Proposition, p.~41-42]{cooper1978}. 
\end{enumerate}
\end{rem}

The submixed topology $\gamma_s$ was introduced in \cite[Theorem 3.1.1, p.~62]{wiweger1961}. 
It also appears under the name mixed topology with symbol $\tau_{m}$ or $\tau_{0}^{\mathcal{M}}$
in \cite[Definition A.1.1, p.~72]{farkas2003}, \cite[Definition 2.4, p.~579]{budde2021a} and 
\cite[Definition 2.1, p.~21]{goldys2001} but usually in a context where $\gamma=\gamma_s$ holds. 

\begin{exa}\label{ex:mixed=submixed}
\begin{enumerate}
\item[(a)] Let $\Omega$ be a completely regular Hausdorff space. 
The Saks space $(\mathrm{C}_{\operatorname{b}}(\Omega),\|\cdot\|_{\infty},\tau_{\operatorname{co}})$ 
fulfils condition (i) of \prettyref{rem:mixed=submixed} (b) and 
$\gamma(\|\cdot\|_{\infty},\tau_{\operatorname{co}})=\gamma_s(\|\cdot\|_{\infty},\tau_{\operatorname{co}})$ 
by \cite[Example D), p.~65-66]{wiweger1961}. 
\item[(b)] Let $(X,\|\cdot\|)$ be a Banach space. 
The Saks space $(X',\|\cdot\|_{X'},\sigma^{\ast})$ fulfils condition (ii) 
of \prettyref{rem:mixed=submixed} (b) with $B_{\|\cdot\|_{X'}}$ and 
$\gamma(\|\cdot\|_{X'},\sigma^{\ast})=\gamma_s(\|\cdot\|_{X'},\sigma^{\ast})$ by \cite[Example E), p.~66]{wiweger1961}.
\item[(c)] Let $(X,\|\cdot\|)$ be a Banach space. Due to \cite[Proposition 3.1, p.~275]{schluechtermann1991} 
$B_{\|\cdot\|_{X'}}$ is $\mu^{\ast}$-compact if and only if 
$X$ is a \emph{Schur space}, i.e.~every $\sigma(X,X')$-convergent sequence is $\|\cdot\|$-convergent 
\cite[p.~253]{fabian2011}. Thus the Saks space $(X',\|\cdot\|_{X'},\mu^{\ast})$ satisfies condition (ii) 
of \prettyref{rem:mixed=submixed} (b) and 
$\gamma(\|\cdot\|_{X'},\mu^{\ast})=\gamma_s(\|\cdot\|_{X'},\mu^{\ast})$ if $X$ is a Schur space. 
\item[(d)] Let $(X,\|\cdot\|_{X})$ and $(Y,\|\cdot\|_{Y})$ be Banach spaces. 
$B_{\|\cdot\|_{\mathcal{L}(X;Y)}}$ is $\tau_{\operatorname{wot}}$-compact 
if and only if $Y$ is reflexive by \cite[Theorem 2.19, p.~1689]{choi2008}. 
$B_{\|\cdot\|_{\mathcal{L}(X;Y)}}$ is $\tau_{\operatorname{sot}}$-compact if and only if $Y$ is finite-dimensional 
by \cite[Theorem 3.15, p.~1699]{choi2008}. 
Thus for the Saks spaces $(\mathcal{L}(X;Y),\|\cdot\|_{\mathcal{L}(X;Y)},\tau_{\operatorname{wot}})$  
and $(\mathcal{L}(X;Y),\|\cdot\|_{\mathcal{L}(X;Y)},\tau_{\operatorname{sot}})$ 
condition (ii) of \prettyref{rem:mixed=submixed} (b) holds and 
$\gamma(\|\cdot\|_{\mathcal{L}(X;Y)},\tau_{\operatorname{wot}})
=\gamma_s(\|\cdot\|_{\mathcal{L}(X;Y)},\tau_{\operatorname{wot}})$ if $Y$ is reflexive,
and $\gamma(\|\cdot\|_{\mathcal{L}(X;Y)},\tau_{\operatorname{sot}})
=\gamma_s(\|\cdot\|_{\mathcal{L}(X;Y)},\tau_{\operatorname{sot}})$ if $Y$ is finite-dimensional, respectively.
If $Y$ is the scalar field of $X$, then the last case is already covered by example (b) above. 
\end{enumerate}
\end{exa}

Concerning example (c), we note that the space of absolutely summing sequences $\ell^{1}:=\ell^{1}(\N)$ 
is a Schur space by \cite[Theorem 5.36, p.~252]{fabian2011}. Another example is the \emph{Lipschitz-free} or 
\emph{Arens-Eells space} $\mathcal{F}(M,\omega\circ\d)$ by \cite[Theorem 4.6, p.~186]{kalton2004} 
where $(M,\d)$ is a metric space and $\omega\colon[0,\infty)\to[0,\infty)$ a continuous 
increasing subadditive function with $\omega(0)=0$, $\omega(t)\geq t$ for $0\leq t\leq 1$ 
and $\lim_{t\to 0\rlim}\omega(t)/t=\infty$, e.g.~$\omega(t):=t^{\alpha}$ when $0<\alpha<1$. 
If $M$ contains a distinguished point (the origin), denoted by $0$, then $\mathcal{F}(M,\omega\circ\d)$ 
is the canonical predual of the space $\mathrm{Lip}(M)$ of Lipschitz continuous functions $f\colon M\to\R$ 
with $f(0)=0$ equipped with the norm 
\[
\|f\|_{\operatorname{Lip}}:=\sup_{x,y\in M,x\neq y}\frac{|f(x)-f(y)|}{(\omega\circ\d)(x,y)}
\]
(see \cite[p.~179]{kalton2004}). If $\omega(t):=t^{\alpha}$ for some $0<\alpha<1$, then $\mathrm{Lip}(M)$ 
is the space of $\alpha$-H\"older continuous functions on $(M,\d)$ that vanish at $0$.

For the next proposition we recall our notation that $\mathcal{L}(X)=\mathcal{L}((X,\|\cdot\|);(X,\|\cdot\|))$.

\begin{prop}\label{prop:mixed_cont}
Let $(X,\|\cdot\|,\tau)$ be a Saks space, $\gamma:=\gamma(\|\cdot\|,\tau)$ the mixed 
and $\gamma_{s}:=\gamma_{s}(\|\cdot\|,\tau)$ the submixed topology.
Consider the following assertions for a linear map $S\colon X\to X$:
\begin{enumerate}
\item[(a)] $S$ is $\gamma_s$-continuous.
\item[(b)] $S$ is $\gamma_s$-$\tau$-continuous.
\item[(c)] $S$ is $(\|\cdot\|,\tau)$-tight.
\item[(d)] $S$ is $\tau$-continuous on $\|\cdot\|$-bounded sets, i.e.~the restricted map 
$S_{\mid B}\colon (B,\tau_{\mid B}) \to (X,\tau)$ is continuous on $B$ for every $\|\cdot\|$-bounded set $B\subset X$.
\item[(e)] $S$ is $\tau$-continuous at zero on the $\|\cdot\|$-unit ball $B_{\|\cdot\|}$.
\item[(f)] $S$ is $\gamma$-$\tau$-continuous.
\item[(g)] $S$ is $\gamma$-continuous.
\end{enumerate}
Then $(a)\Rightarrow (b)\Rightarrow (c)\Rightarrow (d)\Leftrightarrow (e)\Leftrightarrow (f)\Leftarrow (g)$. 
Moreover, if $S\in \mathcal{L}(X)$, then $(f)\Rightarrow (g)$. 
If $S\in \mathcal{L}(X)$ and $\gamma=\gamma_s$, then all seven assertions are equivalent.
\end{prop}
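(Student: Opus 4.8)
The plan is to establish the linear chain first and to isolate the one genuinely topological step. Throughout I fix a directed system $\mathcal{P}_\tau$ generating $\tau$ and satisfying \eqref{eq:saks}; by \prettyref{defn:equitight} the notion of (equi)tightness does not depend on this choice. Several implications are immediate from the comparison $\tau\le\gamma_s\le\gamma\le\tau_{\|\cdot\|}$ recorded in \prettyref{rem:mixed=submixed}(a). Since $\tau\le\gamma_s$, a $\gamma_s$-continuous $S$ is \emph{a fortiori} $\gamma_s$-$\tau$-continuous, which gives $(a)\Rightarrow(b)$; likewise $\tau\le\gamma$ gives $(g)\Rightarrow(f)$. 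For $(f)\Rightarrow(d)$ I use that $\gamma$ coincides with $\tau$ on $\|\cdot\|$-bounded sets (\prettyref{defn:mixed_top_Saks}(a)): restricting a $\gamma$-$\tau$-continuous $S$ to a bounded $B$ and noting $\gamma_{\mid B}=\tau_{\mid B}$ yields $\tau_{\mid B}$-$\tau$-continuity. Finally $(d)\Leftrightarrow(e)$ is pure linearity and rescaling, where $(d)\Rightarrow(e)$ is trivial and the converse transfers continuity at $0$ on $B_{\|\cdot\|}$ to continuity at any $x_0$ of any bounded $B\subseteq RB_{\|\cdot\|}$ by applying the hypothesis to $(x-x_0)/(2R)$ and using that $\tau$ is a vector topology.

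The two quantitative steps are $(b)\Rightarrow(c)$ and $(c)\Rightarrow(d)$. For $(b)\Rightarrow(c)$, $\gamma_s$-$\tau$-continuity of the linear map $S$ means that for each $p\in\mathcal{P}_\tau$ there are $C\ge0$, a sequence $(p_n)$ in $\mathcal{P}_\tau$ and $(a_n)$ in $(0,\infty)$ with $a_n\to\infty$ such that $p(Sx)\le C\vertiii{x}_{(p_n),(a_n)}=C\sup_{n}p_n(x)a_n^{-1}$. Given $\varepsilon>0$, I split the supremum: I choose $N$ with $a_n>C/\varepsilon$ for $n>N$, so that $Cp_n(x)a_n^{-1}\le C\|x\|a_n^{-1}\le\varepsilon\|x\|$ by \eqref{eq:saks}; for the finitely many $n\le N$, directedness gives $\widetilde{p}\in\mathcal{P}_\tau$ with $p_n\le\widetilde{p}$, and I bound $Cp_n(x)a_n^{-1}\le(C/a)\widetilde{p}(x)$ with $a:=\min_{n\le N}a_n>0$. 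Hence $p(Sx)\le(C/a)\widetilde{p}(x)+\varepsilon\|x\|$, which is exactly $(\|\cdot\|,\tau)$-tightness. For $(c)\Rightarrow(d)$ I combine tightness with linearity: on $B\subseteq RB_{\|\cdot\|}$ and at $x_0\in B$, given $p$ and $\delta>0$ I pick $\varepsilon$ with $2R\varepsilon<\delta/2$, obtain $\widetilde{p},C$ from tightness, and check that $\{x:\widetilde{p}(x-x_0)<\delta/(2C)\}$ is the desired $\tau$-neighbourhood, since $p(S(x-x_0))\le C\widetilde{p}(x-x_0)+\varepsilon\|x-x_0\|<\delta$.

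The crux, and the step I expect to be the main obstacle, is $(e)\Rightarrow(f)$, for which I would invoke the universal property of the mixed topology (\cite{wiweger1961}, \cite[I.1]{cooper1978}): a linear map $T\colon(X,\gamma)\to(Y,\sigma)$ into a locally convex space is continuous if and only if $T$ maps $\|\cdot\|$-bounded sets to $\sigma$-bounded sets and every restriction $T_{\mid B}$ to a $\|\cdot\|$-bounded set is $\tau$-$\sigma$-continuous. The second condition is $(d)$, already equivalent to $(e)$. The first condition, namely that $S(B_{\|\cdot\|})$ be $\tau$-bounded, is not obviously implied by mere continuity at $0$, and this is the delicate point; I would secure it by a rescaling argument. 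If some $p\circ S$ were unbounded on $B_{\|\cdot\|}$, I pick $x_n\in B_{\|\cdot\|}$ with $p(Sx_n)\ge1$ and $p(Sx_n)\to\infty$ and set $y_n:=x_n/\sqrt{p(Sx_n)}$; then $\|y_n\|\to0$, so $y_n\to0$ in $\|\cdot\|$ and hence in $\tau$ with $y_n\in B_{\|\cdot\|}$, while $p(Sy_n)=\sqrt{p(Sx_n)}\to\infty$, contradicting $\tau$-continuity of $S$ at $0$ on $B_{\|\cdot\|}$. Thus $(e)$ forces $\tau$-boundedness of $S(B_{\|\cdot\|})$, the universal property applies, and $(f)$ follows.

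It remains to treat the operator case. For $(f)\Rightarrow(g)$ with $S\in\mathcal{L}(X)$ I again use the universal property, now with target $(X,\gamma)$: $S$ maps $\|\cdot\|$-bounded sets into $\|\cdot\|$-bounded (equivalently $\gamma$-bounded) sets by $\|\cdot\|$-continuity, and for bounded $B$ the restriction $S_{\mid B}$ lands in some $CB_{\|\cdot\|}$ on which $\gamma=\tau$. Combining the $\tau$-$\tau$-continuity of $S_{\mid B}$ from $(d)$ with $\gamma_{\mid CB_{\|\cdot\|}}=\tau_{\mid CB_{\|\cdot\|}}$ shows that $S_{\mid B}$ is $\tau$-$\gamma$-continuous, whence $S$ is $\gamma$-continuous. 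This yields $(d)\Leftrightarrow(e)\Leftrightarrow(f)\Leftrightarrow(g)$ for $S\in\mathcal{L}(X)$. Finally, if in addition $\gamma=\gamma_s$, then $(a)$ and $(g)$ literally coincide, so the already-proven implications $(a)\Rightarrow(b)\Rightarrow(c)\Rightarrow(d)$ close the loop through $(g)=(a)$ and all seven assertions become equivalent.
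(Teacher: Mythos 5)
Your proposal is correct and follows essentially the same route as the paper: the same chain of implications, the same seminorm-splitting computation for $(b)\Rightarrow(c)$, the same elementary estimate deriving $(d)$/$(e)$ from tightness, and the same key external ingredient at the crux $(e)\Rightarrow(f)$, namely the universal property of the mixed topology, which the paper invokes directly as \cite[I.1.7 Corollary, I.1.8 Lemma, p.~8]{cooper1978} rather than inlining the translation/rescaling arguments you write out. The one redundancy is your verification that $S(B_{\|\cdot\|})$ is $\tau$-bounded: for linear maps into a locally convex space, $\tau$-continuity of the restrictions to $\|\cdot\|$-bounded sets alone already characterises $\gamma$-continuity (this is exactly the cited corollary, and is what makes the paper's $(d)\Leftrightarrow(f)$ hold for arbitrary linear $S$), so the rescaling argument securing boundedness, while correct, is not needed.
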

\begin{proof}
In the following proof $\mathcal{P}_{\tau}$ denotes the directed system of seminorms that generates the 
$\tau$-topology and fulfils \eqref{eq:saks}, i.e.~$\|x\|=\sup_{p\in\mathcal{P}_{\tau}}p(x)$ for all $x\in X$, 
from the definition of the submixed topology $\gamma_{s}$ 
(see the note below \prettyref{defn:equitight}).
 
$(a)\Rightarrow (b)$, $(f)\Leftarrow (g)$: Obvious as $\tau$ is coarser than $\gamma_s$ and $\gamma$. 

$(b)\Rightarrow (c)$: Let $S\colon (X,\gamma_s)\to (X,\tau)$ be continuous and $p\in\mathcal{P}_{\tau}$. 
Then there are $(p_{n})_{n\in\N}$ in $\mathcal{P}_{\tau}$, a sequence $(a_{n})_{n\in\N}$ in $(0,\infty)$ 
with $\lim_{n\to\infty}a_{n}=\infty$ and $C>0$ such that for any $x\in X$
\[
p(Sx)\leq C \vertiii{x}_{(p_{n}),(a_{n})}=C\sup_{n\in\N}p_{n}(x)a_{n}^{-1}.
\]
Let $\varepsilon >0$. Then there is $N\in\N$ such that $a_{n}^{-1}\leq \varepsilon/C$ for all $n\geq N$. Since 
$\mathcal{P}_{\tau}$ is directed, there are $\widetilde{p}\in\mathcal{P}_{\tau}$ and $\widetilde{C}\geq 0$ such that
\[
     p(Sx)
\leq C\sup_{1\leq n\leq N}p_{n}(x)a_{n}^{-1}+C\sup_{n\geq N}p_{n}(x)a_{n}^{-1}
\leq C\widetilde{C}\sup_{1\leq n\leq N}a_{n}^{-1}\widetilde{p}(x)+\varepsilon\|x\|
\]
for all $x\in X$ and thus (c) holds.

$(c)\Rightarrow (e)$: Let (c) hold and $\widetilde{\varepsilon}>0$. 
For $p\in\mathcal{P}_{\tau}$ and $\varepsilon:=\widetilde{\varepsilon}/2$ there are 
$\widetilde{p}\in\mathcal{P}_{\tau}$ and $C\geq 0$ by (c) such that 
\[
     p(Sx)
\leq C \widetilde{p}(x)+\varepsilon\|x\|
\leq C \widetilde{p}(x)+\frac{\widetilde{\varepsilon}}{2}
\]
for all $x\in B_{\|\cdot\|}$. We set 
$U_{\widetilde{p}}(0;\tfrac{\widetilde{\varepsilon}}{2C})
:=\{x\in X\;|\;\widetilde{p}(x)\leq\tfrac{\widetilde{\varepsilon}}{2C}\}$
and note that 
\[
     p(Sx)
\leq C \widetilde{p}(x)+\frac{\widetilde{\varepsilon}}{2}
\leq \widetilde{\varepsilon}
\] 
for all $x\in U_{\widetilde{p}}(0;\tfrac{\widetilde{\varepsilon}}{2C})\cap B_{\|\cdot\|}$, 
implying the $\tau$-continuity of $S$ at zero on $B_{\|\cdot\|}$.

$(d)\Leftrightarrow (e)\Leftrightarrow (f)$: The first equivalence is \cite[I.1.8 Lemma p.~8]{cooper1978} combined 
with the fact that a subset $B\subset X$ is $\|\cdot\|$-bounded if and only if there is $n\in\N$ such that
$B\subset n B_{\|\cdot\|}$. Due to \cite[I.1.7 Corollary p.~8]{cooper1978} the equivalence $(d)\Leftrightarrow (f)$ 
holds.

$(f)\Rightarrow (g)$ if $S\in \mathcal{L}(X)$: Let (f) hold and $S\in \mathcal{L}(X)$. 
We know that (f) and (e) are equivalent. Let $\varepsilon >0$ and $p\in\mathcal{P}_{\tau}$. Due to (e) 
there are $\widetilde{p}\in\mathcal{P}_{\tau}$ and $\delta>0$ such that 
$
p(Sx)\leq \varepsilon
$
for all $x\in U_{\widetilde{p}}(0;\delta)\cap B_{\|\cdot\|}$. Since $S\in \mathcal{L}(X)$, we have that 
$\|Sx\|\leq \|S\|_{\mathcal{L}(X)}$ for all $x\in B_{\|\cdot\|}$. 
It follows that $Sx\in U_{p}(0;\varepsilon)\cap \|S\|_{\mathcal{L}(X)}B_{\|\cdot\|}$ for all 
$x\in U_{\widetilde{p}}(0;\delta)\cap B_{\|\cdot\|}$. Due to \cite[I.1.7 Corollary p.~8]{cooper1978} and 
\cite[I.1.8 Lemma p.~8]{cooper1978} this means that $S\colon (X,\gamma)\to (X,\gamma)$ is continuous.
\end{proof}

Let us recall that a topological vector space $(X,\tau)$ is called \emph{convex-sequential} or \emph{C-sequential} 
(see \cite[p.~273]{snipes1973}) if every convex sequentially open subset of $(X,\tau)$ is already open. 
For our next proofs we need a classification of C-sequential Hausdorff locally 
convex spaces. Let $(X,\tau)$ be a Hausdorff locally convex space and $\mathcal{U}^{+}$ 
be the collection of all absolutely convex subsets $U\subset X$ which satisfy the condition that every sequence 
$(x_{n})_{n\in\N}$ in $X$ converging to $0$ is eventually in $U$.
Then $\mathcal{U}^{+}$ is a zero neighbourhood basis for a Hausdorff locally convex topology 
$\tau^{+}\geq\tau$ on $X$, which is the finest Hausdorff locally convex topology on $X$ with the same 
convergent sequences as $\tau$ by \cite[Proposition 1.1, p.~342]{webb1968}. 
In the view of \prettyref{rem:mixed=submixed} (a) we have $\gamma^{+}=\gamma_{s}^{+}$. 

\begin{prop}[{\cite[Theorem 7.4, p.~52]{wilansky1981}}]\label{prop:c_sequential}
Let $(X,\tau)$ be a Hausdorff locally convex space. Then the following assertions are equivalent:
\begin{enumerate}
\item[(a)] $X$ is C-sequential.
\item[(b)] $\tau^{+}=\tau$.
\item[(c)] For any Hausdorff locally convex space $Y$ a linear map $f\colon X\to Y$ is continuous if and only if 
it is sequentially continuous. 
\end{enumerate}
\end{prop}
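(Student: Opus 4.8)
The plan is to close the equivalences by exploiting throughout the description of $\tau^{+}$ via the zero neighbourhood basis $\mathcal{U}^{+}$ recalled just before the statement, together with the two facts recalled there: $\tau\leq\tau^{+}$, and $\tau$ and $\tau^{+}$ have the same convergent sequences. I would split the work into the \emph{soft} equivalence $(b)\Leftrightarrow(c)$, which only manipulates preimages of neighbourhoods and the identity map, and the \emph{harder} equivalence $(a)\Leftrightarrow(b)$, which I would run through gauge functionals of absolutely convex sets. The single auxiliary observation I would isolate first is that, for an absolutely convex set $U$, the gauge $p_{U}$ is a seminorm and $U$ absorbs every $\tau$-null sequence if and only if $p_{U}(w_{n})\to 0$ for every $\tau$-null sequence $(w_{n})_{n\in\N}$ (the nontrivial direction follows from $w_{n}/\varepsilon$ being eventually in $U$ for each $\varepsilon>0$).

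For $(b)\Rightarrow(c)$ let $f\colon X\to Y$ be linear and sequentially continuous and let $W$ be an absolutely convex $\tau_{Y}$-neighbourhood of $0$. Then $f^{-1}(W)$ is absolutely convex, and if $x_{n}\to 0$ in $\tau$ then $f(x_{n})\to 0$, hence $f(x_{n})$ is eventually in $W$; thus $f^{-1}(W)$ absorbs every $\tau$-null sequence, i.e.\ $f^{-1}(W)\in\mathcal{U}^{+}$. So $f^{-1}(W)$ is a $\tau^{+}$-neighbourhood of $0$, and by $(b)$ a $\tau$-neighbourhood of $0$; linearity then gives continuity of $f$. The converse $(c)\Rightarrow(b)$ I would obtain by applying $(c)$ to $\id\colon(X,\tau)\to(X,\tau^{+})$, which is sequentially continuous precisely because $\tau$ and $\tau^{+}$ share their convergent sequences; continuity of this identity map yields $\tau^{+}\leq\tau$, whence $\tau^{+}=\tau$.

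For $(a)\Rightarrow(b)$ I would take $U\in\mathcal{U}^{+}$ and consider $W:=\{x\in X: p_{U}(x)<1\}\subseteq U$. The relation $p_{U}(x_{n})\to p_{U}(x)$ along $\tau$-convergent sequences (via $p_{U}(x_{n}-x)\to 0$) shows that $W$ is convex and sequentially open, so $(a)$ forces $W$ to be $\tau$-open; since $0\in W\subseteq U$, the set $U$ is a $\tau$-neighbourhood of $0$, giving $\tau^{+}\leq\tau$ and hence $\tau^{+}=\tau$. For $(b)\Rightarrow(a)$ I would first reduce, by translation, to a convex sequentially open set $A$ with $0\in A$; sequential openness at $0$ says exactly that $A$ absorbs every $\tau$-null sequence, so $A$ is absorbing and its gauge $p_{A}$ is sublinear. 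To produce an absolutely convex member of $\mathcal{U}^{+}$ inside $A$ I would symmetrise, setting $q(x):=\sup_{|\lambda|=1}p_{A}(\lambda x)$, check that $q$ is a seminorm with $\{q<1\}\subseteq A$, and take $U:=\{q<1\}\in\mathcal{U}^{+}$; then $A$ is a $\tau^{+}$-neighbourhood of $0$, and $(b)$ converts this into a $\tau$-neighbourhood, so $A$ is $\tau$-open.

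The main obstacle is the finiteness and the null-sequence behaviour of $q$ in this last step. Finiteness of $q(x)$ amounts to $A$ absorbing the circle $\{\lambda x:|\lambda|=1\}$ uniformly in $\lambda$; I would argue that a failure yields scalars $\lambda_{n}$ with $p_{A}(\lambda_{n}x)\to\infty$, i.e.\ $\lambda_{n}x/n\notin A$, while $\lambda_{n}x/n\to 0$ is a $\tau$-null sequence (the circle is compact, hence bounded), contradicting that $A$ absorbs null sequences. That $U=\{q<1\}$ absorbs null sequences then reduces to $q(w_{n})\to 0$, which I would deduce from $p_{A}(\lambda_{n}w_{n})\to 0$ for arbitrary modulus-one $\lambda_{n}$, using that $\lambda_{n}w_{n}\to 0$ whenever $w_{n}\to 0$. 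Over the real field the circle degenerates to $\{\pm 1\}$ and one may simply use $A\cap(-A)$, so the only genuine subtlety is this uniformity over the scalar circle in the complex case.
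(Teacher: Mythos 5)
Your argument is correct, but there is nothing in the paper to compare it against: the paper does not prove this proposition, it quotes it verbatim from Wilansky's book (Theorem 7.4, p.~52 there), and the surrounding text merely records Webb's description of $\tau^{+}$ via the basis $\mathcal{U}^{+}$ of absolutely convex sets that eventually contain every $\tau$-null sequence --- precisely the description you build on. So what you have produced is a self-contained substitute for the citation, and it holds up. The soft equivalence is fine: $(b)\Rightarrow(c)$ because $f^{-1}(W)\in\mathcal{U}^{+}$ for every absolutely convex zero neighbourhood $W$ in $Y$, and $(c)\Rightarrow(b)$ by applying $(c)$ to $\id\colon(X,\tau)\to(X,\tau^{+})$, which is legitimate since Webb's result (as recalled in the paper) guarantees that $(X,\tau^{+})$ is Hausdorff locally convex with the same convergent sequences as $\tau$. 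For $(a)\Rightarrow(b)$, your auxiliary observation gives $p_{U}(x_{n}-x)\to 0$ for $U\in\mathcal{U}^{+}$ and $\tau$-convergent $x_{n}\to x$, so $\{p_{U}<1\}$ is convex, sequentially open, and squeezed between $0$ and $U$; this is exactly what is needed. For $(b)\Rightarrow(a)$, the two subtleties you isolate are real and your treatment of both is sound: finiteness of $q(x)=\sup_{|\lambda|=1}p_{A}(\lambda x)$ follows since a failure produces a $\tau$-null sequence $\lambda_{n}x/n$ escaping $A$, and the membership $\{q<1\}\in\mathcal{U}^{+}$ follows from the null-sequence statement for arbitrary unimodular $(\lambda_{n})$. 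One step you gloss but which does work: pointwise-in-$\lambda$ convergence $p_{A}(\lambda w_{n})\to 0$ would not suffice for the supremum, so one must run the subsequence trick --- if $q(w_{n_{k}})>\varepsilon$ along a subsequence, choose $\lambda_{n_{k}}$ nearly attaining the supremum, set $\lambda_{n}=1$ elsewhere, and contradict $p_{A}(\lambda_{n}w_{n})\to 0$; since you phrase the reduction in terms of arbitrary sequences of unit scalars rather than fixed $\lambda$, this is evidently the mechanism you intend, and it closes the proof.
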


Every bornological topological vector space is C-sequential by \cite[Theorem 8, p.~280]{snipes1973}. 
However, if $(X,\gamma)$ is bornological (or barrelled) for the mixed topology 
$\gamma:=\gamma(\|\cdot\|,\tau)$, then $\gamma$ coincides with the $\|\cdot\|$-topology 
by \cite[I.1.15 Proposition, p.~12]{cooper1978}. Hence in the interesting cases (for us) the space 
$(X,\gamma)$ is neither bornological nor barrelled. We recall the following sufficient conditions 
for being C-sequential given in 
\cite[Proposition 5.7, p.~2681-2682]{kruse_meichnser_seifert2018} and 
\cite[Corollary 7.6, p.~52]{wilansky1981}.

\begin{prop}\label{prop:mixed_topology_sequential}
Let $(X,\|\cdot\|)$ be a Banach space and $\tau$ a Hausdorff locally convex topology on $X$ that is coarser than the 
$\|\cdot\|$-topology and $\gamma:=\gamma(\|\cdot\|,\tau)$ the mixed topology. If 
\begin{enumerate}
\item[(i)] $\tau$ is metrisable on the $\|\cdot\|$-unit ball $B_{\|\cdot\|}:=\{x\in X\;|\;\|x\|\leq 1\}$, or
\item[(ii)] $(X,\gamma)$ is a Mackey--Mazur space,
\end{enumerate}
then $(X,\gamma)$ is a C-sequential space.
\end{prop}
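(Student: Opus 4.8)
The plan is to prove \prettyref{prop:mixed_topology_sequential} by reducing both cases to the characterization of C-sequentiality in \prettyref{prop:c_sequential}, namely showing $\gamma^{+}=\gamma$. Recall that $\gamma^{+}$ is the finest Hausdorff locally convex topology with the same convergent sequences as $\gamma$, and by \prettyref{rem:mixed_top_bi_cont}~(a) the $\gamma$-convergent sequences are exactly the $\|\cdot\|$-bounded $\tau$-convergent sequences. Since $\gamma\leq\gamma^{+}$ always holds, the task is to verify $\gamma^{+}\leq\gamma$, i.e.\ that every $\gamma^{+}$-zero neighbourhood is a $\gamma$-zero neighbourhood.

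For case (i), the idea is to exploit metrisability of $\tau$ on $B_{\|\cdot\|}$ to upgrade sequential statements to topological ones on bounded sets. First I would take $U\in\mathcal{U}^{+}$, an absolutely convex set absorbing every $\gamma$-null sequence. The goal is to show $U$ is a $\gamma$-zero neighbourhood, and by \prettyref{prop:mixed_cont}-type reasoning (or directly by the definition of $\gamma$ via \cite[I.1.8 Lemma p.~8]{cooper1978}) it suffices to show that $U\cap nB_{\|\cdot\|}$ is a relative $\tau$-neighbourhood of $0$ in $nB_{\|\cdot\|}$ for each $n\in\N$. Suppose not: then on some ball one can extract a sequence $x_k\in nB_{\|\cdot\|}$ with $x_k\notin U$ but $x_k\to 0$ in the metrisable trace topology $\tau_{\mid nB_{\|\cdot\|}}$. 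Such a sequence is $\|\cdot\|$-bounded and $\tau$-convergent to $0$, hence $\gamma$-null, contradicting that $U$ absorbs $\gamma$-null sequences. The use of metrisability is what lets me characterize failure of a relative neighbourhood by a convergent sequence; without it one would only get nets.

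For case (ii), where $(X,\gamma)$ is assumed to be a Mackey--Mazur space, the strategy is to invoke the defining property of Mazur spaces directly: in a Mazur space every sequentially continuous linear functional is continuous, which combined with the Mackey property should yield C-sequentiality essentially by the cited \cite[Corollary 7.6, p.~52]{wilansky1981}. Here I would appeal to \prettyref{prop:c_sequential}~(c): a space is C-sequential iff sequentially continuous linear maps into arbitrary Hausdorff locally convex targets are continuous. The Mackey--Mazur hypothesis is precisely tailored so that sequential continuity of functionals (the Mazur part) together with the Mackey topology being the finest admissible one forces ordinary continuity, so this case is largely a citation to Wilansky's classification once the hypotheses are matched up.

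The main obstacle I anticipate is case (i): converting the sequential absorption property of $U$ into a genuine relative $\tau$-neighbourhood on each ball $nB_{\|\cdot\|}$. The subtle point is that $\mathcal{U}^{+}$ is defined via sequences, while $\gamma$-neighbourhoods are defined via relative $\tau$-neighbourhoods on balls (\cite[I.1.8 Lemma p.~8]{cooper1978}); bridging these two requires exactly the metrisability of $\tau_{\mid B_{\|\cdot\|}}$ so that a set failing to be a relative neighbourhood admits a witnessing convergent sequence. I would be careful to check that the extracted sequence stays $\|\cdot\|$-bounded (which is automatic since it lies in $nB_{\|\cdot\|}$) so that $\tau$-convergence genuinely upgrades to $\gamma$-convergence via \prettyref{rem:mixed_top_bi_cont}~(a). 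Once that bridge is in place, both cases close cleanly through \prettyref{prop:c_sequential}.
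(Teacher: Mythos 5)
Your proposal is correct, but it is genuinely different from what the paper does: the paper gives no proof of \prettyref{prop:mixed_topology_sequential} at all, recalling it from \cite[Proposition 5.7, p.~2681-2682]{kruse_meichnser_seifert2018} for case (i) and from \cite[Corollary 7.6, p.~52]{wilansky1981} for case (ii). For case (i) you reconstruct a self-contained argument: reduce C-sequentiality to $\gamma^{+}=\gamma$ via \prettyref{prop:c_sequential}, then use metrisability of $\tau$ on $nB_{\|\cdot\|}$ (inherited from $B_{\|\cdot\|}$ by scaling) to extract, from any $U\in\mathcal{U}^{+}$ whose trace on some ball fails to be a relative $\tau$-neighbourhood of zero, a norm-bounded $\tau$-null sequence avoiding $U$; such a sequence is $\gamma$-null by \prettyref{rem:mixed_top_bi_cont} (a), contradicting $U\in\mathcal{U}^{+}$. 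This is sound, and you correctly isolate the crux, namely that an absolutely convex set whose trace on every ball $nB_{\|\cdot\|}$ is a relative $\tau$-neighbourhood of zero must be a $\gamma$-neighbourhood of zero. Only your citation for this bridging fact is slightly off target: \cite[I.1.8 Lemma, p.~8]{cooper1978} concerns linear maps; what you actually need is Wiweger's neighbourhood-basis description of $\gamma$ (see \prettyref{rem:mixed_top} (a) and \cite[Lemmas 2.2.1, 2.2.2, p.~51]{wiweger1961}): choosing $\tau$-neighbourhoods $V_{n}$ of zero with $V_{n}\cap nB_{\|\cdot\|}\subset U$, the basic $\gamma$-neighbourhood $\bigcup_{n}\sum_{k=1}^{n}\bigl(2^{-k}V_{2^{k}k}\cap kB_{\|\cdot\|}\bigr)$ is contained in $U$ by absolute convexity, so $U$ is indeed a $\gamma$-neighbourhood of zero. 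For case (ii) you defer, exactly as the paper does, to Wilansky, and your sketch is the right mechanism: the Mazur property yields $(X,\gamma^{+})'=(X,\gamma)'$ because $\gamma^{+}$ and $\gamma$ have the same convergent sequences, and the Mackey--Arens theorem then forces $\gamma^{+}\leq\mu(X,(X,\gamma)')=\gamma$. What your route buys is a proof of (i) that makes the paper self-contained and shows precisely where metrisability enters (it converts failure of a relative neighbourhood into a witnessing sequence); what the paper's route buys is brevity.
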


Recall that a Hausdorff locally convex space $(X,\tau)$ with scalar field $\mathbb{K}=\R$ or $\C$ is called \emph{Mazur space} 
(see \cite[p.~40]{wilansky1981}) or \emph{weakly semi-bornological} (see \cite[p.~337]{beatty1996}) if 
\[
(X,\tau)'=\{ y\,\colon\, X\to\mathbb{K}\;|\;y\;\text{is}\;\tau\text{-sequentially continuous}\}.
\]
Due to \prettyref{prop:c_sequential} every C-sequential space is a Mazur space. 
Mackey--Mazur spaces are also called \emph{semi-bornological spaces} (see \cite[p.~337]{beatty1996}). 

\begin{thm}\label{thm:mixed_cont}
Let $(X,\|\cdot\|,\tau)$ be a triple satisfying \prettyref{ass:standard}, 
$\gamma:=\gamma(\|\cdot\|,\tau)$ the mixed and $\gamma_{s}:=\gamma_{s}(\|\cdot\|,\tau)$ the submixed topology
and $(T(t))_{t\geq 0}$ a $\tau$-bi-continuous semigroup. 
\begin{enumerate}
\item[(a)] If $(X,\gamma)$ is C-sequential, then $T(t)$ is $\gamma$-continuous for every $t\geq 0$. 
\item[(b)] If $(X,\gamma)$ is C-sequential and $\gamma=\gamma_s$, 
then $T(t)$ is $(\|\cdot\|,\tau)$-tight for every $t\geq 0$. 
\end{enumerate}
\end{thm}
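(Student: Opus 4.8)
The plan is to reduce both parts to results already in place, since the conceptual work has been front-loaded into \prettyref{prop:c_sequential} and \prettyref{prop:mixed_cont}. Fix $t\geq 0$; it suffices to treat a single $T(t)$.

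For part (a), the first step is to observe that each $T(t)$ is \emph{sequentially} $\gamma$-continuous. Indeed, if $x_{n}\to x$ in $\gamma$, then by \prettyref{rem:mixed_top_bi_cont} (a) the sequence $(x_{n})_{n\in\N}$ is $\|\cdot\|$-bounded and $\tau$-convergent to $x$. Local bi-equicontinuity (\prettyref{defn:bi_continuous} (iv)) then yields $T(t)(x_{n}-x)\to 0$ in $\tau$, while $\sup_{n\in\N}\|T(t)x_{n}\|\leq\|T(t)\|\sup_{n\in\N}\|x_{n}\|<\infty$ shows that $(T(t)x_{n})_{n\in\N}$ is $\|\cdot\|$-bounded; applying \prettyref{rem:mixed_top_bi_cont} (a) once more gives $T(t)x_{n}\to T(t)x$ in $\gamma$. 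This is precisely the local sequential $\gamma$-equicontinuity already recorded in that remark. The second step is to invoke C-sequentiality: since $T(t)$ is linear and $(X,\gamma)$ is C-sequential, the equivalence $(a)\Leftrightarrow(c)$ in \prettyref{prop:c_sequential}, applied with $Y=(X,\gamma)$, upgrades sequential $\gamma$-continuity to genuine $\gamma$-continuity. This proves (a).

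For part (b), I would feed the conclusion of (a) into the chain of implications of \prettyref{prop:mixed_cont}. By \prettyref{defn:bi_continuous} each $T(t)$ belongs to $\mathcal{L}(X)$, and by (a) it is $\gamma$-continuous, i.e.~assertion (g) of \prettyref{prop:mixed_cont} holds. Under the additional hypothesis $\gamma=\gamma_{s}$ the final clause of \prettyref{prop:mixed_cont} asserts that for $S=T(t)\in\mathcal{L}(X)$ all seven assertions there are equivalent; in particular (g) implies (c), so $T(t)$ is $(\|\cdot\|,\tau)$-tight. This proves (b).

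There is no deep obstacle here: the only point requiring care is the verification of sequential $\gamma$-continuity, where one must use both that local bi-equicontinuity delivers the $\tau$-convergence of $T(t)(x_{n}-x)$ and that the operator-norm bound preserves $\|\cdot\|$-boundedness, so that \prettyref{rem:mixed_top_bi_cont} (a) can be applied in both directions. Everything else is a direct application of the two cited propositions, with the hypotheses being used exactly to bridge the two gaps at hand: C-sequentiality crosses \emph{sequential} $\gamma$-continuity to $\gamma$-continuity in (a), and $\gamma=\gamma_{s}$ crosses $\gamma$-continuity to tightness in (b).
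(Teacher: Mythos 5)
Your proof is correct and follows essentially the same route as the paper's: C-sequentiality (via \prettyref{prop:c_sequential}) reduces continuity to sequential continuity, the latter follows from local bi-equicontinuity together with the characterization of $\gamma$-convergent sequences in \prettyref{rem:mixed_top_bi_cont} (a), and part (b) is harvested from the equivalences in \prettyref{prop:mixed_cont}. The only cosmetic difference is that the paper verifies sequential continuity of $T(t)\colon(X,\gamma)\to(X,\tau)$ and then upgrades to $\gamma$-continuity via the implication (f) $\Rightarrow$ (g) of \prettyref{prop:mixed_cont}, whereas you verify sequential $\gamma$-$\gamma$-continuity directly, using the bound $\|T(t)x_{n}\|\leq\|T(t)\|_{\mathcal{L}(X)}\sup_{n\in\N}\|x_{n}\|$ to keep the image sequence $\|\cdot\|$-bounded.
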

\begin{proof}
Due to \prettyref{prop:mixed_cont} and since $T(t)\in\mathcal{L}(X)$ for every $t\geq 0$ we only need to 
prove that $T(t)$ is $\gamma$-$\tau$-continuous for every $t\geq 0$.
Due to \prettyref{prop:c_sequential} and $(X,\gamma)$ being C-sequential 
the continuity of $T(t)\colon (X,\gamma)\to(X,\tau)$ for $t\geq 0$ is equivalent 
to $\gamma$-$\tau$-sequential continuity. 
By \prettyref{rem:mixed_top_bi_cont} (a) a sequence is $\gamma$-convergent if and only if it is 
$\tau$-convergent and $\|\cdot\|$-bounded. Thus the bi-equicontinuity of $(T(t))_{t\geq 0}$ implies that 
$T(t)$ is $\gamma$-$\tau$-sequentially continuous for every $t\geq 0$, which proves our claim.
\end{proof}

Note that \prettyref{thm:mixed_cont} asserts a ``pointwise'' statement for every $t\ge0$ whose proof does 
neither require the semigroup property nor the $\tau$-strong-continuity of the trajectories, 
\prettyref{defn:bi_continuous} (ii).
We proceed with an analogue of \prettyref{prop:mixed_cont} for families of linear maps.

\begin{prop}\label{prop:mixed_equicont}
Let $(X,\|\cdot\|,\tau)$ be a Saks space, $\gamma:=\gamma(\|\cdot\|,\tau)$ the mixed 
and $\gamma_{s}:=\gamma_{s}(\|\cdot\|,\tau)$ the submixed topology.
Consider the following assertions for a family $S:=(S(t))_{t\in I}$ of linear maps $X\to X$:
\begin{enumerate}
\item[(a)] $S$ is $\gamma_s$-equicontinuous.
\item[(b)] $S$ is $\gamma_s$-$\tau$-equicontinuous.
\item[(c)] $S$ is $(\|\cdot\|,\tau)$-equitight.
\item[(d)] $S$ is $\tau$-equicontinuous on $\|\cdot\|$-bounded sets, i.e.~the restricted family
$S_{\mid B}:=(S(t)_{\mid B})_{t\in I}$ is $\tau_{\mid B}$-$\tau$-equicontinuous on $B$ for 
every $\|\cdot\|$-bounded set $B\subset X$.
\item[(e)] $S$ is $\tau$-equicontinuous at zero on the $\|\cdot\|$-unit ball $B_{\|\cdot\|}$.
\item[(f)] $S$ is $\gamma$-$\tau$-equicontinuous.
\item[(g)] $S$ is $\gamma$-equicontinuous.
\end{enumerate}
Then $(a)\Rightarrow (b)\Rightarrow (c)\Rightarrow (d)\Leftrightarrow (e)\Leftrightarrow (f)\Leftarrow (g)$. 
Moreover, if $S\subset\mathcal{L}(X)$ and $\sup_{t\in I}\|S(t)\|_{\mathcal{L}(X)}<\infty$, 
then $(f)\Rightarrow (g)$. 
If $S\subset\mathcal{L}(X)$, $\sup_{t\in I}\|S(t)\|_{\mathcal{L}(X)}<\infty$ and $\gamma=\gamma_s$, 
then all seven assertions are equivalent.
\end{prop}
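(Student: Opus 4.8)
The plan is to mirror the proof of \prettyref{prop:mixed_cont} essentially verbatim, since \prettyref{prop:mixed_equicont} is its uniform-in-$t$ analogue. The key observation is that every implication in \prettyref{prop:mixed_cont} was proved by manipulating the defining seminorm estimates, and each such estimate can be made uniform over the index set $I$ provided the constants $C,\widetilde C$ and the seminorms $\widetilde p$ are chosen independently of $t$. Concretely, I would fix the directed system $\mathcal{P}_{\tau}$ generating $\tau$ and satisfying \eqref{eq:saks}, and then re-run the seven-step chain.

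\medskip

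For the forward chain $(a)\Rightarrow(b)\Rightarrow(c)\Rightarrow(d)\Leftrightarrow(e)\Leftrightarrow(f)\Leftarrow(g)$, the implications $(a)\Rightarrow(b)$ and $(g)\Rightarrow(f)$ remain obvious since $\tau\leq\gamma_s\leq\gamma$. For $(b)\Rightarrow(c)$, $\gamma_s$-$\tau$-equicontinuity yields, for each $p\in\mathcal{P}_{\tau}$, a single seminorm $\vertiii{\cdot}_{(p_n),(a_n)}$ and constant $C>0$ with $p(S(t)x)\leq C\vertiii{x}_{(p_n),(a_n)}$ for \emph{all} $t\in I$ and $x\in X$; the splitting of the supremum at a threshold $N$ with $a_n^{-1}\leq\varepsilon/C$ then gives equitightness uniformly, exactly as before. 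The implication $(c)\Rightarrow(e)$ proceeds pointwise in the same way, the constants $\widetilde p,C$ being $t$-independent by the definition of equitightness. The equivalences $(d)\Leftrightarrow(e)\Leftrightarrow(f)$ follow from the equicontinuity analogues of \cite[I.1.7 Corollary, I.1.8 Lemma, p.~8]{cooper1978}; I would either cite these in their family form or note that the cited lemmas apply to arbitrary families, the uniformity in $t$ being built into the $\varepsilon$-$\delta$ quantifiers.

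\medskip

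The implication $(f)\Rightarrow(g)$ under the extra hypotheses is where the uniform norm bound enters and is the one step genuinely distinct from \prettyref{prop:mixed_cont}. There, $(f)\Rightarrow(g)$ used $\|Sx\|\leq\|S\|_{\mathcal{L}(X)}$ on $B_{\|\cdot\|}$ to land $Sx$ in a fixed multiple of the unit ball. Here I would instead invoke $M:=\sup_{t\in I}\|S(t)\|_{\mathcal{L}(X)}<\infty$: starting from $(e)$, for each $p$ and $\varepsilon>0$ there are $\widetilde p$ and $\delta>0$ with $p(S(t)x)\leq\varepsilon$ for all $t\in I$ and all $x\in U_{\widetilde p}(0;\delta)\cap B_{\|\cdot\|}$, while simultaneously $S(t)x\in M B_{\|\cdot\|}$ for all such $x$ and $t$. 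Thus $S(t)\bigl(U_{\widetilde p}(0;\delta)\cap B_{\|\cdot\|}\bigr)\subset U_{p}(0;\varepsilon)\cap M B_{\|\cdot\|}$ for every $t\in I$, and the cited \cite[I.1.7 Corollary, I.1.8 Lemma, p.~8]{cooper1978} characterisation of $\gamma$-equicontinuity on bounded sets yields $\gamma$-equicontinuity of the whole family. The final assertion is immediate: if $\gamma=\gamma_s$ then $(g)\Rightarrow(a)$ trivially, closing the cycle and making all seven equivalent.

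\medskip

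The main obstacle I anticipate is purely bookkeeping rather than conceptual: ensuring that the Cooper lemmas \cite[I.1.7 Corollary, I.1.8 Lemma, p.~8]{cooper1978}, which are stated for a single map in the proof of \prettyref{prop:mixed_cont}, are applied in a form that genuinely delivers uniformity over $t\in I$. The safe route is to replace every appeal to ``$S$ is $\tau$-continuous at zero on $B_{\|\cdot\|}$'' by its equicontinuous analogue ``there exists a single $\widetilde p,\delta$ working for all $t$'' and to observe that the geometric characterisation of equicontinuity (a single zero-neighbourhood $U_1$ mapping into a fixed $U_2$ for all members of the family) is exactly what the uniform norm bound $M$ supplies on the bounded-set restriction. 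Once this uniformity is tracked, the proof is the word-for-word equicontinuous transcription of \prettyref{prop:mixed_cont}.
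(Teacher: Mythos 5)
Your proposal is correct and coincides with the paper's own argument: the paper proves \prettyref{prop:mixed_equicont} precisely by noting that it ``follows analogously to \prettyref{prop:mixed_cont} since \cite[I.1.7 Corollary p.~8]{cooper1978} and \cite[I.1.8 Lemma p.~8]{cooper1978} cover the equicontinuous case as well,'' which is exactly your strategy of transcribing the seminorm estimates uniformly in $t\in I$ and replacing the single norm bound $\|S\|_{\mathcal{L}(X)}$ by $\sup_{t\in I}\|S(t)\|_{\mathcal{L}(X)}$ in the step $(f)\Rightarrow(g)$. Your version merely spells out the uniformity bookkeeping that the paper leaves implicit.
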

\begin{proof}
This follows analogously to \prettyref{prop:mixed_cont} since \cite[I.1.7 Corollary p.~8]{cooper1978} and 
\cite[I.1.8 Lemma p.~8]{cooper1978} cover the equicontinuous case as well.
\end{proof}

As an application of the preceding proposition we get our main theorem. 

\begin{thm}\label{thm:mixed_equicont_main}
Let $(X,\|\cdot\|,\tau)$ be a triple satisfying \prettyref{ass:standard}, 
$\gamma:=\gamma(\|\cdot\|,\tau)$ the mixed and $\gamma_{s}:=\gamma_{s}(\|\cdot\|,\tau)$ the submixed topology
and $(T(t))_{t\geq 0}$ a $\tau$-bi-continuous semigroup. 
\begin{enumerate}
\item[(a)] If $(X,\gamma)$ is C-sequential, then $(T(t))_{t\geq 0}$ is quasi-$\gamma$-equicontinuous. 
\item[(b)] If $(X,\gamma)$ is C-sequential and $\gamma=\gamma_s$, 
then $(T(t))_{t\geq 0}$ is quasi-$(\|\cdot\|,\tau)$-equitight. 
\end{enumerate}
\end{thm}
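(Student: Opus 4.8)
The plan is to reduce both statements to the single claim that, for a suitable rescaling, the family is $\gamma$-$\tau$-equicontinuous, and then to feed this into \prettyref{prop:mixed_equicont}. First I would fix $\alpha$ with $\alpha>\max\{\omega_{0}(T),0\}$ and pick $\omega\in(\omega_{0}(T),\alpha)$ together with $M\geq1$ such that $\|T(t)\|\leq Me^{\omega t}$ for all $t\geq0$; set $S:=(S(t))_{t\geq0}$ with $S(t):=e^{-\alpha t}T(t)$. Then $\|S(t)\|_{\mathcal{L}(X)}\leq Me^{(\omega-\alpha)t}\leq M$, so $\sup_{t\geq0}\|S(t)\|_{\mathcal{L}(X)}<\infty$. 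By \prettyref{prop:mixed_equicont}, once I know that $S$ is $\gamma$-$\tau$-equicontinuous (assertion (f)) I obtain $\gamma$-equicontinuity (assertion (g)) from the norm bound, which is exactly quasi-$\gamma$-equicontinuity of $(T(t))_{t\geq0}$, proving (a). For (b) I additionally have $\gamma=\gamma_{s}$, so \prettyref{prop:mixed_equicont} makes all seven assertions equivalent; in particular $\gamma$-equicontinuity of $S$ yields $(\|\cdot\|,\tau)$-equitightness of $S$, i.e.\ quasi-$(\|\cdot\|,\tau)$-equitightness of $(T(t))_{t\geq0}$. Thus everything reduces to verifying (f) for $S$.

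To get $\gamma$-$\tau$-equicontinuity I would exploit C-sequentiality through the auxiliary topology $\gamma^{+}$. Since $(X,\gamma)$ is C-sequential, \prettyref{prop:c_sequential} gives $\gamma^{+}=\gamma$, so the collection $\mathcal{U}^{+}$ of absolutely convex sets $U$ with the property that every $\gamma$-null sequence is eventually contained in $U$ is a basis of $\gamma$-neighbourhoods of $0$. Fix an absolutely convex $\tau$-neighbourhood $V$ of $0$ and put $U:=\bigcap_{t\geq0}S(t)^{-1}(V)$, which is absolutely convex because each $S(t)$ is linear and $V$ is absolutely convex. It then suffices to show $U\in\mathcal{U}^{+}$: this makes $U$ a $\gamma$-neighbourhood of $0$ with $S(t)(U)\subset V$ for all $t\geq0$, which is precisely (f).

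The heart of the argument -- and the step I expect to be the main obstacle -- is the verification that $U$ eventually absorbs every $\gamma$-null sequence, since local bi-equicontinuity only controls $t$ on compact intervals whereas (f) demands uniform control over all $t\geq0$. Let $(x_{n})_{n\in\N}$ be $\gamma$-null; by \prettyref{rem:mixed_top_bi_cont} it is $\tau$-null and $\|\cdot\|$-bounded, say $\|x_{n}\|\leq R$. As $\tau\leq\tau_{\|\cdot\|}$, the $\tau$-neighbourhood $V$ contains a norm ball $\delta B_{\|\cdot\|}$ for some $\delta>0$. I would split $[0,\infty)$ at a threshold $t_{0}$: for the tail I use the exponential estimate $\|S(t)x_{n}\|\leq Me^{(\omega-\alpha)t}R$, which is $\leq\delta$ for all $t\geq t_{0}$ and all $n$ once $t_{0}$ is chosen large (possible since $\omega-\alpha<0$), so that $S(t)x_{n}\in\delta B_{\|\cdot\|}\subset V$ there; for $t\in[0,t_{0}]$ local bi-equicontinuity yields an $N_{0}$ with $T(t)x_{n}\in V$ for all $n\geq N_{0}$ and all $t\in[0,t_{0}]$, and since $0<e^{-\alpha t}\leq1$ and $V$ is balanced this gives $S(t)x_{n}\in V$ as well. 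Hence $x_{n}\in U$ for all $n\geq N_{0}$, so $(x_{n})$ is eventually in $U$ and $U\in\mathcal{U}^{+}$, which completes the proof. The two constraints on $\alpha$ are exactly what makes the splitting work: $\alpha\geq0$ keeps the rescaling factor $e^{-\alpha t}\leq1$ so that balancedness of $V$ can be invoked on $[0,t_{0}]$, and $\alpha>\omega$ forces the norm tail to decay uniformly in $n$; both are harmless because quasi-equicontinuity only asks for the existence of some $\alpha$.
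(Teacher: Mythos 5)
Your proof is correct, and it takes a genuinely different route from the paper's. The paper proves part (a) by a chain of citations to the SCLE-semigroup machinery of \cite{kraaij2016} (Theorem 7.4 there, combined with \prettyref{prop:c_sequential}, for local $\gamma$-equicontinuity, then Corollary 4.7, Proposition 7.3 and Corollary 6.5 there to pass to quasi-$\gamma$-equicontinuity); only part (b) is argued as you do, namely by rescaling so that the family is simultaneously $\gamma$-equicontinuous and norm-bounded and then applying \prettyref{prop:mixed_equicont} with $\gamma=\gamma_{s}$. Your argument for (a) is instead self-contained: you express C-sequentiality through Webb's topology $\gamma^{+}$, i.e.~through the basis $\mathcal{U}^{+}$, take $U:=\bigcap_{t\geq0}S(t)^{-1}(V)$ for an absolutely convex $\tau$-neighbourhood $V$ of zero, and verify $U\in\mathcal{U}^{+}$ by splitting time: the tail $t\geq t_{0}$ is absorbed because $V$ contains a norm ball and the rescaled semigroup decays in operator norm (this is where $\omega<\alpha$ enters), while the compact part $[0,t_{0}]$ is handled by local bi-equicontinuity together with balancedness of $V$ and $e^{-\alpha t}\leq 1$. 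This is in effect a strengthening of the paper's own ``simpler and independent proof'', \prettyref{prop:mixed_equicont_main}, which reaches the same conclusions only under the stronger hypothesis that $\tau$ is metrisable on $B_{\|\cdot\|}$; your time-splitting plus the $\mathcal{U}^{+}$-characterisation replaces both the metric-based contradiction argument there and the appeal to global bi-equicontinuity from \cite[Proposition 1.4 (b), p.~8]{kuehnemund2001}. What the paper's route buys is brevity by leaning on known results; what yours buys is a proof of \prettyref{thm:mixed_equicont_main} in its full C-sequential generality that avoids the ``quite involved theory'' of \cite{kraaij2016}, a reliance the paper itself flags as a drawback.
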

\begin{proof}
Part (a) is just a combination of results that are already known. 
If $(X,\gamma)$ is C-sequential, then $(T(t))_{t\geq 0}$ is an \emph{SCLE semigroup} w.r.t.~$\gamma$ 
by \cite[Theorem 7.4, p.~180]{kraaij2016} combined with \prettyref{prop:c_sequential}, which means 
that the semigroup $(T(t))_{t\geq 0}$ is $\gamma$-strongly continuous and locally $\gamma$-equicontinuous 
(see \cite[p.~160]{kraaij2016}). Due to \cite[Corollary 4.7, p.~165]{kraaij2016}, 
\cite[Definition 4.8, p.~165]{kraaij2016}, \cite[Proposition 7.3, p.~179]{kraaij2016} in combination 
with $(X,\gamma)$ being C-sequential and \cite[Corollary 6.5, p.~176]{kraaij2016} 
the semigroup $(T(t))_{t\geq 0}$ is quasi-$\gamma$-equicontinuous. 

Let us turn to part (b). By part (a) there is $\alpha_{1}\in\R$ such that the rescaled semigroup
$(e^{-\alpha_{1} t}T(t))_{t\geq 0}$ is $\gamma$-equicontinuous. 
Choosing $\alpha_{2}>\omega_{0}(T)$, we note that 
$(e^{-\alpha_{2} t}T(t))_{t\geq 0}$ is bounded w.r.t.~$\|\cdot\|_{\mathcal{L}(X)}$ by 
the exponential boundedness of $(T(t))_{t\geq 0}$. Hence we get with $\alpha:=\max(\alpha_{1},\alpha_{2})$ that
$(S(t))_{t\geq 0}:=(e^{-\alpha t}T(t))_{t\geq 0}$ is $\gamma$-equicontinuous and 
$\sup_{t\geq 0}\|S(t)\|_{\mathcal{L}(X)}<\infty$. It follows from \prettyref{prop:mixed_equicont} 
that $(S(t))_{t\geq 0}$ is $(\|\cdot\|,\tau)$-equitight and thus $(T(t))_{t\geq 0}$ is 
quasi-$(\|\cdot\|,\tau)$-equitight.
\end{proof}

Part (b) gives a partial answer to the question raised in \cite[Remark 2.5, p.~92-93]{farkas2004}, namely, 
which other $\tau$-bi-continuous semigroups apart from the $\tau_{\operatorname{co}}$-bi-continuous semigroups 
on $\mathrm{C}_{\operatorname{b}}(\Omega)$, with $\Omega$ Polish, are locally $(\|\cdot\|,\tau)$-equitight. We remark that
the latter semigroups are actually quasi-$(\|\cdot\|_{\infty},\tau_{\operatorname{co}})$-equitight as well 
(see \prettyref{thm:strict_topo_quasi_equicont} below).

The proof of \prettyref{thm:mixed_equicont_main} heavily relies on the quite involved theory of \cite{kraaij2016}. 
Next, we present a simpler and independent proof. On the one hand this proof only works under the stronger assumption 
that $\tau$ is metrisable on the $\|\cdot\|$-unit ball $B_{\|\cdot\|}$, on the other hand this stronger condition 
is actually the one that we check in almost every example to get that $(X,\gamma)$ is C-sequential 
(see \prettyref{rem:mixed_equicont}).
We state this special case of \prettyref{thm:mixed_equicont_main} in the following proposition.
\begin{prop}\label{prop:mixed_equicont_main}
Let $(X,\|\cdot\|,\tau)$ be a triple satisfying \prettyref{ass:standard}, 
$\gamma:=\gamma(\|\cdot\|,\tau)$ the mixed and $\gamma_{s}:=\gamma_{s}(\|\cdot\|,\tau)$ the submixed topology
and $(T(t))_{t\geq 0}$ a $\tau$-bi-continuous semigroup. 
\begin{enumerate}
\item[(a)] If $\tau$ is metrisable on the $\|\cdot\|$-unit ball $B_{\|\cdot\|}$, 
then $(T(t))_{t\geq 0}$ is quasi-$(\|\cdot\|,\tau)$-equicontinuous. 
\item[(b)] If $\tau$ is metrisable on the $\|\cdot\|$-unit ball $B_{\|\cdot\|}$ and $\gamma=\gamma_s$, 
then $(T(t))_{t\geq 0}$ is quasi-$(\|\cdot\|,\tau)$-equitight. 
\end{enumerate}
\end{prop}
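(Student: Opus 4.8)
The plan is to establish part (a) by an elementary and self-contained argument (avoiding the SCLE-machinery of \cite{kraaij2016} that drives \prettyref{thm:mixed_equicont_main}) and then to read off part (b) from part (a) verbatim as in the proof of \prettyref{thm:mixed_equicont_main}~(b). Fix $M\geq 1$ and $\omega\in\R$ with $\|T(t)\|\leq Me^{\omega t}$ for all $t\geq 0$ and fix any $\alpha>\omega$; then automatically $\alpha>\omega_{0}(T)$ and $\sup_{t\geq 0}\|e^{-\alpha t}T(t)\|_{\mathcal{L}(X)}\leq M$. By the seminorm characterisation of equicontinuity for families of linear maps, part (a) amounts to producing, for each $\gamma$-continuous seminorm $q_{0}$, a single $\gamma$-continuous seminorm that dominates $q_{0}(e^{-\alpha t}T(t)\,\cdot\,)$ uniformly in $t\geq 0$. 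My candidate is the orbit seminorm
\[
\bar q(x):=\sup_{t\geq 0}e^{-\alpha t}q_{0}(T(t)x),\qquad x\in X.
\]
Because $q_{0}$ is $\gamma$-continuous and $\gamma\leq\tau_{\|\cdot\|}$, there is $C\geq 0$ with $q_{0}\leq C\|\cdot\|$, so $e^{-\alpha t}q_{0}(T(t)x)\leq CMe^{(\omega-\alpha)t}\|x\|\leq CM\|x\|$; hence $\bar q$ is a genuine (finite) seminorm with $\bar q\leq CM\|\cdot\|$, and $q_{0}(e^{-\alpha t}T(t)x)\leq\bar q(x)$ holds by construction. Thus part (a) reduces entirely to the $\gamma$-continuity of $\bar q$.

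The crux of the proof is to show that $\bar q$ is sequentially $\gamma$-continuous. By \prettyref{rem:mixed_top_bi_cont}~(a), a $\gamma$-null sequence is exactly a $\|\cdot\|$-bounded $\tau$-null sequence, so I take $(x_{n})_{n\in\N}$ with $\|x_{n}\|\leq R$ that $\tau$-converges to $0$ and must prove $\bar q(x_{n})\to 0$. The real obstacle is that the supremum in $\bar q$ runs over all $t\geq 0$, whereas local bi-equicontinuity (\prettyref{defn:bi_continuous}~(iv)) only controls $T(t)$ for $t$ in a compact interval; the exponential boundedness and the rescaling by $\alpha>\omega$ are what bridge this gap. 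Concretely I would split the supremum at a threshold $T_{0}$. On the tail $t>T_{0}$ one has $e^{-\alpha t}q_{0}(T(t)x_{n})\leq CMRe^{(\omega-\alpha)t}\leq CMRe^{(\omega-\alpha)T_{0}}$ uniformly in $n$, which is made as small as desired by taking $T_{0}$ large since $\omega-\alpha<0$. On the head $t\in[0,T_{0}]$ all vectors $T(t)x_{n}$ lie in the fixed $\|\cdot\|$-bounded set $Me^{\omega T_{0}}R\,B_{\|\cdot\|}$, where $\gamma$ and $\tau$ agree; local bi-equicontinuity makes $T(t)x_{n}$ $\tau$-converge to $0$ uniformly for $t\in[0,T_{0}]$, and together with $\gamma$-continuity of $q_{0}$ this forces $\sup_{t\in[0,T_{0}]}q_{0}(T(t)x_{n})\to 0$ (otherwise a diagonal choice $t_{n}\in[0,T_{0}]$ would yield a $\gamma$-null sequence $(T(t_{n})x_{n})_{n}$ on which $q_{0}$ stays bounded away from $0$, contradicting \prettyref{rem:mixed_top_bi_cont}~(a)); the bounded factor $e^{-\alpha t}$ on $[0,T_{0}]$ is harmless. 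Letting first $T_{0}$ and then $n$ go to infinity gives $\bar q(x_{n})\to 0$.

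It remains to turn sequential $\gamma$-continuity of $\bar q$ into $\gamma$-continuity, and this is the only place the hypothesis is used. By \prettyref{prop:mixed_topology_sequential}~(i) the metrisability of $\tau$ on $B_{\|\cdot\|}$ makes $(X,\gamma)$ C-sequential. The sublevel set $\{x\in X\;|\;\bar q(x)<1\}$ is absolutely convex and, by the sequential continuity just shown, sequentially $\gamma$-open; being convex and sequentially open in a C-sequential space, it is $\gamma$-open, so $\bar q$ is continuous at $0$ and hence $\gamma$-continuous. This proves that $(e^{-\alpha t}T(t))_{t\geq 0}$ is $\gamma$-equicontinuous, i.e.\ that $(T(t))_{t\geq 0}$ is quasi-$\gamma$-equicontinuous, which is part (a).

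Finally, part (b) follows from part (a) exactly as in \prettyref{thm:mixed_equicont_main}~(b): the very same family $(e^{-\alpha t}T(t))_{t\geq 0}$ is $\gamma$-equicontinuous and satisfies $\sup_{t\geq 0}\|e^{-\alpha t}T(t)\|_{\mathcal{L}(X)}\leq M<\infty$, so under the additional assumption $\gamma=\gamma_{s}$ \prettyref{prop:mixed_equicont} upgrades $\gamma$-equicontinuity to $(\|\cdot\|,\tau)$-equitightness, whence $(T(t))_{t\geq 0}$ is quasi-$(\|\cdot\|,\tau)$-equitight.
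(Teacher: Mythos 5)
Your proof is correct, but it takes a genuinely different route from the paper's. The paper rescales by $\alpha>\omega_{0}(T)$, invokes K\"uhnemund's result that the rescaled semigroup $(e^{-\alpha t}T(t))_{t\geq 0}$ is \emph{globally} $\tau$-bi-equicontinuous, and then uses the metric $\d$ on $B_{\|\cdot\|}$ directly: assuming failure of $\tau$-equicontinuity at zero on the unit ball, it extracts sequences $(x_{n})$ in $B_{\|\cdot\|}$ and $(t_{n})$ in $[0,\infty)$ with $\d(x_{n},0)<\tfrac{1}{n}$ and $\d(e^{-\alpha t_{n}}T(t_{n})x_{n},0)>\varepsilon$, contradicting global bi-equicontinuity; \prettyref{prop:mixed_equicont} (the implications $(e)\Rightarrow(g)$, and with $\gamma=\gamma_{s}$ also $\Rightarrow(c)$) then yields both parts. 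You instead construct the explicit orbit seminorm $\bar q$, prove its sequential $\gamma$-continuity by splitting the supremum (tail controlled by exponential decay since $\alpha>\omega$, head by local bi-equicontinuity together with \prettyref{rem:mixed_top_bi_cont} (a)), and upgrade sequential continuity to continuity via C-sequentiality of $(X,\gamma)$, which you obtain from metrisability through \prettyref{prop:mixed_topology_sequential} (i); you only need \prettyref{prop:mixed_equicont} for part (b). Your route buys two things: it avoids the citation of K\"uhnemund's global bi-equicontinuity result (the head/tail splitting replaces it), and, more significantly, it uses metrisability \emph{only} to obtain C-sequentiality, so your argument in fact proves the stronger \prettyref{thm:mixed_equicont_main} under the sole hypothesis that $(X,\gamma)$ is C-sequential, giving an elementary alternative to the proof relying on the machinery of \cite{kraaij2016} --- whereas the paper's metric contradiction argument genuinely needs metrisability, which is exactly why the paper presents it as the ``simpler proof under a stronger assumption''. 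What the paper's proof buys in return is brevity, given the cited external results. Two cosmetic points in your write-up, neither of which affects correctness: for $\omega<0$ the head bound should read $M\max(1,e^{\omega T_{0}})R\,B_{\|\cdot\|}$ rather than $Me^{\omega T_{0}}R\,B_{\|\cdot\|}$, and your ``diagonal choice'' implicitly passes to a subsequence before choosing the $t_{n}$.
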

\begin{proof}
Since $\tau$ is metrisable on $B_{\|\cdot\|}$, there is a metric 
$\d\colon B_{\|\cdot\|}\times B_{\|\cdot\|}\to[0,\infty)$ which induces the $\tau$-topology on $B_{\|\cdot\|}$. 
We choose $\alpha>\omega_{0}(T)$ and note that the rescaled semigroup 
$(S(t))_{t\geq 0}:=(e^{-\alpha t}T(t))_{t\geq 0}$ 
is globally $\tau$-bi-equicontinuous on $[0,\infty)$ and $\sup_{t\geq 0}\|S(t)\|_{\mathcal{L}(X)}<\infty$ 
by \cite[Proposition 1.4 (b), p.~8]{kuehnemund2001} and the exponential boundedness of $(T(t))_{t\geq 0}$.
Due to \prettyref{prop:mixed_equicont} we only need 
to show that $(S(t))_{t\geq 0}$ is $\gamma$-equicontinuous at zero on $B_{\|\cdot\|}$. 
We prove this by contradiction. Suppose that $(S(t))_{t\geq 0}$ is not $\gamma$-equicontinuous at zero 
on $B_{\|\cdot\|}$. This implies that there are $\varepsilon>0$, a sequence $(x_{n})_{n\in\N}$ 
in $B_{\|\cdot\|}$ and a sequence $(t_{n})_{n\in\N}$ in $[0,\infty)$ such that 
$\d(x_{n},0)<\tfrac{1}{n}$ and $\d(S(t_{n})x_{n},0)>\varepsilon$ for all $n\in\N$. We deduce 
that $(x_{n})_{n\in\N}$ is $\|\cdot\|$-bounded and $\tau$-convergent to zero and that 
$\tau$-$\lim_{n\to\infty}S(t_{n})x_{n}\neq 0$. Thus $(S(t))_{t\geq 0}$ is not globally $\tau$-bi-equicontinuous, 
which is a contradiction.
\end{proof}

Let us turn to the sufficient conditions from \prettyref{prop:mixed_topology_sequential} ensuring that 
$(X,\gamma)$ is C-sequential.

\begin{rem}\label{rem:mixed_equicont}
\begin{enumerate}
\item[(a)] Let $\Omega$ be a completely regular Hausdorff space and 
$X:=\mathrm{C}_{\operatorname{b}}(\Omega)$. 
Then the compact-open topology $\tau_{\operatorname{co}}$ is metrisable on $B_{\|\cdot\|_{\infty}}$ if and only if 
$\Omega$ is \emph{hemicompact} by \cite[Remark II.1.24 1), p.~88]{cooper1978} and \prettyref{defn:mixed_top_Saks} (a).
We recall that a Hausdorff space $\Omega$ is called hemicompact 
if there is a sequence $(K_{n})_{n\in\N}$ of compact sets in $\Omega$ 
such that for every compact set $K\subset\Omega$ there is $N\in\N$ such that $K\subset K_{N}$ 
\cite[Exercises 3.4.E, p.~165]{engelking1989}.
In particular, $(\mathrm{C}_{\operatorname{b}}(\Omega),\gamma)$ is C-sequential if $\Omega$ is hemicompact by 
\prettyref{prop:mixed_topology_sequential} (i) where 
$\gamma:=\gamma(\|\cdot\|_{\infty},\tau_{\operatorname{co}})=\beta_{0}$. 

Furthermore, $(\mathrm{C}_{\operatorname{b}}(\Omega),\gamma)$ is also a C-sequential space 
if $\Omega$ is a Polish space 
by \cite[Theorem 2.4 (a), p.~316]{sentilles1972}, \cite[Theorem 9.1 (a), p.~332]{sentilles1972}, 
\cite[Theorem 8.1 (a), p.~330]{sentilles1972} and \prettyref{prop:c_sequential}. 
\item[(b)] Let $X$ be a Banach space. Then the weak$^{\ast}$-topology $\sigma(X',X)$ is metrisable 
on $B_{\|\cdot\|_{X'}}$ if and only if $X$ is separable by \cite[V.5.2 Theorem, p.~426]{dunford1958}.
In particular, $(X',\gamma)$ is C-sequential if $X$ is separable by 
\prettyref{prop:mixed_topology_sequential} (i) 
where $\gamma:=\gamma(\|\cdot\|_{X'},\sigma(X',X))=\tau_{\operatorname{c}}(X',X)$. 
\item[(c)] Let $(X,\|\cdot\|)$ be a Banach space. 
Then the dual Mackey topology $\mu(X',X)$ is metrisable on $B_{\|\cdot\|_{X'}}$ 
if and only if $X$ is an \emph{SWCG space} by \cite[2.1 Theorem, p.~388-389]{schluechtermann1988}. 
We recall that a Banach space $(X,\|\cdot\|)$ is called strongly weakly compactly generated (SWCG) if there exists 
a $\sigma(X,X')$-compact set $K\subset X$ such that for every $\sigma(X,X')$-compact set $L\subset X$ 
and $\varepsilon>0$ there is $n\in\N$ with $L\subset (nK+\varepsilon B_{\|\cdot\|})$ by
\cite[p.~387]{schluechtermann1988}. 
In particular, $(X',\gamma)$ is C-sequential if $X$ is an SWCG space by 
\prettyref{prop:mixed_topology_sequential} (i) where $\gamma:=\gamma(\|\cdot\|_{X'},\mu(X',X))=\mu(X',X)$. 
\item[(d)] Let $X,Y$ be Banach spaces. 
Then the weak operator topology $\tau_{\operatorname{wot}}$ is metrisable on 
$B_{\|\cdot\|_{\mathcal{L}(X;Y)}}$ if $X$ and $Y'$ are separable 
($\tau_{\operatorname{wot}}$ is metrisable on $B_{\|\cdot\|_{\mathcal{L}(X;Y)}}$ by 
$\d(S,T):=\sum_{m,n=1}^{\infty}2^{-(m+n)}|\langle y_{m}',Tx_{n}-Sx_{n}\rangle|$ for
$S,T\in B_{\|\cdot\|_{\mathcal{L}(X;Y)}}$ where $(x_{n})_{n\in\N}$ 
is a $\|\cdot\|_{X}$-dense sequence in $B_{\|\cdot\|_{X}}$ and $(y_{m}')_{m\in\N}$ a $\|\cdot\|_{Y'}$-dense 
sequence in $B_{\|\cdot\|_{Y'}}$).
In particular, $(\mathcal{L}(X;Y),\gamma)$ is C-sequential if $X$ and $Y'$ are separable by 
\prettyref{prop:mixed_topology_sequential} (i) where 
$\gamma:=\gamma(\|\cdot\|_{\mathcal{L}(X;Y)},\tau_{\operatorname{wot}})$. 

The strong operator topology $\tau_{\operatorname{sot}}$ is metrisable on 
$B_{\|\cdot\|_{\mathcal{L}(X;Y)}}$ if $X$ is separable ($\tau_{\operatorname{sot}}$ is metrisable on 
$B_{\|\cdot\|_{\mathcal{L}(X;Y)}}$ by 
$\d(S,T):=\sum_{n=1}^{\infty}2^{-n}\|Tx_{n}-Sx_{n}\|_{Y}$ for
$S,T\in B_{\|\cdot\|_{\mathcal{L}(X;Y)}}$ where $(x_{n})_{n\in\N}$ 
is a $\|\cdot\|_{X}$-dense sequence in $B_{\|\cdot\|_{X}}$).
In particular, $(\mathcal{L}(X;Y),\gamma)$ is C-sequential if $X$ is separable by 
\prettyref{prop:mixed_topology_sequential} (i) where 
$\gamma:=\gamma(\|\cdot\|_{\mathcal{L}(X;Y)},\tau_{\operatorname{sot}})$. 
\end{enumerate}
\end{rem}

Concerning example (a), every hemicompact space is $\sigma$-compact 
by \cite[Exercises 3.8.C (a), p.~194]{engelking1989} (and this implication is strict), every 
first-countable hemicompact space is locally compact by \cite[Exercises 3.4.E (a), p.~165]{engelking1989}, 
and a locally compact Hausdorff space is hemicompact if and only if it is $\sigma$-compact by 
\cite[Exercises 3.8.C (b), p.~195]{engelking1989}.

Concerning example (c), examples of SWCG spaces are reflexive Banach spaces, separable Schur spaces, 
the space $\mathcal{N}(H)$ of operators of trace class on a separable Hilbert space $H$ and the space 
$L^{1}(\Omega,\nu)$ of (equivalence classes) of absolutely integrable functions on $\Omega$ 
w.r.t.~a $\sigma$-finite measure $\nu$ by \cite[2.3 Examples, p.~389-390]{schluechtermann1988}.

Let us look at the second sufficient condition from \prettyref{prop:mixed_topology_sequential} that 
also guarantees that $(X,\gamma)$ is C-sequential, namely that $(X,\gamma)$ is a Mackey--Mazur space. 
Some of the spaces we already considered fulfil this one as well. 

\begin{rem}\label{rem:mackey_mazur}
\begin{enumerate}
\item[(a)] Condition (ii) of \prettyref{prop:mixed_topology_sequential} 
is fulfilled in \prettyref{rem:mixed_equicont} (a) 
if $\Omega$ is a hemicompact Hausdorff $k_{\R}$\emph{-space} 
or a Polish space since $(\mathrm{C}_{\operatorname{b}}(\Omega),\beta_{0})$ is C-sequential and
\[
 \gamma(\|\cdot\|_{\infty},\tau_{\operatorname{co}})
=\beta_{0}=\mu(\mathrm{C}_{\operatorname{b}}(\Omega),\mathrm{M}_{\operatorname{t}}(\Omega))
\]
by \cite[Theorem 5.2, p.~884]{mosiman1972} and \cite[Theorem 9.1 (a), (d), p.~332]{sentilles1972}
where $\mathrm{M}_{\operatorname{t}}(\Omega)=(\mathrm{C}_{\operatorname{b}}(\Omega),\beta_{0})'$ 
is the space of bounded Radon measures on $\Omega$ by \cite[Theorem 4.4, p.~320]{sentilles1972}. 
Here we use the definition of a Radon measure given in \cite[p.~543]{kunze2009} where the space 
$\mathrm{M}_{\operatorname{t}}(\Omega)$ is called $\mathcal{M}_{0}(\Omega)$, in \cite[p.~312]{sentilles1972} 
this space is just called $M_{t}$ and the index $t$ stands for \emph{tight}.
We recall that a completely regular space $\Omega$ is called $k_{\R}$-space if any map $f\colon\Omega\to\R$ 
(or equivalently, for any completely regular space $Y$ and any map $f\colon\Omega\to Y$)
whose restriction to each compact $K\subset\Omega$ is continuous, the map is already continuous on $\Omega$ 
(see \cite[p.~487]{michael1973} and \cite[(2.3.7) Proposition, p.~22]{buchwalter1969}). 

With regard to strong Mackey spaces we mentioned in the introduction, 
we note that $(\mathrm{C}_{\operatorname{b}}(\Omega),\beta_{0})$ is a \emph{strong Mackey space}, 
i.e.~$\sigma(\mathrm{M}_{\operatorname{t}}(\Omega),\mathrm{C}_{\operatorname{b}}(\Omega))$-compact subsets 
of $\mathrm{M}_{\operatorname{t}}(\Omega)$ are $\beta_{0}$-equicontinuous (see \cite[p.~317]{sentilles1972}), 
and that $\Omega$ is $\beta$\emph{-simple}, i.e.~the strict topologies $\beta_{0}$, $\beta$ and $\beta_{1}$ 
of Sentilles \cite[p.~314-315]{sentilles1972} coincide on $\mathrm{C}_{\operatorname{b}}(\Omega)$ 
(see \cite[Definition 2.12,p.~877]{mosiman1972}), 
if $\Omega$ is a hemicompact Hausdorff $k_{\R}$-space or a Polish space 
by \cite[Theorem 5.2, p.~884]{mosiman1972} and \cite[Theorems 5.7, 5.8 (b), 9.1 (a), p.~325, 332]{sentilles1972}.
\item[(b)] Clearly, condition (ii) of \prettyref{prop:mixed_topology_sequential} is also fulfilled 
in \prettyref{rem:mixed_equicont} (c) if $X$ is an SWCG space.
\item[(c)] Let $(X,\|\cdot\|)$ be a Banach space. If $X$ is \emph{weakly sequentially complete}, 
i.e.~$(X,\sigma(X,X'))$ is sequentially complete (see \cite[Definition 2.3.10, p.~38]{albiac2006}), 
and has an \emph{almost shrinking basis}, which means that 
$X$ has a Schauder basis such that its associated sequence of coefficient functionals forms a Schauder basis 
of $(X',\mu(X',X))$ (see \cite[p.~75]{kalton1973}), then $(X',\mu(X',X))=(X',\gamma)$ 
with $\gamma:=\gamma(\|\cdot\|_{X'},\mu(X',X))=\mu(X',X)$ 
is a Mackey--Mazur space by \cite[Theorem 7.1, p.~51]{wilansky1981} and thus condition (ii) 
of \prettyref{prop:mixed_topology_sequential} is fulfilled.
\end{enumerate}
\end{rem}

Concerning example (a) above, we will take a closer look at $k_{\R}$-spaces in the next section.
Concerning example (c), SWCG spaces and the space $L^{1}(\Omega,\nu)$ for an arbitrary measure $\nu$ 
are weakly sequentially complete by \cite[2.5 Theorem, p.~390]{schluechtermann1988} and 
\cite[Exercise 5.85, p.~286]{fabian2011}. Moreover, if a Banach space has an unconditional basis 
(see \cite[Definition 3.3.1, p.~51]{albiac2006}), then 
this basis is almost shrinking by \cite[Proposition 4, p.~78]{kalton1973}. The space 
$L^{1}([0,1],\lambda)$, $\lambda$ the Lebesgue measure, has no unconditional basis 
by \cite[Theorem 6.5.3, p.~144]{albiac2006}, 
but the separable Schur, thus SWCG, space $\ell^{1}$ has an unconditional basis, namely, the canonical unit sequences. 
Hence $(\ell^{\infty},\mu(\ell^{\infty},\ell^{1}))$ is a Mackey--Mazur space 
(cf.~\cite[Example 7.3, p.~51]{wilansky1981} and \cite[E.1.1 Proposition, p.~338]{beatty1996}). 
An example of a weakly sequentially complete Banach space with an unconditional basis that is not an SWCG space 
is given in \cite[2.6 Example, p.~391]{schluechtermann1988}.

\begin{rem}\label{rem:assumption_budde2021a}
Let $(X,\|\cdot\|,\tau)$ be a triple satisfying \prettyref{ass:standard}, 
$\gamma:=\gamma(\|\cdot\|,\tau)$ the mixed and $\gamma_{s}:=\gamma_{s}(\|\cdot\|,\tau)$ the submixed topology
and $(T(t))_{t\geq 0}$ a $\tau$-bi-continuous semigroup. 
In \cite[Assumption 2.6, p.~579]{budde2021a} the assumption is made that
\begin{enumerate}
\item[(i)] $(X,\gamma_{s})$ is complete, and 
\item[(ii)] $(T(t))_{t\geq 0}$ locally $\gamma_{s}$-equicontinuous ($\gamma_{s}$ is called $\gamma$ 
in \cite{budde2021a}).
\end{enumerate}
With regard to condition (i) we note that 
the space $(X,\gamma)$ is complete if and only if $B_{\|\cdot\|}$ is $\tau$-complete 
by \cite[I.1.14 Proposition, p.~11]{cooper1978}. In particular, $(X,\gamma)$ is complete 
if $B_{\|\cdot\|}$ is $\tau$-compact, which is a sufficient condition for $\gamma=\gamma_{s}$ 
by \prettyref{rem:mixed=submixed} (b), and fulfilled in \prettyref{ex:mixed=submixed} (b)-(d).

In \prettyref{ex:mixed=submixed} (a) we have $\beta_{0}=\gamma=\gamma_{s}$ as well and 
the space $(X,\gamma_{s})=(\mathrm{C}_{\operatorname{b}}(\Omega),\beta_{0})$ for a completely regular Hausdorff space 
$\Omega$ is complete if and only if $\Omega$ is a $k_{\R}$-space by 
\cite[II.1.9 Corollary, p.~81]{cooper1978} (cf.~\cite[3.6.9 Theorem, p.~72]{jarchow1981}). 
 
In \prettyref{rem:mackey_mazur} (c) the space $(X,\gamma):=(X_{0}',\gamma)=(X_{0}',\mu(X_{0}',X_{0}))$ 
for a Banach space $(X_{0},\|\cdot\|_{0})$ is also complete by \cite[p.~74]{kalton1973}. 
If, in addition, $\gamma_{s}=\mu(X_{0}',X_{0})$ holds, then $(X,\gamma_{s})=(X_{0}',\gamma_{s})$ is complete, too.

Condition (ii) is fulfilled, $(T(t))_{t\geq 0}$ is even quasi-$(\|\cdot\|,\tau)$-equitight, 
by \prettyref{thm:mixed_equicont_main} (b) whenever $(X,\gamma)$ is C-sequential and $\gamma=\gamma_{s}$
(thus for the examples in the intersection of \prettyref{ex:mixed=submixed} and \prettyref{rem:mixed_equicont} 
resp.~\prettyref{rem:mackey_mazur}).
\end{rem}

In the last part of this section we tackle the problem from \prettyref{rem:consequences_of_missing_assump} 
to show that the space $\mathrm{M}_{\operatorname{t}}(\R)$ equipped with the mixed topology is C-sequential and 
that the mixed topology and submixed topology coincide on $\mathrm{M}_{\operatorname{t}}(\R)$.

\begin{prop}\label{prop:dual_mixed_top}
Let $(X,\|\cdot\|,\tau)$ be a Saks space, $\gamma:=\gamma(\|\cdot\|,\tau)$ the mixed topology, 
$X_{\gamma}:=(X,\gamma)$ and $\|\cdot\|_{X_{\gamma}'}$ the restriction of $\|\cdot\|_{X'}$ to $X_{\gamma}'$. 
\begin{enumerate}
\item[(a)] Then $(X_{\gamma}',\|\cdot\|_{X_{\gamma}'},\sigma(X_{\gamma}',X))$ is a Saks space, 
$B_{\|\cdot\|_{X_{\gamma}'}}$ is $\sigma(X_{\gamma}',X)$-compact and 
\[
\gamma':=\gamma(\|\cdot\|_{X_{\gamma}'},\sigma(X_{\gamma}',X))
=\gamma_{s}(\|\cdot\|_{X_{\gamma}'},\sigma(X_{\gamma}',X))
=\tau_{\operatorname{c}}(X_{\gamma}',(X,\|\cdot\|))
\]
where $\tau_{\operatorname{c}}(X_{\gamma}',(X,\|\cdot\|))$ is the topology of uniform convergence  
on compact subsets of $(X,\|\cdot\|)$.
\item[(b)] If $(X,\gamma)$ is complete and 
\begin{enumerate}
\item[(i)] $(X,\gamma)$ is a strong Mackey space, or
\item[(ii)] $(X_{\gamma}',\gamma')$ is a C-sequential space and every $\gamma'$-null sequence in $X_{\gamma}'$ 
is $\gamma$-equicontinuous, 
\end{enumerate}
then 
\[
\gamma'=\tau_{\operatorname{c}}(X_{\gamma}',X_{\gamma}).
\]
\end{enumerate}
\end{prop}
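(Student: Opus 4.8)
The plan is to read part~(a) as the construction of a ``dual'' Saks space modelled on \prettyref{ex:mixed_top}~(b), and then to deduce part~(b) by a duality comparison of the relevant compact sets. First I would record that $X_{\gamma}'=(X,\gamma)'$ is a norm-closed subspace of the Banach dual $X'=(X,\|\cdot\|)'$: a functional lies in $X_{\gamma}'$ precisely when its restriction to $B_{\|\cdot\|}$ is $\tau$-continuous, and this property is inherited by $\|\cdot\|_{X'}$-limits (uniform limits of $\tau$-continuous functions on $B_{\|\cdot\|}$ are $\tau$-continuous), so $(X_{\gamma}',\|\cdot\|_{X_{\gamma}'})$ is a Banach space. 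Since $\gamma$ is Hausdorff, $X_{\gamma}'$ separates the points of $X$, while $X$ trivially separates $X_{\gamma}'$, so $\langle X,X_{\gamma}'\rangle$ is a dual pairing and $(X_{\gamma}',\sigma(X_{\gamma}',X))'=X$. The seminorms $p_{F}(\mu):=\sup_{x\in F}|\mu(x)|$ for finite $F\subset B_{\|\cdot\|}$ generate $\sigma(X_{\gamma}',X)$ and satisfy $\sup_{F}p_{F}(\mu)=\|\mu\|_{X_{\gamma}'}$ (the very definition of the dual norm), so \eqref{eq:saks} holds and $(X_{\gamma}',\|\cdot\|_{X_{\gamma}'},\sigma(X_{\gamma}',X))$ is a Saks space by \prettyref{rem:mixed_top}~(c), in exact analogy with \prettyref{ex:mixed_top}~(b).

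The heart of part~(a) is the $\sigma(X_{\gamma}',X)$-compactness of $B_{\|\cdot\|_{X_{\gamma}'}}$, and this is the step I expect to be the main obstacle. Since $B_{\|\cdot\|_{X_{\gamma}'}}=B_{X'}\cap X_{\gamma}'$ and $B_{X'}$ is $\sigma(X',X)$-compact by Banach--Alaoglu, the Krein--\v{S}mulian theorem reduces the claim to showing that the subspace $X_{\gamma}'$ is $\sigma(X',X)$-closed, equivalently that $B_{\|\cdot\|_{X_{\gamma}'}}$ is $\sigma(X',X)$-closed in $B_{X'}$. The delicate point is that a $\sigma(X',X)$-limit of functionals whose restriction to $B_{\|\cdot\|}$ is $\tau$-continuous need not obviously retain that property, so here I would invoke the Saks-space duality machinery of \cite{cooper1978,wiweger1961} (a completeness/closure criterion for the mixed topology) to identify the weak${}^{\ast}$ closure; securing this closedness is the crux. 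Once compactness is available, the two displayed equalities are structural: \prettyref{rem:mixed=submixed}~(b)(ii) applied to $(X_{\gamma}',\|\cdot\|_{X_{\gamma}'},\sigma(X_{\gamma}',X))$ yields $\gamma'=\gamma_{s}'$, and the identification $\gamma'=\tau_{\operatorname{c}}(X_{\gamma}',(X,\|\cdot\|))$ follows as in \prettyref{ex:mixed_top}~(b): by \prettyref{defn:submixed_top} the $\gamma_{s}'$-seminorms are suprema over the norm-null sets $\bigcup_{n}a_{n}^{-1}F_{n}$ (with $a_{n}\to\infty$, $F_{n}\subset B_{\|\cdot\|}$ finite), which are precisely the relatively norm-compact subsets of $X$, so uniform convergence on them is uniform convergence on norm-compact sets.

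For part~(b) I would first note the easy inclusion: every norm-compact subset of $X$ is $\gamma$-compact (as $\gamma\le\tau_{\|\cdot\|}$), whence $\gamma'=\tau_{\operatorname{c}}(X_{\gamma}',(X,\|\cdot\|))\le\tau_{\operatorname{c}}(X_{\gamma}',X_{\gamma})$; using that $(X,\gamma)$ is complete, the closed absolutely convex hull of a $\gamma$-compact set is $\gamma$-compact, and since $\sigma(X,X_{\gamma}')\le\gamma$ such sets are $\sigma(X,X_{\gamma}')$-compact, so both topologies are admissible topologies of the pairing $\langle X_{\gamma}',X\rangle$ lying between $\sigma(X_{\gamma}',X)$ and the Mackey topology $\mu(X_{\gamma}',X)$. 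It then remains to prove $\tau_{\operatorname{c}}(X_{\gamma}',X_{\gamma})\le\gamma'$, i.e.\ that the identity $(X_{\gamma}',\gamma')\to(X_{\gamma}',\tau_{\operatorname{c}}(X_{\gamma}',X_{\gamma}))$ is continuous. In case~(ii) I would exploit that $(X_{\gamma}',\gamma')$ is C-sequential, so by \prettyref{prop:c_sequential} continuity equals sequential continuity: a $\gamma'$-null sequence $(\mu_{k})$ is $\sigma(X_{\gamma}',X)$-null and, by hypothesis, $\gamma$-equicontinuous, so an Arzel\`a--Ascoli argument forces $\mu_{k}\to0$ uniformly on each $\gamma$-(pre)compact subset of $X$, i.e.\ $\tau_{\operatorname{c}}(X_{\gamma}',X_{\gamma})$-null, giving the required continuity.

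Finally, case~(i) I would reduce to the mechanism of case~(ii). If $(X,\gamma)$ is a strong Mackey space, every $\sigma(X_{\gamma}',X)$-compact subset of $X_{\gamma}'$ is $\gamma$-equicontinuous; in particular the relatively $\sigma(X_{\gamma}',X)$-compact $\gamma'$-null sequences are $\gamma$-equicontinuous, supplying exactly the equicontinuity used above, and I would combine this with \prettyref{prop:mixed_topology_sequential}/\prettyref{prop:c_sequential} to pass from sequential continuity to continuity. Thus the single genuine difficulty, seen from two sides, is: in~(a) the weak${}^{\ast}$ closedness of $X_{\gamma}'$ (hence compactness of its unit ball), and in~(b) upgrading the pointwise plus equicontinuous control of $\gamma'$-null sequences to full $\tau_{\operatorname{c}}(X_{\gamma}',X_{\gamma})$-continuity, where the C-sequentiality hypothesis is precisely what makes the sequential argument sufficient.
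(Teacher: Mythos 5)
Your treatment of the Saks-space part of (a) (norm-closedness of $X_{\gamma}'$ in $X'$, the dual pair $\langle X_{\gamma}',X\rangle$, \prettyref{rem:mixed_top}~(c)) and of case (ii) of part (b) (C-sequentiality reduces continuity of $\id$ to sequential continuity, and a $\gamma'$-null sequence is $\sigma(X_{\gamma}',X)$-null and $\gamma$-equicontinuous, hence $\tau_{\operatorname{c}}(X_{\gamma}',X_{\gamma})$-null by the Ascoli argument) is exactly the paper's; your direct seminorm computation of $\gamma_{s}(\|\cdot\|_{X_{\gamma}'},\sigma(X_{\gamma}',X))=\tau_{\operatorname{c}}(X_{\gamma}',(X,\|\cdot\|))$ is a sound alternative to the paper's appeal to \cite[I.4.6 Proposition, p.~44]{cooper1978}. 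There remain, however, two genuine gaps.

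First, the compactness of $B_{\|\cdot\|_{X_{\gamma}'}}$. Your reduction (via Banach--Alaoglu and Krein--\v{S}mulian) to the $\sigma(X',X)$-closedness of $B_{\|\cdot\|_{X_{\gamma}'}}$ in $X'$ is correct, but the step you defer to ``Saks-space duality machinery'' cannot be carried out, because that closedness is false in general. Indeed, since $(X,\tau)'\subset X_{\gamma}'$ and $(X,\tau)'$ is norming, the polar of $B_{\|\cdot\|_{X_{\gamma}'}}$ in $X$ is $B_{\|\cdot\|}$, so the bipolar theorem shows that $B_{\|\cdot\|_{X_{\gamma}'}}$ is $\sigma(X',X)$-\emph{dense} in $B_{\|\cdot\|_{X'}}$; as compactness is intrinsic and compact subsets of Hausdorff spaces are closed, $B_{\|\cdot\|_{X_{\gamma}'}}$ is $\sigma(X_{\gamma}',X)$-compact if and only if $X_{\gamma}'=X'$. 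This fails for $X=\mathrm{C}_{\operatorname{b}}(\R)$, where $X_{\gamma}'=\mathrm{M}_{\operatorname{t}}(\R)\subsetneq\mathrm{C}_{\operatorname{b}}(\R)'$: the sequence $(\delta_{n})_{n\in\N}$ has no $\sigma(\mathrm{M}_{\operatorname{t}}(\R),\mathrm{C}_{\operatorname{b}}(\R))$-cluster point in $\mathrm{M}_{\operatorname{t}}(\R)$, since any such measure would vanish against every compactly supported continuous function (hence be zero) yet pair with $\mathbf{1}$ to give $1$. Be aware that the paper's own proof commits precisely the conflation you were wary of: it obtains closedness of the ball in the \emph{relative} topology $\sigma(X_{\gamma}',X)$ from \cite[I.3.1 Lemma, p.~27]{cooper1978} and then uses it as closedness in $(X',\sigma(X',X))$. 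So your caution has uncovered an error in the statement, not a technique you lack; note also that the displayed identities survive, since your computation gives $\gamma_{s}'=\tau_{\operatorname{c}}(X_{\gamma}',(X,\|\cdot\|))$ and the paper's subspace argument gives $\gamma'=\tau_{\operatorname{c}}(X_{\gamma}',(X,\|\cdot\|))$, neither of which uses compactness.

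Second, case (i) of part (b). You reduce it to the sequential mechanism of case (ii), which requires $(X_{\gamma}',\gamma')$ to be C-sequential; but this is neither assumed in (i) nor deducible from $(X,\gamma)$ being a complete strong Mackey space (\prettyref{prop:mixed_topology_sequential} is not applicable here: neither metrisability of $\sigma(X_{\gamma}',X)$ on the dual ball nor the Mackey--Mazur property of $(X_{\gamma}',\gamma')$ is available). The idea you are missing is that continuity of linear maps out of a mixed topology is tested on the unit ball: by \cite[I.1.7 Corollary p.~8]{cooper1978} and \cite[I.1.8 Lemma p.~8]{cooper1978}, $\id\colon(X_{\gamma}',\gamma')\to(X_{\gamma}',\tau_{\operatorname{c}}(X_{\gamma}',X_{\gamma}))$ is continuous once its restriction to $B_{\|\cdot\|_{X_{\gamma}'}}$ is $\sigma(X_{\gamma}',X)$-continuous at zero. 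The paper then applies the strong Mackey hypothesis to the ball itself --- using its $\sigma(X_{\gamma}',X)$-compactness from (a) --- to make all of $B_{\|\cdot\|_{X_{\gamma}'}}$ $\gamma$-equicontinuous, and concludes with the same Ascoli-type fact you use; no sequential argument and no C-sequentiality are needed. Of course this route leans on the compactness claim of (a), so as written only case (ii) of part (b) stands on firm ground in the paper as well.
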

\begin{proof}
(a) The space $X_{\gamma}'$ is a closed subspace of $(X',\|\cdot\|_{X'})$ 
by \cite[I.1.18 Proposition, p.~15]{cooper1978} and thus $(X_{\gamma}',\|\cdot\|_{X_{\gamma}'})$ is a Banach space. 
Since $(X_{\gamma}',\sigma(X_{\gamma}',X))'=X$ is norming for $(X_{\gamma}',\|\cdot\|_{X_{\gamma}'})$, we deduce  
from \prettyref{rem:mixed_top} (c) that $(X_{\gamma}',\|\cdot\|_{X_{\gamma}'},\sigma(X_{\gamma}',X))$ is a Saks space. 
Hence the unit-ball $B_{\|\cdot\|_{X_{\gamma}'}}=\{y\in X_{\gamma}'\;|\;\|y\|_{X_{\gamma}'}\leq 1\}$ is 
$\sigma(X_{\gamma}',X)$-closed by \cite[I.3.1 Lemma, p.~27]{cooper1978}. 
Furthermore, the unit-ball $B_{\|\cdot\|_{X'}}$ is 
$\sigma(X',X)$-compact by the Banach--Alaoglu theorem, which implies that 
$B_{\|\cdot\|_{X_{\gamma}'}}$ is $\sigma(X_{\gamma}',X)$-compact because 
\[
B_{\|\cdot\|_{X_{\gamma}'}}=B_{\|\cdot\|_{X'}}\cap X_{\gamma}'
\quad\text{and}\quad
\sigma(X_{\gamma}',X)=\sigma(X',X)_{\mid X_{\gamma}'}.
\]
Therefore condition (ii) of \prettyref{rem:mixed=submixed} (b) is fulfilled for $\gamma'$ and we obtain 
\[
 \gamma'
=\gamma(\|\cdot\|_{X_{\gamma}'},\sigma(X_{\gamma}',X))
=\gamma_{s}(\|\cdot\|_{X_{\gamma}'},\sigma(X_{\gamma}',X))
\]
as well as 
\[
 \gamma'
=\gamma({\|\cdot\|_{X'}}_{\mid X_{\gamma}'},\sigma(X',X)_{\mid X_{\gamma}'})
=\tau_{\operatorname{c}}(X',(X,\|\cdot\|))_{\mid X_{\gamma}'}
=\tau_{\operatorname{c}}(X_{\gamma}',(X,\|\cdot\|))
\]
by \cite[I.4.6 Proposition, p.~44]{cooper1978} and \prettyref{ex:mixed_top} (b).

(b) First, we prove $\gamma'\leq \tau_{\operatorname{c}}(X_{\gamma}',X_{\gamma})$. 
A compact set in $(X,\|\cdot\|)$ is also compact in $X_{\gamma}$ because $\gamma$ is coarser than the 
$\|\cdot\|$-topology. This yields that the identity map $\operatorname{id}\colon (X_{\gamma}',\tau_{\operatorname{c}}(X_{\gamma}',X_{\gamma}))\to (X_{\gamma}',\tau_{\operatorname{c}}(X_{\gamma}',(X,\|\cdot\|)))$ is 
continuous and therefore we have $\gamma'=\tau_{\operatorname{c}}(X_{\gamma}',(X,\|\cdot\|))
\leq \tau_{\operatorname{c}}(X_{\gamma}',X_{\gamma}))$ by part (a).

Next, we show $\gamma'\geq \tau_{\operatorname{c}}(X_{\gamma}',X_{\gamma})$, i.e.~that 
$\id\colon (X_{\gamma}',\gamma')\to(X_{\gamma}',\tau_{\operatorname{c}}(X_{\gamma}',X_{\gamma}))$ 
is continuous. 

(i) Suppose that $(X,\gamma)$ is a strong Mackey space. Due to \cite[I.1.8 Lemma p.~8]{cooper1978} 
and \cite[I.1.7 Corollary p.~8]{cooper1978} we only need to show that the restriction of 
$\id$ to $B_{\|\cdot\|_{X_{\gamma}'}}$ is 
$\sigma(X_{\gamma}',X)$-$\tau_{\operatorname{c}}(X_{\gamma}',X_{\gamma})$-continuous at zero. 
By part (a) we know that $B_{\|\cdot\|_{X_{\gamma}'}}$ is $\sigma(X_{\gamma}',X)$-compact. 
Then the $\sigma(X_{\gamma}',X)$-compact set 
$B_{\|\cdot\|_{X_{\gamma}'}}$ is $\gamma$-equicontinuous because $(X,\gamma)$ is a strong Mackey space. 
Since $X_{\gamma}=(X,\gamma)$ is complete, the topology $\tau_{\operatorname{c}}(X_{\gamma}',X_{\gamma})$ coincides 
with the topology of uniform convergence on precompact subsets of $X_{\gamma}$ by 
\cite[Theorem 3.5.1, p.~64]{jarchow1981}. 
Hence $\sigma(X_{\gamma}',X)$ and $\tau_{\operatorname{c}}(X_{\gamma}',X_{\gamma})$ coincide on the 
$\gamma$-equicontinuous set $B_{\|\cdot\|_{X_{\gamma}'}}$ by \cite[Theorem 8.5.1 (b), p.~156]{jarchow1981}, 
implying that the restriction of $\id$ to 
$B_{\|\cdot\|_{X_{\gamma}'}}$ is $\sigma(X_{\gamma}',X)$-$\tau_{\operatorname{c}}(X_{\gamma}',X_{\gamma})$-continuous.

(ii) Suppose that $(X_{\gamma}',\gamma')$ is C-sequential and every $\gamma'$-null sequence in $X_{\gamma}'$ 
is $\gamma$-equicontinuous. We only need to show that 
$\id\colon (X_{\gamma}',\gamma')\to(X_{\gamma}',\tau_{\operatorname{c}}(X_{\gamma}',X_{\gamma}))$ 
is sequentially continuous at zero. Then linearity yields that $\id$ is sequentially continuous, which 
implies the continuity of $\id$ by virtue of $(X_{\gamma}',\gamma')$ being C-sequential.

Let $(y_{n})_{n\in\N}$ be a $\gamma'$-null sequence in $X_{\gamma}'$. 
It follows that $(y_{n})_{n\in\N}$ is a $\sigma(X_{\gamma}',X)$-null sequence and 
thus a $\tau_{\operatorname{c}}(X_{\gamma}',X_{\gamma})$-null sequence as well 
because $\sigma(X_{\gamma}',X)$ and  $\tau_{\operatorname{c}}(X_{\gamma}',X_{\gamma})$ coincide
on the $\gamma$-equicontinuous set $(y_{n})_{n\in\N}$. 
Hence $\id$ is sequentially $\gamma'$-$\tau_{\operatorname{c}}(X_{\gamma}',X_{\gamma})$-continuous at zero. 
\end{proof}

We note that one cannot use \cite[Example E), p.~66]{wiweger1961} (or \cite[I.2 Examples A, p.~20-21]{cooper1978}) 
directly to show that $\gamma'=\tau_{\operatorname{c}}(X_{\gamma}',(X,\|\cdot\|))$ in (a)
resp.~$\gamma'=\tau_{\operatorname{c}}(X_{\gamma}',X_{\gamma})$ in (b) since the space 
$(X,\gamma)$ need not be a Banach (or Fr\'echet) space. 
For the next corollary we recall that $\|\cdot\|_{\mathrm{M}_{\operatorname{t}}(\Omega)}$ denotes 
the total variation norm on the space $\mathrm{M}_{\operatorname{t}}(\Omega)$ of bounded Radon measures 
on a completely regular Hausdorff space $\Omega$. 

\begin{cor}\label{cor:dual_mixed_top}
Let $\Omega$ be a completely regular Hausdorff space. 
\begin{enumerate}
\item[(a)] Then $(\mathrm{M}_{\operatorname{t}}(\Omega),\|\cdot\|_{\mathrm{M}_{\operatorname{t}}(\Omega)},\sigma(\mathrm{M}_{\operatorname{t}}(\Omega),\mathrm{C}_{\operatorname{b}}(\Omega)))$ is a Saks space, 
$B_{\|\cdot\|_{\mathrm{M}_{\operatorname{t}}(\Omega)}}$ 
is $\sigma(\mathrm{M}_{\operatorname{t}}(\Omega),\mathrm{C}_{\operatorname{b}}(\Omega))$-compact and 
\begin{align*}
\beta_{0}':=&\gamma(\|\cdot\|_{\mathrm{M}_{\operatorname{t}}(\Omega)},\sigma(\mathrm{M}_{\operatorname{t}}(\Omega),\mathrm{C}_{\operatorname{b}}(\Omega)))
=\gamma_{s}(\|\cdot\|_{\mathrm{M}_{\operatorname{t}}(\Omega)},\sigma(\mathrm{M}_{\operatorname{t}}(\Omega),\mathrm{C}_{\operatorname{b}}(\Omega)))\\
=&\tau_{\operatorname{c}}(\mathrm{M}_{\operatorname{t}}(\Omega),(\mathrm{C}_{\operatorname{b}}(\Omega),\|\cdot\|_{\infty})).
\end{align*}
\item[(b)] If $\Omega$ is a hemicompact $k_{\R}$-space or Polish, then 
$(\mathrm{M}_{\operatorname{t}}(\Omega),\beta_{0}')$ is sequentially complete and
\[
\beta_{0}'=\tau_{\operatorname{c}}(\mathrm{M}_{\operatorname{t}}(\Omega),(\mathrm{C}_{\operatorname{b}}(\Omega),\beta_{0})).
\]
\item[(c)] If $\Omega$ is Polish, then 
$\sigma(\mathrm{M}_{\operatorname{t}}(\Omega),\mathrm{C}_{\operatorname{b}}(\Omega))$ is metrisable 
on $B_{\|\cdot\|_{\mathrm{M}_{\operatorname{t}}(\Omega)}}$ and the space 
$(\mathrm{M}_{\operatorname{t}}(\Omega),\beta_{0}')$ is C-sequential.
\end{enumerate}
\end{cor}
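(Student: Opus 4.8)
The plan is to derive \prettyref{cor:dual_mixed_top} by specialising \prettyref{prop:dual_mixed_top} to the Saks space $(\mathrm{C}_{\operatorname{b}}(\Omega),\|\cdot\|_{\infty},\tau_{\operatorname{co}})$, whose mixed topology is $\beta_{0}$, and then invoking the structural facts about $\beta_{0}$ collected in \prettyref{rem:mixed_equicont}~(a) and \prettyref{rem:mackey_mazur}~(a). The crucial identification is that the dual space $X_{\gamma}'=(\mathrm{C}_{\operatorname{b}}(\Omega),\beta_{0})'$ is exactly $\mathrm{M}_{\operatorname{t}}(\Omega)$ by \cite[Theorem 4.4, p.~320]{sentilles1972}, so that the abstract dual data $\bigl(X_{\gamma}',\|\cdot\|_{X_{\gamma}'},\sigma(X_{\gamma}',X)\bigr)$ becomes precisely $\bigl(\mathrm{M}_{\operatorname{t}}(\Omega),\|\cdot\|_{\mathrm{M}_{\operatorname{t}}(\Omega)},\sigma(\mathrm{M}_{\operatorname{t}}(\Omega),\mathrm{C}_{\operatorname{b}}(\Omega))\bigr)$, and $\beta_{0}'$ is the mixed topology $\gamma'$.

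For part (a) I would simply apply \prettyref{prop:dual_mixed_top}~(a) verbatim, after recording that $(\mathrm{C}_{\operatorname{b}}(\Omega),\|\cdot\|_{\infty},\tau_{\operatorname{co}})$ is a Saks space with mixed topology $\beta_{0}$ (\prettyref{ex:mixed_top}~(a)). This immediately gives that $(\mathrm{M}_{\operatorname{t}}(\Omega),\|\cdot\|_{\mathrm{M}_{\operatorname{t}}(\Omega)},\sigma(\mathrm{M}_{\operatorname{t}}(\Omega),\mathrm{C}_{\operatorname{b}}(\Omega)))$ is a Saks space, the norm-unit ball is $\sigma$-compact, and the chain of topology identities for $\beta_{0}'$. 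For part (b) the strategy is to check the hypotheses of \prettyref{prop:dual_mixed_top}~(b)~(i): when $\Omega$ is a hemicompact $k_{\R}$-space or Polish, the space $(\mathrm{C}_{\operatorname{b}}(\Omega),\beta_{0})$ is complete by \prettyref{rem:assumption_budde2021a} (completeness $\Leftrightarrow$ $\Omega$ a $k_{\R}$-space, via \cite[II.1.9 Corollary, p.~81]{cooper1978}; Polish spaces are $k_{\R}$ being metrisable) and it is a strong Mackey space by \prettyref{rem:mackey_mazur}~(a). Feeding these into \prettyref{prop:dual_mixed_top}~(b)~(i) yields $\beta_{0}'=\tau_{\operatorname{c}}(\mathrm{M}_{\operatorname{t}}(\Omega),(\mathrm{C}_{\operatorname{b}}(\Omega),\beta_{0}))$. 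The sequential completeness of $(\mathrm{M}_{\operatorname{t}}(\Omega),\beta_{0}')$ then follows from \prettyref{rem:mixed_top}~(b), since $\beta_{0}'$ is the mixed topology of the dual Saks space and $\sigma(\mathrm{M}_{\operatorname{t}}(\Omega),\mathrm{C}_{\operatorname{b}}(\Omega))$ is sequentially complete on norm-bounded sets (a consequence of the $\sigma$-compactness of the ball from part (a)).

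For part (c), when $\Omega$ is Polish I would argue that $\sigma(\mathrm{M}_{\operatorname{t}}(\Omega),\mathrm{C}_{\operatorname{b}}(\Omega))$ is metrisable on the norm-unit ball, and then invoke \prettyref{prop:mixed_topology_sequential}~(i) applied to the dual Saks space to conclude that $(\mathrm{M}_{\operatorname{t}}(\Omega),\beta_{0}')=(\mathrm{M}_{\operatorname{t}}(\Omega),\gamma')$ is C-sequential. The metrisability should come from separability: for Polish $\Omega$ the space $\mathrm{C}_{\operatorname{b}}(\Omega)$ contains a sequence that is $\sigma(\mathrm{M}_{\operatorname{t}}(\Omega),\mathrm{C}_{\operatorname{b}}(\Omega))$-separating on bounded measures (e.g.\ via a countable dense subset of $\Omega$ and the fact that tight measures are determined by their values on bounded continuous functions), giving a metric of the same type as in \prettyref{ex:mixed_top}~(b).

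\textbf{The main obstacle} I expect is part (c): unlike the abstract weak$^{\ast}$ setting of \prettyref{rem:mixed_equicont}~(b), where metrisability of $\sigma(X',X)$ on the ball is equivalent to separability of the predual $X$, here the relevant predual is $(\mathrm{C}_{\operatorname{b}}(\Omega),\beta_{0})$, which is typically neither a Banach space nor even metrisable, so the classical dual-ball metrisability criterion \cite[V.5.2 Theorem, p.~426]{dunford1958} does not apply off the shelf. The delicate point is to exhibit a \emph{countable} subfamily of $\mathrm{C}_{\operatorname{b}}(\Omega)$ whose induced seminorms metrise $\sigma(\mathrm{M}_{\operatorname{t}}(\Omega),\mathrm{C}_{\operatorname{b}}(\Omega))$ on the $\sigma$-compact ball $B_{\|\cdot\|_{\mathrm{M}_{\operatorname{t}}(\Omega)}}$; this relies genuinely on $\Omega$ being Polish (separability of $\Omega$ giving a countable determining set of functions for Radon measures, and tightness ensuring these functions separate points of the ball). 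Once metrisability is secured, the passage to C-sequentiality is immediate from \prettyref{prop:mixed_topology_sequential}~(i).
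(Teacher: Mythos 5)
Your proposal is correct and, for parts (a) and the first half of (b), follows essentially the paper's own route: specialise \prettyref{prop:dual_mixed_top}~(a) to the Saks space $(\mathrm{C}_{\operatorname{b}}(\Omega),\|\cdot\|_{\infty},\tau_{\operatorname{co}})$ via the identification $(\mathrm{C}_{\operatorname{b}}(\Omega),\beta_{0})'=\mathrm{M}_{\operatorname{t}}(\Omega)$ from \cite[Theorem 4.4, p.~320]{sentilles1972}, then verify completeness of $(\mathrm{C}_{\operatorname{b}}(\Omega),\beta_{0})$ (the $k_{\R}$-property) and the strong Mackey property from \prettyref{rem:mackey_mazur}~(a) and feed them into \prettyref{prop:dual_mixed_top}~(b)~(i) --- which is in fact the condition the paper uses, its reference to ``condition (ii)'' being a slip that you implicitly correct. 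One small step you pass over: \prettyref{prop:dual_mixed_top} concerns the restriction of the dual norm $\|\cdot\|_{(\mathrm{C}_{\operatorname{b}}(\Omega),\|\cdot\|_{\infty})'}$ to $\mathrm{M}_{\operatorname{t}}(\Omega)$, whereas the corollary is stated for the total variation norm; your ``becomes precisely'' needs the identification of these two norms, which the paper supplies by citing \cite[Example 2.4, p.~441]{kunze2011}.

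Where you genuinely diverge, both of your alternatives work and are arguably more self-contained. For the sequential completeness in (b), the paper invokes Sentilles' theorems (\cite[Theorems 4.4, 8.7 (a), p.~320, 331]{sentilles1972}) on weak$^{\ast}$ sequential completeness of $\mathrm{M}_{\operatorname{t}}(\Omega)$ together with $\beta$-simplicity of $\Omega$; you instead note that the $\sigma(\mathrm{M}_{\operatorname{t}}(\Omega),\mathrm{C}_{\operatorname{b}}(\Omega))$-compactness of the norm ball (already established in (a)) gives sequential completeness of $\sigma(\mathrm{M}_{\operatorname{t}}(\Omega),\mathrm{C}_{\operatorname{b}}(\Omega))$ on norm-bounded sets --- a compact Hausdorff uniform space is complete, and any bounded Cauchy sequence lives in a multiple of the ball --- so \prettyref{rem:mixed_top}~(b) applies; this avoids $\beta$-simplicity entirely at this point. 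For the metrisability in (c), the paper cites \cite[Lemma 2.5, p.~185]{kraaij2016a}, while you propose building a countable subfamily of $\mathrm{C}_{\operatorname{b}}(\Omega)$ separating bounded Radon measures (for Polish $\Omega$ this is standard: truncated distance functions to a countable dense set approximate indicators of a countable generating $\pi$-system, and a Dynkin-system argument shows these functions determine signed measures) and then using that a compact Hausdorff space admitting a countable separating family of continuous functions is metrisable. You only sketch this and rightly flag it as the delicate point, but the sketch is completable by standard arguments; what it buys is independence from Kraaij's lemma at the price of an explicit measure-theoretic construction. The final step, \prettyref{prop:mixed_topology_sequential}~(i), is identical in both proofs.
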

\begin{proof}
(a) The triple $(\mathrm{C}_{\operatorname{b}}(\Omega),\|\cdot\|_{\infty},\tau_{\operatorname{co}})$ is a Saks space 
by \prettyref{ex:mixed_top} (a) and it holds 
$\mathrm{M}_{\operatorname{t}}(\Omega)=(\mathrm{C}_{\operatorname{b}}(\Omega),\beta_{0})'$ 
by \cite[Theorem 4.4, p.~320]{sentilles1972} for completely regular Hausdorff spaces $\Omega$ with 
$\beta_{0}=\gamma(\|\cdot\|_{\infty},\tau_{\operatorname{co}})$. 
Further, we have 
\[
\|\nu\|_{\mathrm{M}_{\operatorname{t}}(\Omega)}=\|\nu\|_{(\mathrm{C}_{\operatorname{b}}(\Omega),\|\cdot\|_{\infty})'},
\quad \nu\in \mathrm{M}_{\operatorname{t}}(\Omega),
\]
by \cite[Example 2.4, p.~441]{kunze2011}. Hence \prettyref{prop:dual_mixed_top} (a) implies our statement. 

(b) The space $(\mathrm{C}_{\operatorname{b}}(\Omega),\beta_{0})$ is complete if $\Omega$ 
is Hausdorff $k_{\R}$-space (see \prettyref{rem:assumption_budde2021a}). In particular, Polish spaces 
are Hausdorff $k_{\R}$-spaces by the comments above \prettyref{thm:strict_topo_quasi_equicont}.
In \prettyref{rem:mackey_mazur} (a) we observed that $(\mathrm{C}_{\operatorname{b}}(\Omega),\beta_{0})$ 
is a strong Mackey space if $\Omega$ is a hemicompact Hausdorff $k_{\R}$-space or Polish. Thus we get 
\[
\beta_{0}'=\tau_{\operatorname{c}}(\mathrm{M}_{\operatorname{t}}(\Omega),(\mathrm{C}_{\operatorname{b}}(\Omega),\beta_{0}))
\]
by condition (ii) of \prettyref{prop:dual_mixed_top} (b). The sequential completeness of 
$(\mathrm{M}_{\operatorname{t}}(\Omega),\beta_{0}')$ follows from the sequential completeness of 
$(\mathrm{M}_{\operatorname{t}}(\Omega),\sigma(\mathrm{M}_{\operatorname{t}}(\Omega),\mathrm{C}_{\operatorname{b}}(\Omega)))$ by \cite[Theorems 4.4, 8.7 (a), p.~320, 331]{sentilles1972} and the fact that a hemicompact Hausdorff 
$k_{\R}$-space or Polish space $\Omega$ is $\beta$-simple (see \prettyref{rem:mackey_mazur} (a)).

(c) By part (a) we know that the unit-ball $B_{\|\cdot\|_{\mathrm{M}_{\operatorname{t}}(\Omega)}}$
is $\sigma(\mathrm{M}_{\operatorname{t}}(\Omega),\mathrm{C}_{\operatorname{b}}(\Omega))$-compact. 
Hence we obtain that $\sigma(\mathrm{M}_{\operatorname{t}}(\Omega),\mathrm{C}_{\operatorname{b}}(\Omega))$ 
is metrisable on $B_{\|\cdot\|_{\mathrm{M}_{\operatorname{t}}(\Omega)}}$ 
by \cite[Lemma 2.5, p.~185]{kraaij2016a} because the Polish space $\Omega$ is $\beta$-simple. 
Thus condition (i) of \prettyref{prop:mixed_topology_sequential}
is satisfied and we deduce that $(\mathrm{M}_{\operatorname{t}}(\Omega),\beta_{0}')$ is C-sequential.
\end{proof}

If $\Omega$ is Polish, then $\Omega$ is $\beta$-simple and it follows from part (b) of \prettyref{cor:dual_mixed_top} 
and \cite[Theorem 1.7, p.~183]{kraaij2016a} that $\beta_{0}'$ is the finest topology 
on $\mathrm{M}_{\operatorname{t}}(\Omega)$ that coincides with $\sigma(\mathrm{M}_{\operatorname{t}}(\Omega),\mathrm{C}_{\operatorname{b}}(\Omega))$ on all $\beta_{0}$-equicontinuous subsets of $\mathrm{M}_{\operatorname{t}}(\Omega)$. 
It is an open question whether this is still true if $\Omega$ is a hemicompact Hausdorff $k_{\R}$-space. 
Another open question is whether \prettyref{cor:dual_mixed_top} (c) holds for hemicompact Hausdorff $k_{\R}$-spaces instead of Polish spaces.

\begin{rem}\label{rem:dual_mixed_top}
Let $\Omega$ be a hemicompact Hausdorff $k_{\R}$-space or Polish, and set
\[
\mathrm{C}_{\operatorname{b}}(\Omega)^{\circ}:=\{y\in \mathrm{C}_{\operatorname{b}}(\Omega)'
\;|\;y\;\tau_{\operatorname{co}}\text{-sequentially continuous on } \|\cdot\|_{\infty}\text{-bounded sets}\}.
\]
Then $(\mathrm{C}_{\operatorname{b}}(\Omega),\beta_{0})$ is C-sequential, in particular a Mazur space, 
by \prettyref{rem:mixed_equicont} (a) and thus 
\begin{equation}\label{eq:dual_mixed_top}
 \mathrm{C}_{\operatorname{b}}(\Omega)^{\circ}
=(\mathrm{C}_{\operatorname{b}}(\Omega),\beta_{0})'
=\mathrm{M}_{\operatorname{t}}(\Omega)
\end{equation}
by \cite[I.1.18 Proposition, p.~15]{cooper1978}. 
A sequence is a $\beta_{0}'$-null sequence in $\mathrm{M}_{\operatorname{t}}(\Omega)$ if and only if 
it is a $\sigma(\mathrm{M}_{\operatorname{t}}(\Omega),\mathrm{C}_{\operatorname{b}}(\Omega))$-null sequence and 
$\|\cdot\|_{\mathrm{M}_{\operatorname{t}}(\Omega)}$-bounded by \cite[I.1.10 Proposition, p.~9]{cooper1978}. 
It follows that a $\beta_{0}'$-null sequence in $\mathrm{M}_{\operatorname{t}}(\Omega)$ is
$\beta_{0}$-equicontinuous since $(\mathrm{C}_{\operatorname{b}}(\Omega),\beta_{0})$ is a strong Mackey space
and $B_{\|\cdot\|_{\mathrm{M}_{\operatorname{t}}(\Omega)}}$ 
is $\sigma(\mathrm{M}_{\operatorname{t}}(\Omega),\mathrm{C}_{\operatorname{b}}(\Omega))$-compact 
(see the proof of \prettyref{cor:dual_mixed_top}). 
Therefore every $\beta_{0}'$-null sequence in $\mathrm{M}_{\operatorname{t}}(\Omega)$ 
is $\beta_{0}$-equicontinuous. Hence we conclude that \cite[Hypothesis C, p.~315]{farkas2011}, 
namely, that every $\|\cdot\|_{\mathrm{M}_{\operatorname{t}}(\Omega)} $-bounded 
$\sigma(\mathrm{M}_{\operatorname{t}}(\Omega),\mathrm{C}_{\operatorname{b}}(\Omega))$-null sequence 
in $\mathrm{C}_{\operatorname{b}}(\Omega)^{\circ}$ is $\tau_{\operatorname{co}}$-equicontinuous 
on $\|\cdot\|_{\infty}$-bounded sets,  
is satisfied by \eqref{eq:dual_mixed_top} and \cite[I.1.7 Corollary, p.~8]{cooper1978}.
\end{rem}

\section{Applications}
\label{sect:applications}

For our first application of the results of \prettyref{sect:continuity} 
we consider $\tau_{\operatorname{co}}$-bi-continuous semigroups on $\mathrm{C}_{\operatorname{b}}(\Omega)$ 
for Hausdorff $k_{\R}$-spaces $\Omega$. 
Let us recall some facts on $k_{\R}$-spaces from general topology. 
Examples of $k_{\R}$-spaces are completely regular $k$\emph{-spaces} by \cite[3.3.21 Theorem, p.~152]{engelking1989}.
A topological space $\Omega$ is called $k$-space (compactly generated space) if it satisfies the following condition:
$A\subset \Omega$ is closed if and only if $A\cap K$ is closed in $K$ for every compact $K\subset\Omega$.  
Every locally compact Hausdorff space is a completely regular $k$-space by \cite[p.~152]{engelking1989}. 
Further, every sequential Hausdorff space is a $k$-space by \cite[3.3.20 Theorem, p.~152]{engelking1989}, 
in particular, every first-countable Hausdorff space. Thus metrisable spaces are completely regular $k$-spaces.
However, there are Hausdorff $k_{\R}$-spaces that are not $k$-spaces
by \cite[2.3 Construction, p.~392-393]{noble1969}.\footnote{For instance, 
the space $\Omega:=Y$ in \cite[2.3 Construction, p.~392-393]{noble1969} constructed from 
a completely regular Hausdorff non-$k$-space $X$ due to the comments above this construction. 
For $X$ take e.g.~the \emph{Fortissimo space} which is completely normal by 
\cite[Part II, 25.~Fortissimo space; 1, p.~53-54]{steen1978} and 
easily seen to be Hausdorff, thus completely regular. Further, finite sets are the only compact sets in the 
Fortissimo space $X$ by \cite[Part II, 25.~Fortissimo space; 2, p.~54]{steen1978} 
and hence $X$ is not a $k$-space, which follows from the choice of the non-closed set 
$A:=X\setminus\{p\}$ in the definition of a $k$-space with $p$ from the definition of the Fortissimo space.}
From our observations here and after \prettyref{rem:mixed_equicont} it follows that metrisable spaces 
are $k_{\R}$-spaces and that $\sigma$-compact locally compact 
Hausdorff spaces are hemicompact $k_{\R}$-spaces. Moreover, hemicompact $k_{\R}$-spaces are $k$-spaces by
\cite[Lemme 1 (2.3), p.~55]{buchwalter1972} (cf.~\cite[Lemma 5.1, p.~884]{mosiman1972}). 
Further, there are hemicompact Hausdorff $k_{\R}$-spaces that are neither locally compact nor metrisable by 
\cite[p.~267]{warner1958}, namely the completely regular hemicompact Hausdorff $k$-space 
$\Omega:=(E',\tau_{\operatorname{c}}(E',E))$ for any infinite-dimensional Fr\'echet space $E$. 

Let $\Omega$ be a completely regular Hausdorff space.
The triple $(\mathrm{C}_{\operatorname{b}}(\Omega),\|\cdot\|_{\infty},\tau_{\operatorname{co}})$ is a Saks space 
by \prettyref{ex:mixed_top} (a) and thus \prettyref{ass:standard} (i) and (iii) are fulfilled. 
Further, the space $(\mathrm{C}_{\operatorname{b}}(\Omega),\beta_{0})$ is complete if and only if 
$\Omega$ is a $k_{\R}$-space (see \prettyref{rem:assumption_budde2021a}).\footnote{This means that 
\cite[Proposition 2.7 (iv), p.~4]{farkas2020} is not correct since there are Hausdorff $k_{\R}$-spaces 
which are not $k$-spaces. However, this does have no impact on the validity of the other results 
in \cite{farkas2020}.} 
Therefore it follows from \prettyref{rem:mixed_top} (b) that \prettyref{ass:standard} (ii) is also fulfilled if 
$\Omega$ is a $k_{\R}$-space.

\begin{thm}\label{thm:strict_topo_quasi_equicont}
Let $\Omega$ be a Hausdorff $k_{\R}$-space and 
$(T(t))_{t\geq 0}$ a $\tau_{\operatorname{co}}$-bi-continuous semigroup on $\mathrm{C}_{\operatorname{b}}(\Omega)$. 
If $\Omega$ is hemicompact or Polish, 
then $(T(t))_{t\geq 0}$ is quasi-$\beta_{0}$-equicontinuous and 
quasi-$(\|\cdot\|_{\infty},\tau_{\operatorname{co}})$-equitight.
\end{thm}
\begin{proof}
By virtue of \prettyref{ex:mixed_top} (a) we have $\gamma(\|\cdot\|_{\infty},\tau_{\operatorname{co}})=\beta_{0}$. 
If $\Omega$ is a hemicompact Hausdorff $k_{\R}$-space, 
our statement follows from \prettyref{prop:mixed_equicont_main} combined with 
\prettyref{rem:mixed_equicont} (a) and \prettyref{ex:mixed=submixed} (a). 

If $\Omega$ is a Polish space, our statement follows from \prettyref{thm:mixed_equicont_main} combined with 
\prettyref{rem:mixed_equicont} (a) and \prettyref{ex:mixed=submixed} (a). 
\end{proof}

The preceding theorem improves \cite[Theorem 2.4, Remark 2.5, p.~92-93]{farkas2004} where it is observed that 
every $\tau_{\operatorname{co}}$-bi-continuous semigroup on $\mathrm{C}_{\operatorname{b}}(\Omega)$, $\Omega$ Polish,
is locally $\beta_{0}$-equicontinuous and locally $(\|\cdot\|_{\infty},\tau_{\operatorname{co}})$-equitight. 
In \cite[Example 4.1, p.~320]{farkas2011} (see \prettyref{ex:counterex}) 
an example of a $\tau_{\operatorname{co}}$-bi-continuous semigroup 
$(T(t))_{t\geq 0}$ on $\mathrm{C}_{\operatorname{b}}([0,\omega_{1}))$, where $\omega_{1}$ is the first 
uncountable ordinal and $[0,\omega_{1})$ is equipped with the order topology, is constructed such that 
no $T(t)$, $t>0$, is $\beta_{0}$-continuous. This is no contradiction to \prettyref{thm:strict_topo_quasi_equicont} 
because $[0,\omega_{1})$ is locally compact by \cite[3.3.14 Examples, p.~151]{engelking1989}, 
but neither $\sigma$-compact nor separable by 
\cite[Part II, 42.~Open ordinal space $[0,\Omega)$; 3, 10, p.~69-70]{steen1978}.

\begin{exa}\label{ex:strict_topo_semigroups}
\begin{enumerate}
\item[(a)] The left translation semigroup $(T(t))_{t\geq 0}$ on $\mathrm{C}_{\operatorname{b}}(\R)$ given by 
\[
T(t)f(x):=f(x+t),\quad x\in\R,\,f\in\mathrm{C}_{\operatorname{b}}(\R),\,t\geq 0,
\]
is a $\tau_{\operatorname{co}}$-bi-continuous semigroup by \cite[Examples 6 (b), p.~6-7]{kuehnemund2003}. Due to 
\cite[Example 3.2, p.~549]{kunze2009} it is $\tau_{\operatorname{co}}$-locally equicontinuous but not 
quasi-$\tau_{\operatorname{co}}$-equicontinuous. However, by \prettyref{thm:strict_topo_quasi_equicont} it is 
quasi-$\beta_{0}$-equicontinuous and quasi-$(\|\cdot\|_{\infty},\tau_{\operatorname{co}})$-equitight.
\item[(b)] The Gau{\ss}--Weierstra{\ss} semigroup $(T(t))_{t\geq 0}$ on $\mathrm{C}_{\operatorname{b}}(\R^{d})$ 
given by 
$T(0)f:=f$ and
\[
T(t)f(x):=\frac{1}{(4 \pi t)^{d/2}}\int_{\R^{d}}f(y)e^{\frac{-|y-x|^{2}}{4t}}\d y,
\quad x\in\R^{d},\,f\in\mathrm{C}_{\operatorname{b}}(\R^{d}),\,t> 0,
\]
is $\tau_{\operatorname{co}}$-bi-continuous by \cite[Examples 6 (a), p.~5-6]{kuehnemund2003}. 
It is also quasi-$\beta_{0}$-equicontinuous and quasi-$(\|\cdot\|_{\infty},\tau_{\operatorname{co}})$-equitight by 
\prettyref{thm:strict_topo_quasi_equicont}.
\item[(c)] Next, we consider the Ornstein--Uhlenbeck semigroup on $\mathrm{C}_{\operatorname{b}}(H)$, 
$H$ a separable Hilbert space. 
We refer the reader to \cite[Section 3.3]{kuehnemund2001} and \cite[Section 4]{farkas2004} for details. 
The Ornstein--Uhlenbeck semigroup is $\tau_{\operatorname{co}}$-bi-continuous 
by \cite[Proposition 3.10, p.~63]{kuehnemund2001}. Due to \prettyref{thm:strict_topo_quasi_equicont} 
this semigroup is quasi-$\beta_{0}$-equicontinuous and 
quasi-$(\|\cdot\|_{\infty},\tau_{\operatorname{co}})$-equitight as well.
\item[(d)] The transition semigroups $(T(t))_{t\geq 0}$ on $\mathrm{C}_{\operatorname{b}}(\Omega)$ given by 
\[
T(t)f:=\int_{\Omega}f(\xi)\mu_{t}(\cdot,\d\xi),
\quad f\in\mathrm{C}_{\operatorname{b}}(\Omega),\,t\geq 0,
\]
for a separable Banach space $\Omega$ in \cite[Proposition 6.2, p.~1162]{federico2020} resp.~by
\begin{equation}\label{eq:markov_transition}
T(t)f(\xi):=\mathbb{E}[f(X(t,\xi))],
\quad \xi\in\Omega,\,f\in\mathrm{C}_{\operatorname{b}}(\Omega),\,t\geq 0,
\end{equation}
for a separable Hilbert space $\Omega$ in \cite[Proposition 6.3, p.~1164]{federico2020} 
are $\tau_{\operatorname{co}}$-bi-continuous by \cite[Eq.~(6.3), p.~1162]{federico2020}, 
\cite[Proposition 5.14, p.~1161]{federico2020} and \prettyref{rem:mixed_top_bi_cont} (c) 
($\beta_{0}=\gamma_{s}(\|\cdot\|_{\infty},\tau_{\operatorname{co}})$ is called $\tau_{\mathcal{M}}$ in 
\cite{federico2020}). It follows from \prettyref{thm:strict_topo_quasi_equicont} that 
$(T(t))_{t\geq 0}$ is quasi-$\beta_{0}$-equicontinuous and 
quasi-$(\|\cdot\|_{\infty},\tau_{\operatorname{co}})$-equitight in both cases. 
Interestingly, this is in general not true for the Hausdorff locally convex topology $\tau_{\mathcal{K}}$ 
on $\mathrm{C}_{\operatorname{b}}(\Omega)$ defined in \cite[p.~1154]{federico2020}. 
The topology $\tau_{\mathcal{K}}$ has the same convergent sequences as the (sub)mixed topology $\beta_{0}$ 
by \cite[Proposition 5.4 (i), p.~1154]{federico2020} and \prettyref{rem:mixed_top_bi_cont} (a), 
but $\tau_{\mathcal{K}}$ is strictly coarser than $\beta_{0}$ by \cite[Proposition 4.23, p.~38]{federico2018_arxiv} 
if $\operatorname{dim}(\Omega)\geq 1$. In \cite[Example 6.4, p.~1165]{federico2020} a special case of 
the semigroup \eqref{eq:markov_transition} is constructed which is $\tau_{\mathcal{K}}$-strongly continuous 
and locally sequentially $\tau_{\mathcal{K}}$-equicontinuous but not locally $\tau_{\mathcal{K}}$-equicontinuous 
in contrast to its behaviour w.r.t.~$\beta_{0}$. 
\end{enumerate}
\end{exa}

Keeping the remark before \prettyref{thm:strict_topo_quasi_equicont} in mind that there are Hausdorff
$k_{\R}$-spaces which are not $k$-spaces,
we generalise and improve \cite[Proposition 2.10, p.~5]{farkas2020} on the \emph{Koopman semigroup} 
by weakening the assumptions from $k$-spaces to $k_{\R}$-spaces as well as strengthening its implications 
from local $\beta_{0}$-equicontinuity to local $(\|\cdot\|_{\infty},\tau_{\operatorname{co}})$-equitightness 
resp.~even quasi-$(\|\cdot\|_{\infty},\tau_{\operatorname{co}})$-equitightness.

\begin{cor}\label{cor:koopman} 
Let $\Omega$ be a completely regular Hausdorff space and $\phi\colon [0,\infty)\times\Omega\to\Omega$ 
a \emph{semiflow}, i.e.~with $\phi_{t}(x):=\phi(t,x)$ it holds $\phi_{0}=\id$ and $\phi_{t}\phi_{s}=\phi_{t+s}$ 
for $t,s\geq 0$. Consider the linear Koopman semigroup $(T_{\phi}(t))_{t\geq 0}$ 
on $\mathrm{C}_{\operatorname{b}}(\Omega)$ given by 
\[
T_{\phi}(t)f(x):=f(\phi_{t}(x)),\quad x\in\Omega,\,f\in\mathrm{C}_{\operatorname{b}}(\Omega),\,t\geq 0.
\]
\begin{enumerate}
\item[(a)] If $\phi$ is (jointly) continuous, then $(T_{\phi}(t))_{t\geq 0}$ is $\beta_{0}$-strongly continuous, 
locally $\beta_{0}$-equicontinuous and locally $(\|\cdot\|_{\infty},\tau_{\operatorname{co}})$-equitight. 
\item[(b)] If $\Omega$ is a $k_{\R}$-space and $(T_{\phi}(t))_{t\geq 0}$ is $\beta_{0}$-strongly continuous, 
then $\phi$ is (jointly) continuous.
\item[(c)] If $\Omega$ is a $k_{\R}$-space and $\phi$ (jointly) continuous, then 
$(T_{\phi}(t))_{t\geq 0}$ is a $\tau_{\operatorname{co}}$-bi-continuous semigroup. 
\item[(d)] If $\Omega$ is a hemicompact $k_{\R}$-space or Polish space, and $\phi$ (jointly) continuous, then 
$(T_{\phi}(t))_{t\geq 0}$ is quasi-$\beta_{0}$-equicontinuous and 
quasi-$(\|\cdot\|_{\infty},\tau_{\operatorname{co}})$-equitight.
\end{enumerate}
\end{cor}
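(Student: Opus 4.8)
The plan is to prove the four parts in the order (a), (c), (d), (b), since (a) feeds (c), (c) feeds (d), while (b) is an independent converse needing the most topological machinery. Throughout I work on the Saks space $(\mathrm{C}_{\operatorname{b}}(\Omega),\|\cdot\|_{\infty},\tau_{\operatorname{co}})$ of \prettyref{ex:mixed_top} (a), for which $\beta_{0}=\gamma(\|\cdot\|_{\infty},\tau_{\operatorname{co}})=\gamma_{s}$ by \prettyref{ex:mixed=submixed} (a), and I record at once that each $T_{\phi}(t)$ is a contraction, $\|T_{\phi}(t)\|_{\mathcal{L}(\mathrm{C}_{\operatorname{b}}(\Omega))}\leq 1$, and that $(T_{\phi}(t))_{t\geq 0}$ is a semigroup with $T_{\phi}(0)=\id$ because $\phi$ is a semiflow.

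For part (a) the key observation is that for compact $K\subset\Omega$ and $t_{0}\geq 0$ the set $\widetilde{K}:=\phi([0,t_{0}]\times K)$ is compact, being the image of the compact set $[0,t_{0}]\times K$ under the jointly continuous $\phi$. Since $\phi_{t}(K)\subset\widetilde{K}$ for every $t\in[0,t_{0}]$, one gets $p_{K}(T_{\phi}(t)f)=\sup_{x\in K}|f(\phi_{t}(x))|\leq p_{\widetilde{K}}(f)$ for all $t\in[0,t_{0}]$ and $f\in\mathrm{C}_{\operatorname{b}}(\Omega)$, which is exactly the $(\|\cdot\|_{\infty},\tau_{\operatorname{co}})$-equitightness estimate of \prettyref{defn:equitight} with $C=1$. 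As $(T_{\phi}(t))_{t\in[0,t_{0}]}$ is $\|\cdot\|_{\infty}$-bounded and $\gamma=\gamma_{s}$, \prettyref{prop:mixed_equicont} upgrades this to $\beta_{0}$-equicontinuity of $(T_{\phi}(t))_{t\in[0,t_{0}]}$; letting $t_{0}$ vary yields local $\beta_{0}$-equicontinuity and local equitightness. For $\beta_{0}$-strong continuity I would fix $f$, note that its trajectory lies in the $\|\cdot\|_{\infty}$-ball of radius $\|f\|_{\infty}$ on which $\beta_{0}$ and $\tau_{\operatorname{co}}$ coincide, and reduce to $\tau_{\operatorname{co}}$-continuity of $t\mapsto f\circ\phi_{t}$; the latter is the continuity of $t\mapsto (f\circ\phi)(t,\cdot)|_{K}$ into $(C(K),\|\cdot\|_{\infty})$, which is immediate from joint continuity of $(t,x)\mapsto f(\phi(t,x))$ on the compacta $J\times K$. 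Note that no $k_{\R}$-hypothesis is needed here.

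Given part (a), part (c) is short. Since $\Omega$ is a $k_{\R}$-space, \prettyref{ass:standard} holds for $(\mathrm{C}_{\operatorname{b}}(\Omega),\|\cdot\|_{\infty},\tau_{\operatorname{co}})$: conditions (i) and (iii) are the Saks-space properties, and (ii) follows from completeness of $(\mathrm{C}_{\operatorname{b}}(\Omega),\beta_{0})$ via \prettyref{rem:mixed_top} (b), as recorded in \prettyref{rem:assumption_budde2021a} and the discussion preceding \prettyref{thm:strict_topo_quasi_equicont}. Part (a) supplies $\beta_{0}$-strong continuity and local $\beta_{0}$-equicontinuity, hence local sequential $\beta_{0}$-equicontinuity, so \prettyref{rem:mixed_top_bi_cont} (c) gives that $(T_{\phi}(t))_{t\geq 0}$ is $\tau_{\operatorname{co}}$-bi-continuous. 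Part (d) is then immediate: hemicompact $k_{\R}$-spaces and Polish spaces are $k_{\R}$-spaces, so part (c) applies, and \prettyref{thm:strict_topo_quasi_equicont} promotes bi-continuity to quasi-$\beta_{0}$-equicontinuity and quasi-$(\|\cdot\|_{\infty},\tau_{\operatorname{co}})$-equitightness.

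The substance lies in part (b), the converse. Using that $\Omega$ is completely regular, its topology is the initial topology induced by $\mathrm{C}_{\operatorname{b}}(\Omega)$, so $\phi\colon[0,\infty)\times\Omega\to\Omega$ is continuous if and only if $F_{g}\colon (t,x)\mapsto g(\phi_{t}(x))=(T_{\phi}(t)g)(x)$ is continuous for every $g\in\mathrm{C}_{\operatorname{b}}(\Omega)$. From $\beta_{0}$-strong continuity and $\tau_{\operatorname{co}}\leq\beta_{0}$ I obtain that $t\mapsto (T_{\phi}(t)g)|_{K}$ is continuous into $(C(K),\|\cdot\|_{\infty})$ for each compact $K$. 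Joint continuity of the evaluation $(C(K),\|\cdot\|_{\infty})\times K\to\R$ then makes $F_{g}$ continuous on every product $J\times K$ of compacta $J\subset[0,\infty)$, $K\subset\Omega$, and hence on every compact subset of $[0,\infty)\times\Omega$. To pass to continuity of $F_{g}$ on all of $[0,\infty)\times\Omega$ I would invoke that $[0,\infty)\times\Omega$ is itself a $k_{\R}$-space, being the product of the locally compact space $[0,\infty)$ with the $k_{\R}$-space $\Omega$. The main obstacle is exactly this topological input: weakening the $k$-space hypothesis of \cite{farkas2020} to a $k_{\R}$-hypothesis forces one to use that a product of a locally compact Hausdorff space with a $k_{\R}$-space is again a $k_{\R}$-space (so that $\R$-valued continuity can be tested on compacta), rather than the more familiar statement for $k$-spaces; everything else in part (b) is the bookkeeping above.
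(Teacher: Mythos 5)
Your proof is correct, but it is organised differently from the paper's, which for parts (a) and (b) simply cites \cite[Proposition 2.10, p.~5]{farkas2020} and indicates the single change needed: replacing the fact that $[0,\infty)\times\Omega$ is a completely regular $k$-space (for $k$-spaces $\Omega$) by the theorem of \cite{buchwalter1972} that $[0,\infty)\times\Omega$ is a $k_{\R}$-space whenever $\Omega$ is one, $[0,\infty)$ being locally compact Hausdorff. The most interesting divergence is in part (a): the paper first imports $\beta_{0}$-strong continuity and local $\beta_{0}$-equicontinuity from the cited result and then deduces local $(\|\cdot\|_{\infty},\tau_{\operatorname{co}})$-equitightness from the equicontinuity via \prettyref{prop:mixed_equicont}, using that the $T_{\phi}(t)$ are contractions and $\gamma=\gamma_{s}$ by \prettyref{ex:mixed=submixed} (a); you run the implication in the opposite direction, proving equitightness directly from the compactness of $\widetilde{K}:=\phi([0,t_{0}]\times K)$ through the estimate $p_{K}(T_{\phi}(t)f)\leq p_{\widetilde{K}}(f)$ (which even dispenses with the $\varepsilon\|x\|$-term), and then upgrading to local $\beta_{0}$-equicontinuity by the same proposition. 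Your direction is more elementary and self-contained, your direct compact-open argument for $\beta_{0}$-strong continuity likewise avoids the citation, and it makes transparent that part (a) needs no $k_{\R}$-hypothesis. For part (b) you essentially reconstruct the proof the paper outsources: the reduction via complete regularity (the topology of $\Omega$ is the initial topology induced by $\mathrm{C}_{\operatorname{b}}(\Omega)$), continuity on products of compacta via the jointly continuous evaluation map, and then the same key external input, the product theorem for $k_{\R}$-spaces, which you correctly single out as the crux but, like the paper, do not prove. Parts (c) and (d) coincide with the paper's proofs; you correctly observe that (c) needs only part (a) together with \prettyref{rem:mixed_top_bi_cont} (c) and the validity of \prettyref{ass:standard} for $k_{\R}$-spaces, whereas the paper also invokes part (b) there, which is not strictly necessary.
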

\begin{proof}
The proof of part (a) and (b), except for the local $(\|\cdot\|_{\infty},\tau_{\operatorname{co}})$-equitightness, 
is the same as in \cite[Proposition 2.10, p.~5]{farkas2020}. We only have to replace the fact that 
$[0,\infty)\times\Omega$ is a completely regular $k$-space for completely regular $k$-spaces $\Omega$ 
by the fact that $[0,\infty)\times\Omega$ is a $k_{\R}$-space for $k_{\R}$-spaces $\Omega$, which follows from 
a comment after the proof of \cite[Th\'eor\`{e}me (2.1), p.~54-55]{buchwalter1972} since 
$[0,\infty)$ is a locally compact Hausdorff space. 

Let us turn to the local $(\|\cdot\|_{\infty},\tau_{\operatorname{co}})$-equitightness in part (a). We note that 
\[
\|T_{\phi}(t)\|_{\infty}=\sup_{x\in\Omega}|f(\phi_{t}(x))|\leq \|f\|_{\infty},
\quad f\in \mathrm{C}_{\operatorname{b}}(\Omega),
\]
for all $t\geq 0$. Hence we get that 
$(S(t))_{t\in[0,t_{0}]}:=(T_{\phi}(t))_{t\in[0,t_{0}]}$ is 
$(\|\cdot\|_{\infty},\tau_{\operatorname{co}})$-equitight for any $t_{0}\geq 0$ by 
\prettyref{prop:mixed_equicont}, \prettyref{ex:mixed=submixed} (a) and the local $\beta_{0}$-equicontinuity 
of $(T_{\phi}(t))_{t\geq 0}$. Thus $(T_{\phi}(t))_{t\geq 0}$ is locally 
$(\|\cdot\|_{\infty},\tau_{\operatorname{co}})$-equitight.

We turn to part (c). The triple 
$(\mathrm{C}_{\operatorname{b}}(\Omega),\|\cdot\|_{\infty},\tau_{\operatorname{co}})$ 
fulfils \prettyref{ass:standard} if $\Omega$ is a $k_{\R}$-space 
(see the remarks above \prettyref{thm:strict_topo_quasi_equicont}).  
Now, part (a) and (b) and \prettyref{rem:mixed_top_bi_cont} (c) yield 
that $(T_{\phi}(t))_{t\geq 0}$ is $\tau_{\operatorname{co}}$-bi-continuous if $\Omega$ is a $k_{\R}$-space 
and $\phi$ (jointly) continuous.

Finally, part (d) follows from part (c) and \prettyref{thm:strict_topo_quasi_equicont}.
\end{proof}

Part (d) improves \cite[2.3 Theorem, p.~5]{dorroh1993}
where it was already observed that the Koopman semigroup is quasi-$\beta_{0}$-equicontinuous if $\Omega$ is Polish. 
Furthermore, the left translation semigroup in \prettyref{ex:strict_topo_semigroups} (a) is a special case of part (d) 
with $\phi(t,x):=t+x$ and $\Omega:=\R$.

Now, we focus on bi-continuous semigroups on the space $\mathrm{M}_{\operatorname{t}}(\Omega)$ of bounded 
Radon measures on a completely regular Hausdorff space $\Omega$ and generalise 
\cite[Theorems 3.5, 3.6, p.~318]{farkas2011}. 
We recall that for a semigroup $(T(t))_{t\geq 0}$ in $\mathcal{L}(X)$, $X$ Banach, the dual 
semigroup $(T'(t))_{t\geq 0}$ in $\mathcal{L}(X')$ is defined by $\langle T'(t)y,x\rangle:=y(T(t)x)$ 
for $y\in X'$ and $x\in X$.
Due to \prettyref{cor:dual_mixed_top} (a) we get that the triple 
$(\mathrm{M}_{\operatorname{t}}(\Omega),\|\cdot\|_{\mathrm{M}_{\operatorname{t}}(\Omega)},\sigma(\mathrm{M}_{\operatorname{t}}(\Omega),\mathrm{C}_{\operatorname{b}}(\Omega)))$ 
is a Saks space, which yields that \prettyref{ass:standard} (i) and (iii) are fulfilled. 
If $\Omega$ is a hemicompact Hausdorff $k_{\R}$-space or Polish, 
then \prettyref{ass:standard} (ii) is also fulfilled by \prettyref{rem:mixed_top} (b) because 
$(\mathrm{M}_{\operatorname{t}}(\Omega),\beta_{0}')$ is sequentially complete by \prettyref{cor:dual_mixed_top} (b).

\begin{thm}\label{thm:measure_mixed_top_bi_cont}
Let $\Omega$ be a hemicompact Hausdorff $k_{\R}$-space or a Polish space, and 
$(T(t))_{t\geq 0}$ a $\tau_{\operatorname{co}}$-bi-continuous semigroup on $\mathrm{C}_{\operatorname{b}}(\Omega)$. 
Then the semigroup $(T^{\circ}(t))_{t\geq 0}$ defined by 
$T^{\circ}(t):=T'(t)_{\mid \mathrm{M}_{\operatorname{t}}(\Omega)}$ for $t\geq 0$ is a 
$\sigma(\mathrm{M}_{\operatorname{t}}(\Omega),\mathrm{C}_{\operatorname{b}}(\Omega))$-bi-continuous semigroup 
on $\mathrm{M}_{\operatorname{t}}(\Omega)$.
\end{thm}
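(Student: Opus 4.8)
The plan is to verify, for the family $(T^{\circ}(t))_{t\geq 0}$ and the weak$^{\ast}$-topology $\sigma:=\sigma(\mathrm{M}_{\operatorname{t}}(\Omega),\mathrm{C}_{\operatorname{b}}(\Omega))$, the four defining properties of a $\sigma$-bi-continuous semigroup listed in \prettyref{defn:bi_continuous}, after first settling well-definedness. Since $\Omega$ is a hemicompact Hausdorff $k_{\R}$-space or Polish, the space $(\mathrm{C}_{\operatorname{b}}(\Omega),\beta_{0})$ is C-sequential by \prettyref{rem:mixed_equicont} (a), so \prettyref{thm:mixed_cont} (a) yields that every $T(t)$ is $\beta_{0}$-continuous. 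Hence its adjoint preserves the $\beta_{0}$-dual, i.e.~$T'(t)\nu=\nu\circ T(t)\in(\mathrm{C}_{\operatorname{b}}(\Omega),\beta_{0})'=\mathrm{M}_{\operatorname{t}}(\Omega)$ for $\nu\in\mathrm{M}_{\operatorname{t}}(\Omega)$, so $T^{\circ}(t)$ is well-defined, and $T^{\circ}(t)\in\mathcal{L}(\mathrm{M}_{\operatorname{t}}(\Omega))$ with $\|T^{\circ}(t)\|\leq\|T(t)\|$ because the total variation norm is the restriction of the dual norm. This already gives the exponential boundedness \prettyref{defn:bi_continuous} (iii); the semigroup law \prettyref{defn:bi_continuous} (i) follows from $(T(t)T(s))'=T'(s)T'(t)$ and the commutativity of the $T(t)$.

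For the $\sigma$-strong continuity \prettyref{defn:bi_continuous} (ii) I would fix $\nu\in\mathrm{M}_{\operatorname{t}}(\Omega)$ and $f\in\mathrm{C}_{\operatorname{b}}(\Omega)$ and show that $t\mapsto\langle T^{\circ}(t)\nu,f\rangle=\nu(T(t)f)$ is continuous. For $t_{k}\to t_{0}$ the $\tau_{\operatorname{co}}$-strong continuity of $(T(t))_{t\geq 0}$ gives $T(t_{k})f\to T(t_{0})f$ in $\tau_{\operatorname{co}}$, while the bound $\|T(t)f\|_{\infty}\leq Me^{\omega t}\|f\|_{\infty}$ on a compact $t$-interval keeps this sequence $\|\cdot\|_{\infty}$-bounded. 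As $\nu\in\mathrm{M}_{\operatorname{t}}(\Omega)$ is $\tau_{\operatorname{co}}$-sequentially continuous on $\|\cdot\|_{\infty}$-bounded sets by \eqref{eq:dual_mixed_top} in \prettyref{rem:dual_mixed_top}, we obtain $\nu(T(t_{k})f)\to\nu(T(t_{0})f)$, and metrisability of $[0,\infty)$ upgrades this sequential continuity to continuity.

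The main step, and the only place where the hypotheses on $\Omega$ genuinely enter, is the local bi-equicontinuity \prettyref{defn:bi_continuous} (iv). Let $(\nu_{n})_{n\in\N}$ be $\|\cdot\|_{\mathrm{M}_{\operatorname{t}}(\Omega)}$-bounded with $\sigma\text{-}\lim_{n}\nu_{n}=\nu$ and put $\mu_{n}:=\nu_{n}-\nu$. Applying \prettyref{rem:mixed_top_bi_cont} (a) to the Saks space of \prettyref{cor:dual_mixed_top} (a), whose mixed topology is $\beta_{0}'$, the bounded $\sigma$-null sequence $(\mu_{n})_{n\in\N}$ is already $\beta_{0}'$-null. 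The crucial ingredient is \prettyref{cor:dual_mixed_top} (b), which identifies $\beta_{0}'=\tau_{\operatorname{c}}(\mathrm{M}_{\operatorname{t}}(\Omega),(\mathrm{C}_{\operatorname{b}}(\Omega),\beta_{0}))$ as the topology of uniform convergence on $\beta_{0}$-compact subsets of $\mathrm{C}_{\operatorname{b}}(\Omega)$. For fixed $f$ and $t_{0}\geq 0$ the set $K:=\{T(t)f\;|\;t\in[0,t_{0}]\}$ is $\tau_{\operatorname{co}}$-compact (by $\tau_{\operatorname{co}}$-strong continuity and compactness of $[0,t_{0}]$) and $\|\cdot\|_{\infty}$-bounded, hence $\beta_{0}$-compact because $\beta_{0}$ and $\tau_{\operatorname{co}}$ coincide on $\|\cdot\|_{\infty}$-bounded sets. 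Therefore
\[
\sup_{t\in[0,t_{0}]}|\langle T^{\circ}(t)\mu_{n},f\rangle|=\sup_{t\in[0,t_{0}]}|\mu_{n}(T(t)f)|=\sup_{g\in K}|\mu_{n}(g)|\longrightarrow 0
\]
as $n\to\infty$, which is exactly the required locally uniform $\sigma$-convergence of $T^{\circ}(t)\mu_{n}$ to $0$.

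Finally I would record that the target triple $(\mathrm{M}_{\operatorname{t}}(\Omega),\|\cdot\|_{\mathrm{M}_{\operatorname{t}}(\Omega)},\sigma)$ satisfies \prettyref{ass:standard} — it is a Saks space with $(\mathrm{M}_{\operatorname{t}}(\Omega),\beta_{0}')$ sequentially complete by \prettyref{cor:dual_mixed_top} (a), (b) — so that \prettyref{defn:bi_continuous} is applicable on it. The genuinely nontrivial point is the representation of $\beta_{0}'$ in \prettyref{cor:dual_mixed_top} (b): once uniform convergence on $\beta_{0}$-compacta is available, local bi-equicontinuity drops out almost formally, whereas properties (i)--(iii) are inherited from $(T(t))_{t\geq 0}$ by routine duality and sequential-continuity arguments. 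Note that \prettyref{cor:dual_mixed_top} (a) alone would not suffice, since $K$ is $\beta_{0}$-compact but in general not $\|\cdot\|_{\infty}$-compact, which is precisely why the stronger conclusion (b) is needed.
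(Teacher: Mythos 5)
Your proof is correct, but it takes a genuinely different route from the paper's. The paper's own proof is a three-line reduction to the literature: it checks that \cite[Hypotheses B, C, p.~314-315]{farkas2011} are satisfied --- Hypothesis B via the sequential completeness of $(\mathrm{M}_{\operatorname{t}}(\Omega),\beta_{0}')$ together with \eqref{eq:dual_mixed_top}, Hypothesis C via \prettyref{rem:dual_mixed_top} --- and then invokes \cite[Proposition 2.4, p.~315]{farkas2011} as a black box to obtain the statement. You instead verify the conditions of \prettyref{defn:bi_continuous} by hand: well-definedness, (i) and (iii) by duality from the $\beta_{0}$-continuity of each $T(t)$ (\prettyref{thm:mixed_cont} (a) plus C-sequentiality), (ii) from the fact that elements of $\mathrm{M}_{\operatorname{t}}(\Omega)=(\mathrm{C}_{\operatorname{b}}(\Omega),\beta_{0})'$ are $\tau_{\operatorname{co}}$-sequentially continuous on $\|\cdot\|_{\infty}$-bounded sets, and (iv) --- the real content --- from the identification $\beta_{0}'=\tau_{\operatorname{c}}(\mathrm{M}_{\operatorname{t}}(\Omega),(\mathrm{C}_{\operatorname{b}}(\Omega),\beta_{0}))$ of \prettyref{cor:dual_mixed_top} (b) combined with the $\beta_{0}$-compactness of the orbit segments $\{T(t)f\;|\;t\in[0,t_{0}]\}$. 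In effect you re-prove \cite[Proposition 2.4, p.~315]{farkas2011} in this concrete setting; your argument and the paper's draw on the same structural inputs (both \prettyref{cor:dual_mixed_top} (b) and \prettyref{rem:dual_mixed_top} rest on the strong Mackey property of $(\mathrm{C}_{\operatorname{b}}(\Omega),\beta_{0})$ and the $\sigma(\mathrm{M}_{\operatorname{t}}(\Omega),\mathrm{C}_{\operatorname{b}}(\Omega))$-compactness of the dual unit ball), but yours is self-contained modulo the paper's own results and makes explicit where each hypothesis on $\Omega$ enters. Your closing observation that part (a) of \prettyref{cor:dual_mixed_top} would not suffice for (iv), since the orbits are $\beta_{0}$-compact but in general not $\|\cdot\|_{\infty}$-compact, is exactly the point that forces the hemicompact-$k_{\R}$/Polish assumption. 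The paper's route buys brevity and reuse of a known result; yours buys transparency and independence from \cite{farkas2011}.
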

\begin{proof}
Due to the sequential completeness of $(\mathrm{M}_{\operatorname{t}}(\Omega),\beta_{0}')$ 
and \eqref{eq:dual_mixed_top} we have that \cite[Hypothesis B, p.~314]{farkas2011} is satisfied. 
By \prettyref{rem:dual_mixed_top} we already know that \cite[Hypothesis C, p.~315]{farkas2011} is fulfilled. 
Hence we may apply \cite[Proposition 2.4, p.~315]{farkas2011}, yielding our statement.
\end{proof}

So we get examples of such semigroups $(T^{\circ}(t))_{t\geq 0}$ by using the semigroups 
$(T(t))_{t\geq 0}$ from \prettyref{ex:strict_topo_semigroups} 
and \prettyref{cor:koopman} (d). Actually, all $\sigma(\mathrm{M}_{\operatorname{t}}(\Omega),\mathrm{C}_{\operatorname{b}}(\Omega))$-bi-continuous semigroups 
on $\mathrm{M}_{\operatorname{t}}(\Omega)$ are of this form if $\Omega$ is a 
hemicompact Hausdorff $k_{\R}$-space or Polish since we have the following 
converse of \prettyref{thm:measure_mixed_top_bi_cont}, which is already known in the case that 
$\Omega$ is Polish (see \cite[Theorem 3.6, p.~318]{farkas2011}).

\begin{thm}\label{thm:measure_mixed_top_bi_cont_converse}
Let $\Omega$ be a hemicompact Hausdorff $k_{\R}$-space or a Polish space, and 
$(S(t))_{t\geq 0}$ a $\sigma(\mathrm{M}_{\operatorname{t}}(\Omega),\mathrm{C}_{\operatorname{b}}(\Omega))$-bi-continuous semigroup on $\mathrm{M}_{\operatorname{t}}(\Omega)$.
Then there is a $\tau_{\operatorname{co}}$-bi-continuous semigroup $(T(t))_{t\geq 0}$ on 
$\mathrm{C}_{\operatorname{b}}(\Omega)$ such that $T^{\circ}(t)=S(t)$ for all $t\geq 0$.
\end{thm}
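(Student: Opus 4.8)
The plan is to realise $(T(t))_{t\ge0}$ as the pre-adjoint semigroup of $(S(t))_{t\ge0}$ with respect to the dual pair $\langle \mathrm{C}_{\operatorname{b}}(\Omega),\mathrm{M}_{\operatorname{t}}(\Omega)\rangle$, where $\mathrm{M}_{\operatorname{t}}(\Omega)=(\mathrm{C}_{\operatorname{b}}(\Omega),\beta_{0})'$ and, by \prettyref{cor:dual_mixed_top} and \prettyref{rem:dual_mixed_top}, $\mathrm{C}_{\operatorname{b}}(\Omega)=(\mathrm{M}_{\operatorname{t}}(\Omega),\beta_{0}')'$. Concretely, I would like to set
\[
\langle T(t)f,\nu\rangle:=\langle f,S(t)\nu\rangle,\quad f\in\mathrm{C}_{\operatorname{b}}(\Omega),\ \nu\in\mathrm{M}_{\operatorname{t}}(\Omega),
\]
so that $T(t)$ is the restriction to $\mathrm{C}_{\operatorname{b}}(\Omega)$ of the norm-adjoint of $S(t)$, whence $\|T(t)\|_{\mathcal{L}(\mathrm{C}_{\operatorname{b}}(\Omega))}\le\|S(t)\|_{\mathcal{L}(\mathrm{M}_{\operatorname{t}}(\Omega))}$ for free. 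For this definition to make sense I must know that the adjoint of $S(t)$ maps $\mathrm{C}_{\operatorname{b}}(\Omega)$ into itself, i.e.\ that each $S(t)$ is $\sigma(\mathrm{M}_{\operatorname{t}}(\Omega),\mathrm{C}_{\operatorname{b}}(\Omega))$-continuous.

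The first real step is thus the weak$^{\ast}$-continuity of each $S(t)$. By \prettyref{cor:dual_mixed_top}~(a) we have $\gamma=\gamma_{s}=\beta_{0}'$ for the Saks space $(\mathrm{M}_{\operatorname{t}}(\Omega),\|\cdot\|_{\mathrm{M}_{\operatorname{t}}(\Omega)},\sigma(\mathrm{M}_{\operatorname{t}}(\Omega),\mathrm{C}_{\operatorname{b}}(\Omega)))$, and each $S(t)$ is norm-bounded; hence \prettyref{prop:mixed_cont} applies and $\beta_{0}'$-continuity of $S(t)$ (condition (g), which is equivalent to its transpose preserving $(\mathrm{M}_{\operatorname{t}}(\Omega),\beta_{0}')'=\mathrm{C}_{\operatorname{b}}(\Omega)$, that is, to weak$^{\ast}$-continuity) is equivalent to $S(t)$ being $\sigma$-continuous at $0$ on the $\|\cdot\|_{\mathrm{M}_{\operatorname{t}}(\Omega)}$-unit ball (condition (e)). Since $(S(t))_{t\ge0}$ is $\sigma$-bi-continuous, \prettyref{rem:mixed_top_bi_cont}~(a) already gives that each $S(t)$ is $\sigma$-\emph{sequentially} continuous on that ball. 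When $\Omega$ is Polish, $\sigma$ is metrisable on the ($\sigma$-compact) ball by \prettyref{cor:dual_mixed_top}~(c), so sequential continuity upgrades to continuity; indeed $(\mathrm{M}_{\operatorname{t}}(\Omega),\beta_{0}')$ is then C-sequential and \prettyref{thm:mixed_cont}~(a) yields $\beta_{0}'$-continuity of each $S(t)$ directly.

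The hard part is the hemicompact $k_{\R}$ case, where metrisability of the ball (and hence C-sequentiality of $(\mathrm{M}_{\operatorname{t}}(\Omega),\beta_{0}')$) is not available, so the gap between sequential and topological $\sigma$-continuity on the ball must be bridged differently. Here I would invoke the abstract framework of \cite{farkas2011}: in the proof of \prettyref{thm:measure_mixed_top_bi_cont} it was verified, for $\Omega$ a hemicompact Hausdorff $k_{\R}$-space or Polish, that \cite[Hypotheses B, C]{farkas2011} hold (sequential completeness of $(\mathrm{M}_{\operatorname{t}}(\Omega),\beta_{0}')$ from \prettyref{cor:dual_mixed_top}~(b), and the $\beta_{0}$-equicontinuity of $\beta_{0}'$-null sequences from \prettyref{rem:dual_mixed_top}). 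These are exactly the hypotheses under which \cite[Theorem 3.6, p.~318]{farkas2011} produces the weak$^{\ast}$-continuous pre-adjoint and establishes its bi-continuity, so the Polish argument given there transfers verbatim; the strong Mackey property of \prettyref{rem:mackey_mazur}~(a) is what replaces metrisability of the ball.

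It remains to assemble the semigroup and transfer its properties. From $S(t)S(s)=S(t+s)=S(s)S(t)$ and $S(0)=\id$, taking pre-adjoints yields $T(t)T(s)=T(s)T(t)=T(t+s)$ and $T(0)=\id$, while exponential boundedness is inherited from $\|T(t)\|\le\|S(t)\|$. The $\tau_{\operatorname{co}}$-strong continuity of $t\mapsto T(t)f$ and the local bi-equicontinuity of $(T(t))_{t\ge0}$ follow by dualising the corresponding $\sigma$-properties of $(S(t))_{t\ge0}$, using completeness of $(\mathrm{C}_{\operatorname{b}}(\Omega),\beta_{0})$ (valid since $\Omega$ is a $k_{\R}$-space) together with the strong Mackey property; this is precisely the bi-continuity assertion of \cite[Theorem 3.6]{farkas2011} under Hypotheses B and C. Finally, $T^{\circ}(t)=T'(t)_{\mid\mathrm{M}_{\operatorname{t}}(\Omega)}=S(t)$ is immediate, since $\langle f,T'(t)\nu\rangle=\langle T(t)f,\nu\rangle=\langle f,S(t)\nu\rangle$ for all $f\in\mathrm{C}_{\operatorname{b}}(\Omega)$ and $\nu\in\mathrm{M}_{\operatorname{t}}(\Omega)$, which recovers $(T(t))_{t\ge0}$ as the desired pre-image.
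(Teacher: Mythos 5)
Your proposal is correct and follows essentially the same route as the paper: the paper's proof likewise obtains $(T(t))_{t\geq 0}$ as the pre-adjoint by transferring the argument of \cite[Theorem 3.6, p.~318]{farkas2011}, replacing its Polish-specific inputs \cite[Theorem 3.5, p.~318]{farkas2011} and \cite[Theorem 3.1 c), p.~316-317]{farkas2011} by \prettyref{thm:measure_mixed_top_bi_cont} and \prettyref{rem:dual_mixed_top} in the hemicompact $k_{\R}$ case, which is exactly your reduction to Hypotheses B and C. Your separate treatment of the Polish case via C-sequentiality (\prettyref{cor:dual_mixed_top}~(c) together with \prettyref{thm:mixed_cont}~(a)) is a harmless refinement of what \cite{farkas2011} already covers directly.
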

\begin{proof}
The proof is almost the same as for \cite[Theorem 3.6, p.~318]{farkas2011}. We only need to replace the applications 
of \cite[Theorem 3.5, p.~318]{farkas2011} and \cite[Theorem 3.1 c), p.~316-317]{farkas2011} by 
\prettyref{thm:measure_mixed_top_bi_cont} and \prettyref{rem:dual_mixed_top}, respectively, if $\Omega$ is a hemicompact Hausdorff $k_{\R}$-space.
\end{proof}

\begin{thm}\label{thm:measure_mixed_top_equitight}
Let $\Omega$ be a Polish space and $(T(t))_{t\geq 0}$ a 
$\sigma(\mathrm{M}_{\operatorname{t}}(\Omega),\mathrm{C}_{\operatorname{b}}(\Omega))$-bi-continuous semigroup 
on $\mathrm{M}_{\operatorname{t}}(\Omega)$. Then $(T(t))_{t\geq 0}$ is quasi-$\beta_{0}'$-equicontinuous and 
quasi-$(\|\cdot\|_{\mathrm{M}_{\operatorname{t}}(\Omega)},\sigma(\mathrm{M}_{\operatorname{t}}(\Omega),\mathrm{C}_{\operatorname{b}}(\Omega)))$-equitight.
\end{thm}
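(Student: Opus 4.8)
The plan is to recognise this as a direct application of the abstract equicontinuity/equitightness results of \prettyref{sect:continuity} to the Saks space
$(\mathrm{M}_{\operatorname{t}}(\Omega),\|\cdot\|_{\mathrm{M}_{\operatorname{t}}(\Omega)},\tau)$ with
$\tau:=\sigma(\mathrm{M}_{\operatorname{t}}(\Omega),\mathrm{C}_{\operatorname{b}}(\Omega))$, entirely parallel to the proof of \prettyref{thm:strict_topo_quasi_equicont} on the $\mathrm{C}_{\operatorname{b}}(\Omega)$-side. Writing $\gamma:=\gamma(\|\cdot\|_{\mathrm{M}_{\operatorname{t}}(\Omega)},\tau)$, recall from \prettyref{cor:dual_mixed_top} (a) that $\gamma=\beta_{0}'$, so the two asserted conclusions are precisely quasi-$\gamma$-equicontinuity and quasi-$(\|\cdot\|_{\mathrm{M}_{\operatorname{t}}(\Omega)},\tau)$-equitightness.

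First I would verify that \prettyref{ass:standard} is satisfied, which is exactly the content of the remark preceding \prettyref{thm:measure_mixed_top_bi_cont}: (i) and (iii) hold because the triple is a Saks space by \prettyref{cor:dual_mixed_top} (a), while (ii) follows from the sequential completeness of $(\mathrm{M}_{\operatorname{t}}(\Omega),\beta_{0}')$ given by \prettyref{cor:dual_mixed_top} (b), via \prettyref{rem:mixed_top} (b). Hence $(T(t))_{t\geq 0}$ is a bona fide $\tau$-bi-continuous semigroup in a setting where the results of \prettyref{sect:continuity} are available.

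Next I would extract the two structural facts that the main results demand. From \prettyref{cor:dual_mixed_top} (a) we already have $\gamma=\gamma_{s}=\beta_{0}'$, so the coincidence $\gamma=\gamma_{s}$ is in place; and from \prettyref{cor:dual_mixed_top} (c), using that $\Omega$ is Polish, the topology $\tau$ is metrisable on the unit ball $B_{\|\cdot\|_{\mathrm{M}_{\operatorname{t}}(\Omega)}}$ (and $(\mathrm{M}_{\operatorname{t}}(\Omega),\beta_{0}')$ is C-sequential).

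Finally I would feed this into \prettyref{prop:mixed_equicont_main}, the Kraaij-free route, which applies precisely because metrisability of $\tau$ on the unit ball is now available: part (a) yields quasi-$\gamma$-equicontinuity, i.e.\ quasi-$\beta_{0}'$-equicontinuity, and part (b), using $\gamma=\gamma_{s}$, yields quasi-$(\|\cdot\|_{\mathrm{M}_{\operatorname{t}}(\Omega)},\tau)$-equitightness; alternatively \prettyref{thm:mixed_equicont_main} (a), (b) could be invoked through the C-sequentiality of $(\mathrm{M}_{\operatorname{t}}(\Omega),\beta_{0}')$. I do not anticipate a genuine obstacle, since all the substantial work is concentrated in \prettyref{cor:dual_mixed_top}; the only point requiring care is that both the metrisability and the C-sequentiality inputs are established in \prettyref{cor:dual_mixed_top} (c) only for Polish $\Omega$ (where $\beta$-simplicity is exploited), which is exactly why the theorem is stated for Polish rather than merely hemicompact Hausdorff $k_{\R}$-spaces.
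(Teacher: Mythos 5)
Your proposal is correct and is essentially the paper's own proof: the paper cites exactly \prettyref{prop:mixed_equicont_main} and \prettyref{cor:dual_mixed_top}, and your write-up simply makes explicit the intermediate steps the paper leaves implicit (verification of \prettyref{ass:standard} via the remark before \prettyref{thm:measure_mixed_top_bi_cont}, the identities $\gamma=\gamma_{s}=\beta_{0}'$ from part (a), and the metrisability of $\sigma(\mathrm{M}_{\operatorname{t}}(\Omega),\mathrm{C}_{\operatorname{b}}(\Omega))$ on the unit ball from part (c)). Your closing remark about why the Polish hypothesis cannot be weakened to hemicompact $k_{\R}$-spaces also matches the open question the paper records after \prettyref{cor:dual_mixed_top}.
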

\begin{proof}
Our statement is a consequence of \prettyref{prop:mixed_equicont_main} and \prettyref{cor:dual_mixed_top}.
\end{proof}

Let us turn to dual semigroups of norm-strongly continuous semigroups. 
If $(X,\|\cdot\|)$ is a Banach space, then 
$(X',\|\cdot\|_{X'},\sigma(X',X))$ is a Saks space by \prettyref{ex:mixed=submixed} (b). 
Thus \prettyref{ass:standard} (i) and (iii) are fulfilled. \prettyref{ass:standard} (ii) is also fulfilled 
by the Banach--Steinhaus theorem (see e.g.~\cite[Corollary, p.~348]{treves2006}). 

\begin{thm}\label{thm:dual_weak_topo_quasi_equicont}
Let $(X,\|\cdot\|)$ be a Banach space and $(T(t))_{t\geq 0}$ a $\|\cdot\|$-strongly continuous semigroup on $X$. 
If $X$ is separable, then the dual semigroup $(T'(t))_{t\geq 0}$ on $X'$ is 
quasi-$\tau_{\operatorname{c}}(X',X)$-equicontinuous and quasi-$(\|\cdot\|_{X'},\sigma(X',X))$-equitight. 
\end{thm}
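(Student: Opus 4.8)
The plan is to recognise $(T'(t))_{t\geq 0}$ as a $\sigma^{\ast}$-bi-continuous semigroup on $X'$, with $\sigma^{\ast}:=\sigma(X',X)$, and then to invoke \prettyref{prop:mixed_equicont_main}. As noted just before the theorem, $(X',\|\cdot\|_{X'},\sigma^{\ast})$ satisfies \prettyref{ass:standard}; moreover $\gamma(\|\cdot\|_{X'},\sigma^{\ast})=\tau_{\operatorname{c}}(X',X)$ by \prettyref{ex:mixed_top} (b), and $\gamma=\gamma_{s}$ by \prettyref{ex:mixed=submixed} (b) since $B_{\|\cdot\|_{X'}}$ is $\sigma^{\ast}$-compact by the Banach--Alaoglu theorem. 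Crucially, separability of $X$ makes $\sigma^{\ast}$ metrisable on $B_{\|\cdot\|_{X'}}$ by \prettyref{rem:mixed_equicont} (b), so both hypotheses of \prettyref{prop:mixed_equicont_main} (a) and (b) would be in place once the $\sigma^{\ast}$-bi-continuity of the dual semigroup is established.

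To verify the conditions of \prettyref{defn:bi_continuous} for $(T'(t))_{t\geq 0}$, the semigroup law and exponential boundedness are immediate from $(T(t)T(s))'=T'(t)T'(s)$, $T'(0)=\id$ and $\|T'(t)\|=\|T(t)\|\leq Me^{\omega t}$, while $\sigma^{\ast}$-strong continuity follows from $\langle T'(t)y,x\rangle=\langle y,T(t)x\rangle$ together with the $\|\cdot\|$-continuity of $t\mapsto T(t)x$. The step I expect to be the main obstacle is local bi-equicontinuity. Given a $\|\cdot\|_{X'}$-bounded sequence $(y_{n})_{n\in\N}$ with $\sigma^{\ast}$-$\lim_{n\to\infty}y_{n}=y$, I would set $z_{n}:=y_{n}-y$ and observe that for each fixed $x\in X$ the orbit $K:=\{T(t)x\;|\;t\in[0,t_{0}]\}$ is $\|\cdot\|$-compact, being a continuous image of the compact interval $[0,t_{0}]$; hence, using $\langle T'(t)z_{n},x\rangle=\langle z_{n},T(t)x\rangle$,
\[
\sup_{t\in[0,t_{0}]}|\langle T'(t)z_{n},x\rangle|=\sup_{w\in K}|\langle z_{n},w\rangle|.
\]
The decisive elementary fact is that a $\|\cdot\|_{X'}$-bounded $\sigma^{\ast}$-null sequence converges to $0$ uniformly on $\|\cdot\|$-compact subsets of $X$; this follows by covering $K$ with finitely many $\|\cdot\|$-balls of small radius and exploiting $\sup_{n\in\N}\|z_{n}\|_{X'}<\infty$. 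Applying it to $K$ and then taking the maximum over a finite subset of $X$ yields local bi-equicontinuity. This dual-semigroup construction is the classical prototype of a bi-continuous semigroup (cf.~\cite{kuehnemund2003}).

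With $\sigma^{\ast}$-bi-continuity in hand, \prettyref{prop:mixed_equicont_main} (a) gives that $(T'(t))_{t\geq 0}$ is quasi-$\gamma$-equicontinuous, that is quasi-$\tau_{\operatorname{c}}(X',X)$-equicontinuous in view of $\gamma=\tau_{\operatorname{c}}(X',X)$, while \prettyref{prop:mixed_equicont_main} (b), using $\gamma=\gamma_{s}$, gives quasi-$(\|\cdot\|_{X'},\sigma^{\ast})$-equitightness, which is the assertion.
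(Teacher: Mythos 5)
Your proposal is correct and takes essentially the same route as the paper: both recognise $(T'(t))_{t\geq 0}$ as a $\sigma(X',X)$-bi-continuous semigroup on $X'$ and then apply \prettyref{prop:mixed_equicont_main} together with \prettyref{rem:mixed_equicont} (b) and \prettyref{ex:mixed=submixed} (b), using $\gamma(\|\cdot\|_{X'},\sigma(X',X))=\tau_{\operatorname{c}}(X',X)$ from \prettyref{ex:mixed_top} (b). The only difference is that the paper cites \cite[Proposition 3.18, p.~78]{kuehnemund2001} for the bi-continuity of the dual semigroup, whereas you prove it inline; your argument (norm-compactness of the orbits $\{T(t)x\;|\;t\in[0,t_{0}]\}$ plus uniform convergence of norm-bounded weak$^{\ast}$-null sequences on norm-compact sets) is a correct rendering of that cited fact.
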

\begin{proof}
By \cite[Proposition 3.18, p.~78]{kuehnemund2001} $(T'(t))_{t\geq 0}$ is $\sigma(X',X)$-bi-continuous on $X'$. 
Due to \prettyref{ex:mixed_top} (b) we have $\gamma(\|\cdot\|_{X'},\sigma(X',X))=\tau_{\operatorname{c}}(X',X)$. 
For separable $X$ our statement follows from \prettyref{prop:mixed_equicont_main} combined with 
\prettyref{rem:mixed_equicont} (b) and \prettyref{ex:mixed=submixed} (b). 
\end{proof}

The theorem above actually covers all $\sigma(X',X)$-bi-continuous semigroups on $X'$ for separable $X$ 
due to \cite[Theorem 4.2, p.~158]{farkas2004a}, which says that we only need 
$\sigma(X',X)$-continuity on $\|\cdot\|_{X'}$-bounded sets of all members of the $\sigma(X',X)$-bi-continuous semigroup 
for the conclusion that a $\sigma(X',X)$-bi-continuous semigroup is the dual semigroup of a 
$\|\cdot\|$-strongly continuous semigroup on $X$. 
Due to \prettyref{thm:mixed_cont} (a) we have a nice condition on $(X',\tau_{\operatorname{c}}(X',X))$ 
that guarantees this kind of continuity. In particular this condition is satisfied for separable $X$ by 
\prettyref{rem:mixed_equicont} (b). 

\begin{prop}
Let $(X,\|\cdot\|)$ be a Banach space and $(T(t))_{t\geq 0}$ a $\sigma(X',X)$-bi-continuous semigroup. 
If $(X',\tau_{\operatorname{c}}(X',X))$ is C-sequential, then there is a $\|\cdot\|$-strongly continuous semigroup 
$(S(t))_{t\geq 0}$ on $X$ such that $S'(t)=T(t)$ for all $t\geq 0$. 
\end{prop}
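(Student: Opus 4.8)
The plan is to reduce the claim to the characterization of weak$^{\ast}$-bi-continuous semigroups as dual semigroups due to Farkas, by verifying that its continuity hypothesis is met. The whole setting is that of \prettyref{thm:mixed_cont} and \prettyref{prop:mixed_cont} applied to the triple $(X',\|\cdot\|_{X'},\sigma(X',X))$, so the first thing I would record is that this triple satisfies \prettyref{ass:standard}, as noted in the remarks preceding \prettyref{thm:dual_weak_topo_quasi_equicont}, and that its mixed topology is $\gamma(\|\cdot\|_{X'},\sigma(X',X))=\tau_{\operatorname{c}}(X',X)$ by \prettyref{ex:mixed_top} (b). Under this identification the hypothesis that $(X',\tau_{\operatorname{c}}(X',X))$ is C-sequential is exactly the C-sequentiality assumption on $(X',\gamma)$ appearing in \prettyref{thm:mixed_cont}, with $X'$ and $\sigma(X',X)$ playing the roles of the underlying Banach space and the weaker topology.

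With these identifications in place, I would apply \prettyref{thm:mixed_cont} (a) to the $\sigma(X',X)$-bi-continuous semigroup $(T(t))_{t\geq 0}$, which yields that $T(t)$ is $\gamma$-continuous (equivalently, $\tau_{\operatorname{c}}(X',X)$-continuous) for every $t\geq 0$. The crucial link to Farkas' hypothesis is then supplied by \prettyref{prop:mixed_cont}: the implications $(g)\Rightarrow(f)\Leftrightarrow(d)$ turn $\gamma$-continuity of $T(t)$ into the statement that $T(t)$ is $\sigma(X',X)$-continuous on every $\|\cdot\|_{X'}$-bounded subset of $X'$, for each $t\geq 0$.

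Finally, this is precisely the continuity of all semigroup members on norm-bounded sets required by \cite[Theorem 4.2, p.~158]{farkas2004a}. Invoking that result for $(T(t))_{t\geq 0}$ produces a $\|\cdot\|$-strongly continuous semigroup $(S(t))_{t\geq 0}$ on $X$ with $S'(t)=T(t)$ for all $t\geq 0$, which is the assertion. I do not expect a genuine obstacle: the argument is an assembly of \prettyref{thm:mixed_cont} (a), the implication chain of \prettyref{prop:mixed_cont}, and the external characterization of Farkas, the only care being the bookkeeping of applying \prettyref{thm:mixed_cont} and \prettyref{prop:mixed_cont} with $X'$ in the role of the Banach space. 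The conceptual weight sits entirely in \prettyref{thm:mixed_cont} (a), where C-sequentiality is what upgrades the $\sigma(X',X)$-sequential continuity delivered by local bi-equicontinuity to genuine $\gamma$-continuity.
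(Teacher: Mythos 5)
Your proposal is correct and follows essentially the same route as the paper: apply \prettyref{thm:mixed_cont} (a) to the triple $(X',\|\cdot\|_{X'},\sigma(X',X))$ under the C-sequentiality hypothesis, pass from $\gamma$-continuity of each $T(t)$ to $\sigma(X',X)$-continuity on $\|\cdot\|_{X'}$-bounded sets via the implication chain in \prettyref{prop:mixed_cont}, and conclude with \cite[Theorem 4.2, p.~158]{farkas2004a}. The only difference is that you spell out the verification that the triple satisfies \prettyref{ass:standard} and the identification $\gamma=\tau_{\operatorname{c}}(X',X)$, which the paper leaves implicit (having recorded it just before \prettyref{thm:dual_weak_topo_quasi_equicont}).
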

\begin{proof}
If $(X',\tau_{\operatorname{c}}(X',X))$ is C-sequential, then by \prettyref{thm:mixed_cont} (a) and 
the equivalence (d)$\Leftrightarrow$(g) of \prettyref{prop:mixed_cont} every $T(t)$ is $\sigma(X',X)$-continuous 
on $\|\cdot\|_{X'}$-bounded sets. Hence the existence of $(S(t))_{t\geq 0}$ follows from 
\cite[Theorem 4.2, p.~158]{farkas2004a}. 
\end{proof}

Let us turn to $\mu(X',X)$-bi-continuous semigroups. 
If $(X,\|\cdot\|)$ is a Banach space, then 
$(X',\|\cdot\|_{X'},\mu(X',X))$ is a Saks space by \prettyref{ex:mixed=submixed} (c). 
Thus \prettyref{ass:standard} (i) and (iii) are fulfilled. Again, \prettyref{ass:standard} (ii) is fulfilled 
by a consequence of the Banach--Steinhaus theorem (see e.g.~\cite[Corollary 2, p.~356]{treves2006}).

\begin{thm}\label{thm:dual_mackey_topo_quasi_equicont}
Let $(X,\|\cdot\|)$ be a Banach space and $(T(t))_{t\geq 0}$ a $\mu(X',X)$-bi-continuous semigroup on $X'$.  
\begin{enumerate}
\item[(a)] If $X$ is an SWCG space, or a weakly sequentially complete space with an almost shrinking basis, 
then $(T(t))_{t\geq 0}$ is quasi-$\mu(X',X)$-equicontinuous. 
\item[(b)] If $X$ is an SWCG Schur space, e.g.~a separable Schur space, then $(T(t))_{t\geq 0}$ 
is quasi-$\mu(X',X)$-equicontinuous and quasi-$(\|\cdot\|_{X'},\mu(X',X))$-equitight.
\end{enumerate}
\end{thm}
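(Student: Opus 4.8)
The plan is to recognize that this theorem is a direct application of the main result \prettyref{thm:mixed_equicont_main}, so the entire proof reduces to verifying its hypotheses under each of the stated assumptions on $X$. The crucial preliminary observation is that, by \prettyref{ex:mixed_top} (c), the mixed topology here satisfies $\gamma(\|\cdot\|_{X'},\mu(X',X))=\mu(X',X)$; that is, the Saks space $(X',\|\cdot\|_{X'},\mu(X',X))$ has $\mu(X',X)$ as its own mixed topology. Consequently, ``quasi-$\mu(X',X)$-equicontinuous'' \emph{is} ``quasi-$\gamma$-equicontinuous'', and the two conclusions of the theorem correspond exactly to the two parts of \prettyref{thm:mixed_equicont_main}. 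Thus I only need to check C-sequentiality of $(X',\gamma)$ for part (a), and additionally $\gamma=\gamma_s$ for part (b).

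For part (a), I would handle the two sufficient conditions separately, each time invoking \prettyref{prop:mixed_topology_sequential} to land in the C-sequential case. If $X$ is SWCG, then $\mu(X',X)$ is metrisable on $B_{\|\cdot\|_{X'}}$ and $(X',\gamma)$ is C-sequential by \prettyref{rem:mixed_equicont} (c) (equivalently, via condition (i) of \prettyref{prop:mixed_topology_sequential}). If instead $X$ is weakly sequentially complete with an almost shrinking basis, then $(X',\mu(X',X))=(X',\gamma)$ is a Mackey--Mazur space by \prettyref{rem:mackey_mazur} (c), so condition (ii) of \prettyref{prop:mixed_topology_sequential} applies and again yields that $(X',\gamma)$ is C-sequential. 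In either case \prettyref{thm:mixed_equicont_main} (a) gives quasi-$\mu(X',X)$-equicontinuity, proving part (a).

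For part (b), I would note first that an SWCG Schur space is in particular SWCG, so part (a) already supplies both C-sequentiality of $(X',\gamma)$ and the quasi-$\mu(X',X)$-equicontinuity claimed. The additional ingredient needed for equitightness is $\gamma=\gamma_s$, and this is precisely where the Schur property enters: by \prettyref{ex:mixed=submixed} (c), $B_{\|\cdot\|_{X'}}$ is $\mu^{\ast}$-compact when $X$ is Schur, whence $\gamma(\|\cdot\|_{X'},\mu^{\ast})=\gamma_s(\|\cdot\|_{X'},\mu^{\ast})$. With C-sequentiality and $\gamma=\gamma_s$ both in hand, \prettyref{thm:mixed_equicont_main} (b) delivers quasi-$(\|\cdot\|_{X'},\mu(X',X))$-equitightness. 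Finally, the ``e.g.'' clause is settled by recalling that separable Schur spaces are SWCG (as listed among the examples following \prettyref{ex:mixed=submixed}), so they fall under the SWCG Schur hypothesis. There is essentially no analytic obstacle here, as all the difficulty was absorbed into \prettyref{thm:mixed_equicont_main} and the classification results; the only point requiring care is matching hypotheses to conclusions, namely that C-sequentiality alone yields equicontinuity whereas equitightness genuinely needs the extra identity $\gamma=\gamma_s$, which is exactly why the Schur assumption appears in part (b) but is absent from part (a).
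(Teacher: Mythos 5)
Your proposal is correct and follows essentially the same route as the paper: it fixes $\gamma(\|\cdot\|_{X'},\mu(X',X))=\mu(X',X)$ via \prettyref{ex:mixed_top} (c), derives C-sequentiality from the SWCG hypothesis (\prettyref{rem:mixed_equicont} (c)) respectively from the Mackey--Mazur property (\prettyref{rem:mackey_mazur} (c)), and adds $\gamma=\gamma_s$ from the Schur property (\prettyref{ex:mixed=submixed} (c)) to obtain equitightness, together with the fact that separable Schur spaces are SWCG. The only cosmetic difference is that in the SWCG cases the paper invokes \prettyref{prop:mixed_equicont_main} (whose independent proof uses metrisability on $B_{\|\cdot\|_{X'}}$ directly) rather than passing through C-sequentiality and \prettyref{thm:mixed_equicont_main}, but as you note these are interchangeable here since metrisability on the unit ball implies C-sequentiality by \prettyref{prop:mixed_topology_sequential} (i).
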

\begin{proof}
Due to \prettyref{ex:mixed_top} (c) we have $\gamma(\|\cdot\|_{X'},\mu(X',X))=\mu(X',X)$. 
Part (a) follows from \prettyref{prop:mixed_equicont_main} (a) combined with \prettyref{rem:mixed_equicont} (c) 
if $X$ is an SWCG space. If $X$ is a weakly sequentially complete space with 
an almost shrinking basis, then \prettyref{thm:mixed_equicont_main} (a) in combination 
with \prettyref{rem:mackey_mazur} (c) yields part (a). 

Part (b) is a consequence of \prettyref{prop:mixed_equicont_main} (b) combined with 
\prettyref{rem:mixed_equicont} (c) and \prettyref{ex:mixed=submixed} (c) if $X$ is an SWCG Schur space, 
and the fact that a separable Schur space is an SWCG space by \cite[2.3 Examples (b), p.~389]{schluechtermann1988}.
\end{proof}

Clearly, a Schur space with an almost shrinking basis is already separable. 
Necessary and sufficient conditions such that the dual semigroup of a $\|\cdot\|$-strongly continuous semigroup 
is $\mu(X',X)$-bi-continuous are given in \cite[Propositions 3.19, 3.20, 3.24, p.~78-83]{kuehnemund2001}. 
Due to \cite[Corollary 3.10 (1), p.~553]{kunze2009} such a $\mu(X',X)$-bi-continuous dual semigroup 
is always quasi-$\mu(X',X)$-equicontinuous since it is $\mu(X',X)$-strongly continuous and 
\[
\mathcal{L}(X)=\mathcal{L}(X,\mu(X,X'))=\mathcal{L}(X,\sigma(X,X'))
\] 
where the first equality follows from $(X,\|\cdot\|)$ being a Mackey space, the second from 
\cite[\S 21.4 (6), p.~262]{koethe1969} and 
$\mathcal{L}(X,\tau)$ denotes the space of $\tau$-continuous linear operators from $X$ to $X$. 
However, in return for more restrictions on $X$ \prettyref{thm:dual_mackey_topo_quasi_equicont} 
gives a stronger implication in part (b) and holds for all $\mu(X',X)$-bi-continuous semigroups, 
not only for the ones that are dual semigroups of $\|\cdot\|$-strongly continuous semigroups 
(even though that are the examples we consider). 

\begin{exa}
\begin{enumerate}
\item[(a)] Let $(S(t))_{t\geq 0}$ be the left translation semigroup on $L^{1}(\R):=L^{1}(\R,\lambda)$ given by 
\[
S(t)f(x):=f(x+t),\quad x\in\R,\,f\in L^{1}(\R),\,t\geq 0,
\]
which is $\|\cdot\|_{L^{1}}$-strongly continuous by \cite[Chap.~I, 5.4 Example, p.~39]{engel_nagel2000}.
Its dual semigroup $(T(t))_{t\geq 0}:=(S'(t))_{t\geq 0}$ is the right translation semigroup
on $L^{\infty}(\R)=L^{1}(\R)'$, which fulfils
\[
T(t)f(x)=f(x-t),\quad x\in\R,\,f\in L^{\infty}(\R),\,t\geq 0,
\]
and is a $\mu(L^{\infty},L^{1})$-bi-continuous semigroup by \cite[Examples 3.22, p.~82]{kuehnemund2003}. 
$L^{1}(\R)$ is an SWCG space by \cite[2.3 Examples (d), p.~390]{schluechtermann1988} 
as the Lebesgue measure $\lambda$ is $\sigma$-finite. 
Hence $(T(t))_{t\geq 0}$ is quasi-$\mu(L^{\infty},L^{1})$-equicontinuous 
by \prettyref{thm:dual_mackey_topo_quasi_equicont} (a).
Moreover, $(T(t))_{t\geq 0}$ is quasi-$(\|\cdot\|_{L^{\infty}},\sigma(L^{\infty},L^{1}))$-equitight 
and quasi-$\tau_{\operatorname{c}}(L^{\infty},L^{1})$-equicontinuous by \prettyref{thm:dual_weak_topo_quasi_equicont}. 
\item[(b)] Let $(S(t))_{t\geq 0}$ be the multiplication semigroup on $L^{1}(\R)$ given by 
\[
S(t)f(x):=e^{iq(x)t}f(x),\quad x\in\R,\,f\in L^{1}(\R),\,t\geq 0,
\]
for some measurable and locally integrable function $q\colon\R\to\R$, which is $\|\cdot\|_{L^{1}}$-strongly 
continuous by \cite[Chap.~I, 4.11 Proposition, p.~32]{engel_nagel2000}.
Then the dual semigroup $(T(t))_{t\geq 0}:=(S'(t))_{t\geq 0}$ is a 
$\mu(L^{\infty},L^{1})$-bi-continuous semigroup on $L^{\infty}(\R)$ by \cite[Remark 3.2, p.~83]{kuehnemund2003} 
and quasi-$\mu(L^{\infty},L^{1})$-equicontinuous by \prettyref{thm:dual_mackey_topo_quasi_equicont} (a).
Moreover, $(T(t))_{t\geq 0}$ is quasi-$(\|\cdot\|_{L^{\infty}},\sigma(L^{\infty},L^{1}))$-equitight 
and quasi-$\tau_{\operatorname{c}}(L^{\infty},L^{1})$-equicontinuous by \prettyref{thm:dual_weak_topo_quasi_equicont}. 
\item[(c)] Let $(S(t))_{t\geq 0}$ be the multiplication semigroup on $\ell^{1}=\ell^{1}(\N)$ given by 
\[
S(t)f_{n}:=e^{q(n)t}f_{n},\quad n\in\N,\,f\in\ell^{1},\,t\geq 0,
\]
for some function $q\colon\N\to\C$ with $\sup_{n\in\N}\re q(n)<\infty$, 
which is $\|\cdot\|_{\ell^{1}}$-strongly continuous by 
\cite[Chap.~I, 4.11 Proposition, p.~32]{engel_nagel2000}. Next, we verify that the dual semigroup 
$(T(t))_{t\geq 0}:=(S'(t))_{t\geq 0}$ is a $\mu(\ell^{\infty},\ell^{1})$-bi-continuous semigroup 
on $\ell^{\infty}=\ell^{\infty}(\N)$, using \cite[Propositions 3.19, 3.20, p.~78-81]{kuehnemund2001}. 
We proceed like in \cite[Example 3.22, p.~82]{kuehnemund2001}.
By \cite[Theorem III.2.15, p.~76]{diestel1977} a set $M\subset\ell^{1}$ is $\sigma(\ell^{1},\ell^{\infty})$-compact 
if and only if $M$ is $\|\cdot\|_{\ell^{1}}$-bounded and uniformly absolutely summable, i.e.
\[
\forall\;\varepsilon>0\;\exists\;\delta>0\;\forall\;\Omega\subset\N,\,|\Omega|<\delta,\,f\in M:\;
\sum_{n\in\Omega}|f_{n}|<\varepsilon,
\]
where $|\Omega|$ denotes the cardinality of $\Omega$.
Let $M\subset\ell^{1}$ be $\sigma(\ell^{1},\ell^{\infty})$-compact. Then we have with 
$C:=\sup_{n\in\N}\re q(n)$ that
\[
 \|S(t)f\|_{\ell^{1}}
=\sum_{n=1}^{\infty}e^{t\re q(n)}|f_{n}|
\leq e^{tC}\|f\|_{\ell^{1}}
\leq e^{C}\sup_{g\in M}\|g\|_{\ell^{1}}
<\infty 
\] 
for all $f\in M$ and $0\leq t\leq 1$, which implies that 
\[
M_{1}:=\bigcup_{0\leq t\leq 1}S(t)M
\]
is $\|\cdot\|_{\ell^{1}}$-bounded. Next, we need to show that $M_{1}$ is uniformly absolutely summable. 
Let $\varepsilon>0$. Since $M$ is uniformly absolutely summable, there is $\delta>0$ such that 
for all $\Omega\subset\N$ with $|\Omega|<\delta$, all $f\in M$ and all $0\leq t\leq 1$ it holds 
\[
 \sum_{n\in\Omega}|S(t)f_{n}|
=\sum_{n\in\Omega}e^{t\re q(n)}|f_{n}|
\leq e^{C}\sum_{n\in\Omega}|f_{n}|
<e^{C}\frac{\varepsilon}{e^{C}}
=\varepsilon,
\]
meaning that $M_{1}$ is uniformly absolutely summable. 
Hence $M_{1}$ is $\sigma(\ell^{1},\ell^{\infty})$-compact and it follows 
from \cite[Propositions 3.19, 3.20, p.~78-81]{kuehnemund2001} that 
$(T(t))_{t\geq 0}$ is a $\mu(\ell^{\infty},\ell^{1})$-bi-continuous semigroup 
on $\ell^{\infty}$. The space $\ell^{1}$ is a separable Schur space.
Thus $(T(t))_{t\geq 0}$ is quasi-$\mu(\ell^{\infty},\ell^{1})$-equicontinuous and 
quasi-$(\|\cdot\|_{\ell^{\infty}},\mu(\ell^{\infty},\ell^{1}))$-equitight 
due to \prettyref{thm:dual_mackey_topo_quasi_equicont} (b). Furthermore, $(T(t))_{t\geq 0}$ 
is quasi-$(\|\cdot\|_{\ell^{\infty}},\sigma(\ell^{\infty},\ell^{1}))$-equitight and 
quasi-$\tau_{\operatorname{c}}(\ell^{\infty},\ell^{1})$-equicontinuous 
by \prettyref{thm:dual_weak_topo_quasi_equicont}. 
\end{enumerate}
\end{exa}

Next, we consider implemented semigroups \cite[Sect.~3.2]{alber2001,bratelli1987}. 
If $X$ and $Y$ are Banach spaces, then $(\mathcal{L}(X;Y),\|\cdot\|_{\mathcal{L}(X;Y)},\tau_{\operatorname{wot}})$ and 
$(\mathcal{L}(X;Y),\|\cdot\|_{\mathcal{L}(X;Y)},\tau_{\operatorname{sot}})$ are Saks spaces by 
\prettyref{ex:mixed=submixed} (d). Thus \prettyref{ass:standard} (i) and (iii) are fulfilled in both cases. 
First, by \cite[p.~75]{kuehnemund2001} $\tau_{\operatorname{sot}}$ is sequentially complete on 
$\|\cdot\|_{\mathcal{L}(X;Y)}$-bounded sets. Hence \prettyref{ass:standard} (ii) is fulfilled 
for $\tau_{\operatorname{sot}}$. Second, 
$(\mathcal{L}(X;Y),\gamma(\|\cdot\|_{\mathcal{L}(X;Y)},\tau_{\operatorname{wot}}))$ is complete 
by \cite[I.1.14 Proposition, p.~11]{cooper1978} if $Y$ is reflexive as the unit ball 
$B_{\|\cdot\|_{\mathcal{L}(X;Y)}}$ is $\tau_{\operatorname{wot}}$-compact by \prettyref{ex:mixed=submixed} (d) 
under this condition. Since $(\mathcal{L}(X;Y),\|\cdot\|_{\mathcal{L}(X;Y)},\tau_{\operatorname{wot}})$
is a Saks space, the completeness of 
$(\mathcal{L}(X;Y),\gamma(\|\cdot\|_{\mathcal{L}(X;Y)},\tau_{\operatorname{wot}}))$ yields that
\prettyref{ass:standard} (ii) is fulfilled for $\tau_{\operatorname{wot}}$ 
by \prettyref{rem:mixed_top} (b) if $Y$ is reflexive.

\begin{thm}\label{thm:sop_wop_quasi_equicont}
Let $(X,\|\cdot\|_{X})$, $(Y,\|\cdot\|_{Y})$ be Banach spaces, $(T(t))_{t\geq 0}$ a 
$\|\cdot\|_{Y}$-strongly continuous semigroup on $Y$ and $(S(t))_{t\geq 0}$ a 
$\|\cdot\|_{X}$-strongly continuous semigroup on $X$. We consider the implemented semigroup
$(\mathcal{U}(t))_{t\geq 0}$ on $\mathcal{L}(X;Y)$ given by  
\[
\mathcal{U}(t)R:=T(t)RS(t),\quad t\geq 0,\,R\in\mathcal{L}(X;Y).
\]
\begin{enumerate}
\item[(a)] If $X$ is separable, then $(\mathcal{U}(t))_{t\geq 0}$ is
quasi-$\gamma(\|\cdot\|_{\mathcal{L}(X;Y)},\tau_{\operatorname{sot}})$-equicontinuous. 
\item[(b)] If $X$ is separable and $Y$ finite-dimensional, then $(\mathcal{U}(t))_{t\geq 0}$ is
quasi-$(\|\cdot\|_{\mathcal{L}(X;Y)},\tau_{\operatorname{sot}})$-equitight. 
\item[(c)] If $X$ and $Y$ are separable, $Y$ is reflexive and $(\mathcal{U}(t))_{t\geq 0}$ is locally
$\tau_{\operatorname{wot}}$-bi-equiconti\-nuous, then $(\mathcal{U}(t))_{t\geq 0}$ is  
quasi-$\gamma(\|\cdot\|_{\mathcal{L}(X;Y)},\tau_{\operatorname{wot}})$-equicontinuous and 
quasi-$(\|\cdot\|_{\mathcal{L}(X;Y)},\tau_{\operatorname{wot}})$-equitight. 
\end{enumerate}
\end{thm}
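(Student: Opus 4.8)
The plan is to reduce all three parts to one mechanism: show that $(\mathcal{U}(t))_{t\geq 0}$ is a bi-continuous semigroup for the relevant topology ($\tau_{\operatorname{sot}}$ for (a),(b) and $\tau_{\operatorname{wot}}$ for (c)), and then feed this into \prettyref{prop:mixed_equicont_main}, whose metrisability hypothesis is supplied by \prettyref{rem:mixed_equicont} (d) and whose coincidence $\gamma=\gamma_{s}$ is supplied by \prettyref{ex:mixed=submixed} (d). The algebraic and growth properties of \prettyref{defn:bi_continuous} are immediate: the law $\mathcal{U}(t+s)=\mathcal{U}(t)\mathcal{U}(s)$ follows from $T(t+s)RS(t+s)=T(t)(T(s)RS(s))S(t)$ since $T,S$ are semigroups, $\mathcal{U}(0)=\id$ is clear, and submultiplicativity of the operator norm with the exponential bounds of $T$ and $S$ yields $\|\mathcal{U}(t)R\|_{\mathcal{L}(X;Y)}\leq \|T(t)\|_{\mathcal{L}(Y)}\,\|R\|_{\mathcal{L}(X;Y)}\,\|S(t)\|_{\mathcal{L}(X)}\leq Me^{\omega t}\|R\|_{\mathcal{L}(X;Y)}$, i.e.\ exponential boundedness.

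The substantive work lies in verifying \prettyref{defn:bi_continuous} (ii) and (iv) for $\tau_{\operatorname{sot}}$ in parts (a) and (b). For $\tau_{\operatorname{sot}}$-strong continuity I would fix $x\in X$ and split $\mathcal{U}(t)Rx-\mathcal{U}(t_{0})Rx=T(t)R(S(t)x-S(t_{0})x)+(T(t)-T(t_{0}))RS(t_{0})x$; the first summand is controlled by the local norm-boundedness of $T$ and the strong continuity of $S$, the second by the strong continuity of $T$ applied to the fixed vector $RS(t_{0})x\in Y$. For local bi-equicontinuity, let $(R_{n})_{n\in\N}$ be $\|\cdot\|_{\mathcal{L}(X;Y)}$-bounded with $\tau_{\operatorname{sot}}$-limit $R$, set $Q_{n}:=R_{n}-R$, and fix $x\in X$ and $t_{0}\geq 0$. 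The key observation is that $K_{x}:=\{S(t)x\;|\;t\in[0,t_{0}]\}$ is a norm-compact subset of $X$ (the continuous image of a compact interval), and that a uniformly bounded sequence $(Q_{n})_{n\in\N}$ with $\tau_{\operatorname{sot}}$-limit $0$ converges uniformly on compact sets; a finite-$\varepsilon$-net argument on $K_{x}$ then gives $\sup_{t\in[0,t_{0}]}\|Q_{n}S(t)x\|_{Y}\to 0$, whence $\sup_{t\in[0,t_{0}]}\|\mathcal{U}(t)Q_{n}x\|_{Y}\leq \bigl(\sup_{t\in[0,t_{0}]}\|T(t)\|_{\mathcal{L}(Y)}\bigr)\sup_{t\in[0,t_{0}]}\|Q_{n}S(t)x\|_{Y}\to 0$. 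This is precisely local bi-equicontinuity, so $(\mathcal{U}(t))_{t\geq 0}$ is $\tau_{\operatorname{sot}}$-bi-continuous (the underlying triple satisfies \prettyref{ass:standard} for $\tau_{\operatorname{sot}}$ by the discussion preceding the theorem).

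With $\tau_{\operatorname{sot}}$-bi-continuity established, part (a) follows from \prettyref{prop:mixed_equicont_main} (a), as $X$ separable makes $\tau_{\operatorname{sot}}$ metrisable on $B_{\|\cdot\|_{\mathcal{L}(X;Y)}}$ by \prettyref{rem:mixed_equicont} (d); part (b) adds $Y$ finite-dimensional, which yields $\gamma=\gamma_{s}$ for $\tau_{\operatorname{sot}}$ by \prettyref{ex:mixed=submixed} (d), so \prettyref{prop:mixed_equicont_main} (b) applies. For part (c) I would run the same reduction with $\tau_{\operatorname{wot}}$: the triple satisfies \prettyref{ass:standard} for $\tau_{\operatorname{wot}}$ when $Y$ is reflexive (as noted before the theorem), $\tau_{\operatorname{wot}}$-strong continuity is inherited from the $\tau_{\operatorname{sot}}$-version since $\tau_{\operatorname{wot}}\leq\tau_{\operatorname{sot}}$, and local $\tau_{\operatorname{wot}}$-bi-equicontinuity is granted by hypothesis; thus $(\mathcal{U}(t))_{t\geq 0}$ is $\tau_{\operatorname{wot}}$-bi-continuous. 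Since $Y$ separable and reflexive forces $Y''=Y$ separable and hence $Y'$ separable, \prettyref{rem:mixed_equicont} (d) gives metrisability of $\tau_{\operatorname{wot}}$ on the ball, and $Y$ reflexive gives $\gamma=\gamma_{s}$ by \prettyref{ex:mixed=submixed} (d), so both conclusions of (c) follow from \prettyref{prop:mixed_equicont_main} (a) and (b).

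The main obstacle I anticipate is the local bi-equicontinuity in the $\tau_{\operatorname{sot}}$ case: one must pass the operator-sequence limit inside the $t$-dependent argument $S(t)x$, which works only because the orbit $K_{x}$ is compact and because $\|\cdot\|$-bounded, $\tau_{\operatorname{sot}}$-null operator sequences converge uniformly on compacta. For $\tau_{\operatorname{wot}}$ this compactness device is unavailable---both the argument $S(t)x$ and the relevant functional $T'(t)y'$ would vary with $t$ while only weak convergence of $Q_{n}$ is at hand---which is exactly why part (c) postulates local $\tau_{\operatorname{wot}}$-bi-equicontinuity rather than deriving it.
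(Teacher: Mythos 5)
Your proposal is correct and follows essentially the same route as the paper: both reduce the theorem to \prettyref{prop:mixed_equicont_main} via the $\tau_{\operatorname{sot}}$- (resp.\ $\tau_{\operatorname{wot}}$-) bi-continuity of $(\mathcal{U}(t))_{t\geq 0}$, using \prettyref{rem:mixed_equicont} (d) for metrisability of the topology on the unit ball and \prettyref{ex:mixed=submixed} (d) for $\gamma=\gamma_{s}$, including the observation that $Y$ separable and reflexive forces $Y'$ separable in part (c) and that local $\tau_{\operatorname{wot}}$-bi-equicontinuity must be assumed rather than derived. The only difference is that where the paper simply cites \cite[Proposition 3.16, p.~75]{kuehnemund2001} for the $\tau_{\operatorname{sot}}$-bi-continuity of the implemented semigroup, you prove it directly (correctly, via the norm-compact orbit $K_{x}=\{S(t)x\;|\;t\in[0,t_{0}]\}$ and the finite-net argument for uniformly bounded $\tau_{\operatorname{sot}}$-null operator sequences), which makes the write-up self-contained but is not a different method.
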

\begin{proof}
By \cite[Proposition 3.16, p.~75]{kuehnemund2001} $(\mathcal{U}(t))_{t\geq 0}$ is a 
$\tau_{\operatorname{sot}}$-bi-continuous semigroup on $\mathcal{L}(X;Y)$. 
Since $\tau_{\operatorname{wot}}\leq \tau_{\operatorname{sot}}$, it is also
$\tau_{\operatorname{wot}}$-strongly continuous and under the assumption of local
$\tau_{\operatorname{wot}}$-bi-equicontinuity in (b) $\tau_{\operatorname{wot}}$-bi-continuous as well. 

Parts (a) and (b) follow from \prettyref{prop:mixed_equicont_main} combined with \prettyref{rem:mixed_equicont} (d) and
and \prettyref{ex:mixed=submixed} (d).
Let us turn to part (c). If $Y$ is reflexive, then the separability of $Y$ implies the separability of $Y'$ 
by \cite[Propositions 6.13, 7.3, p.~49, 53]{meisevogt1997}. Hence we deduce part (c) 
from \prettyref{prop:mixed_equicont_main} (b) combined with \prettyref{rem:mixed_equicont} (d) 
and \prettyref{ex:mixed=submixed} (d). 
\end{proof}

\begin{exa}
Let $H$ be a separable Hilbert space and $\mathcal{K}(H)$ the space of compact operators in 
$\mathcal{L}(H)$. We denote by $\beta$ the Hausdorff locally convex topology 
on $\mathcal{L}(H)$ induced by the directed system of seminorms 
\[
\widetilde{p}_{M}(R):=\max\bigl(\sup_{Q\in M}\|RQ\|_{\mathcal{L}(H)},\sup_{Q\in M}\|QR\|_{\mathcal{L}(H)}\bigr),
\quad R\in \mathcal{L}(H),
\]
for finite $M\subset\mathcal{K}(H)$. Due to \cite[Theorem 3.9, p.~84]{busby1968} and 
\cite[Corollary 2.8, p.~638]{taylor1970} we have $\beta=\mu(\mathcal{L}(H),\mathcal{N}(H))$ where 
$\mathcal{N}(H)$ is the space of trace class operators in $\mathcal{L}(H)=\mathcal{N}(H)'$ 
(cf.~\cite[p.~182]{kraaij2016}). 

Further, we denote by $\tau_{\operatorname{sot}^{\ast}}$ the symmetric
strong operator topology, i.e.~the Hausdorff locally convex topology on $\mathcal{L}(H)$ 
induced by the directed system of seminorms
\[
p_{N}(R):=\max\bigl(\sup_{x\in N}\|Rx\|_{H},\sup_{x\in N}\|R^{\ast}x\|_{H}\bigr),\quad R\in \mathcal{L}(H),
\]
for finite $N\subset H$, and by $\beta_{\operatorname{sot}^{\ast}}$ the mixed topology 
$\gamma(\|\cdot\|_{\mathcal{L}(H)},\tau_{\operatorname{sot}^{\ast}})$ (see \cite[p.~204]{cooper1978} 
where $\beta_{\operatorname{sot}^{\ast}}$ is called $\beta_{\operatorname{s}^{\ast}}$). 
The triple $(\mathcal{L}(H),\|\cdot\|_{\mathcal{L}(H)},\tau_{\operatorname{sot}^{\ast}})$ is a Saks space 
and $(\mathcal{L}(H),\beta_{\operatorname{sot}^{\ast}})$ is complete by \cite[p.~204]{cooper1978}. 
Thus this triple fulfils \prettyref{ass:standard} by \prettyref{rem:mixed_top}. 
Due to \cite[IV.2.11 Proposition, p.~211]{cooper1978} 
we have $\beta_{\operatorname{sot}^{\ast}}=\mu(\mathcal{L}(H),\mathcal{N}(H))$ as well and hence 
\begin{equation}\label{eq:implemented_mackey}
\beta_{\operatorname{sot}^{\ast}}=\mu(\mathcal{L}(H),\mathcal{N}(H))=\beta .
\end{equation}
Since $\mathcal{N}(H)$ is an SWCG space by \cite[2.3 Examples (c), p.~389-390]{schluechtermann1988}, 
we get that $(\mathcal{L}(H),\beta_{\operatorname{sot}^{\ast}})=(\mathcal{L}(H),\beta)$ is a 
C-sequential space by \prettyref{rem:mixed_equicont} (c), which gives a different proof 
of the $\beta^{+}=\beta$ statement in \cite[Proposition 8.5, p.~182]{kraaij2016}.

Let $(T(t))_{t\geq 0}$ be a $\|\cdot\|_{H}$-strongly continuous semigroup on $H$. 
One checks as in the case of $\tau_{\operatorname{sot}}$ (see the proof of 
\cite[Proposition 3.16, p.~75-76]{kuehnemund2001}) that
the implemented semigroup $(\mathcal{U}(t))_{t\geq 0}$ on $\mathcal{L}(H)$ given by  
\begin{equation}\label{eq:implemented_semigroup_hilbert}
\mathcal{U}(t)R:=T^{\ast}(t)RT(t),\quad t\geq 0,\,R\in \mathcal{L}(H),
\end{equation}
is $\tau_{\operatorname{sot}^{\ast}}$-bi-continuous, where $T^{\ast}(t)$ is the adjoint of $T(t)$. 
Since $\beta_{\operatorname{sot}^{\ast}}$ and $\tau_{\operatorname{sot}^{\ast}}$ coincide on 
$\|\cdot\|_{\mathcal{L}(H)}$-bounded sets and 
$(\mathcal{L}(H),\|\cdot\|_{\mathcal{L}(H)},\beta_{\operatorname{sot}^{\ast}})$ 
fulfils \prettyref{ass:standard} by \prettyref{defn:mixed_top_Saks} (a), \prettyref{rem:mixed_top} (b) and 
\cite[Lemma 5.5 (a), p.~2680]{kruse_meichnser_seifert2018}, 
the semigroup $(\mathcal{U}(t))_{t\geq 0}$ is $\beta_{\operatorname{sot}^{\ast}}$-bi-continuous as well. 
Therefore $(\mathcal{U}(t))_{t\geq 0}$ is quasi-$\beta_{\operatorname{sot}^{\ast}}$-equicontinuous 
by \prettyref{thm:dual_mackey_topo_quasi_equicont} (a) because
$\beta_{\operatorname{sot}^{\ast}}=\mu(\mathcal{L}(H),\mathcal{N}(H))$ and $\mathcal{N}(H)$ is 
an SWCG space. 
In addition, we have that $(\mathcal{U}(t))_{t\geq 0}$ is 
quasi-$\gamma(\|\cdot\|_{\mathcal{L}(H)},\tau_{\operatorname{sot}})$-equicontinuous by 
\prettyref{thm:sop_wop_quasi_equicont} (a) as $H$ is separable.
\end{exa}

The preceding example gives an independent proof of \cite[Proposition 8.6, p.~182]{kraaij2016}, 
where it is shown that $(\mathcal{U}(t))_{t\geq 0}$ is an SCLE semigroup w.r.t.~$\beta$, 
due to \eqref{eq:implemented_mackey}. 
The implemented semigroup \eqref{eq:implemented_semigroup_hilbert} is a special case of a so-called 
\emph{quantum dynamical group} by \cite[Example 3.1, p.~30]{fagnola1993}, 
which are defined to be strongly continuous w.r.t.~the ultraweak topology (also called $\sigma$-weak topology).

\bibliography{biblio_tightness}
\bibliographystyle{plainnat}
\end{document}